\newtheorem{theorem}{Theorem}[section]
\newtheorem{proposition}[theorem]{Proposition}
\newtheorem{corollary}[theorem]{Corollary}
\newtheorem{lemma}[theorem]{Lemma}
\theoremstyle{definition}
\newtheorem{definition}[theorem]{Definition}
\newtheorem{examples}[theorem]{Examples}
\newtheorem{example}[theorem]{Example}
\numberwithin{equation}{section}
\newenvironment{red}
{\relax\color{red}}
{\hspace*{.5ex}\relax}
\newcommand{\ber}{\begin{red}}
\newcommand{\er}{\end{red}}
\newcommand{\Z}{\mathbb{Z}}
\newcommand{\Q}{\mathbb{Q}}
\newcommand{\C}{\mathbb{C}}
\newcommand{\g}{\mathfrak{g}}
\newcommand{\B}{\mathbb{B}}
\newcommand{\h} {\mathfrak{h}}
\newcommand{\wt}{\mathsf{wt}}
\newcommand{\fmod}{\text{-}\mathsf{fmod}}
\newcommand{\im}{{\mathsf {im}}}
\newcommand{\ind}{\mathsf {Ind}}
\newcommand{\coind}{{\mathsf {Coind}}}
\newcommand{\res}{{\mathsf{Res}}}
\newcommand{\soc}{\mathsf{ soc}}
\newcommand{\hd}{\mathsf{ hd}}
\newcommand{\Hom}{\mathsf{ Hom}}
\newcommand{\HOM}{\mathsf{ HOM}}
\newcommand{\fMod}{ \mathsf{fmod}}
\newcommand{\ch}{\mathsf{ ch}}
\newcommand{\qch}{\mathsf{ch}_{q}}
\newcommand{\dm}{\mathsf{dim}}
\newcommand{\qdim}{\mathsf{dim}_q }
\newcommand{\infl}{ \mathsf{ infl} }
\newcommand{\pr}{ \mathsf{ pr} }
\newcommand{\ep} { \varepsilon}
\newcommand{\ke} { \tilde{e}}
\newcommand{\kf} { \tilde{f}}
\newcommand{\id} { \mathsf{id}}
\newcommand{\e} {e}
\newcommand{\ad} {\mathsf{a}}
\newcommand{\cB} {\mathbf{B}}
\newcommand{\dbt}[2]{\overset{#2}{\underset{#1}{\boxtimes}} }
\newcommand{\fourthree}{\buildrel { (4.3)} \over  \Rightarrow}
\newcommand{\fourfour}{\buildrel { (4.4)} \over  \Rightarrow}
\newcommand{\ftf}{\buildrel {\hspace{-.3truecm} (4.2),(4.4)} \over  \Longrightarrow}
\newcommand{\ftfi}{\buildrel {\hspace{-.3truecm}(4.2),(4.5)} \over  \Longrightarrow}
\newcommand{\ftthf}{\buildrel { \hspace{-.5truecm}(4.2),(4.3),(4.4)} \over  \Longrightarrow}
\newcommand{\DynkinA}
{
\xy
(0.5,0)*{};(7.5,0)*{} **\dir{-};
(8.5,0)*{};(12.5,0)*{} **\dir{-};
(12.5,0)*{};(24.5,0)*{} **\dir{.};
(24.5,0)*{};(28.5,0)*{} **\dir{-};
(29.5,0)*{}; (36.5,0)*{} **\dir{-};
(0,0)*{\circ}; (8,0)*{\circ};
(29,0)*{\circ}; (37,0)*{\circ};
(0,-3)*{1}; (8,-3)*{2};
(29,-3)*{n-1};(37,-3)*{n};
\endxy
}
\newcommand{\DynkinB}
{
\xy
(0.5,0)*{};(7.5,0)*{} **\dir{-};
(8.5,0)*{};(12.5,0)*{} **\dir{-};
(12.5,0)*{};(24.5,0)*{} **\dir{.};
(24.5,0)*{};(28.5,0)*{} **\dir{-};
{\ar@{=>} (29.5,0)*{}; (36.5,0)*{}};
(0,0)*{\circ}; (8,0)*{\circ};
(29,0)*{\circ}; (37,0)*{\circ};
(0,-3)*{1}; (8,-3)*{2};
(29,-3)*{n-1};(37,-3)*{n};
\endxy
}
\newcommand{\DynkinC}
{
\xy
(0.5,0)*{};(7.5,0)*{} **\dir{-};
(8.5,0)*{};(12.5,0)*{} **\dir{-};
(12.5,0)*{};(24.5,0)*{} **\dir{.};
(24.5,0)*{};(28.5,0)*{} **\dir{-};
{\ar@{<=} (29.5,0)*{}; (36.5,0)*{}};
(0,0)*{\circ}; (8,0)*{\circ};
(29,0)*{\circ}; (37,0)*{\circ};
(0,-3)*{1}; (8,-3)*{2};
(29,-3)*{n-1};(37,-3)*{n};
\endxy
}
\newcommand{\DynkinD}
{
\xy
(0.5,0)*{};(7.5,0)*{} **\dir{-};
(8.5,0)*{};(12.5,0)*{} **\dir{-};
(12.5,0)*{};(24.5,0)*{} **\dir{.};
(24.5,0)*{};(28.5,0)*{} **\dir{-};
(29.5,0.5)*{};(33.5,2.5)*{} **\dir{-};
(29.2,-0.3)*{};(33.5,-2.5)*{} **\dir{-};
(0,0)*{\circ}; (8,0)*{\circ};
(29,0)*{\circ}; (34,2.5)*{\circ}; (34,-2.8)*{\circ};
(0,-3)*{1}; (8,-3)*{2};
(26,-3)*{n-2};(35,0.5)*{n}; (35,-5)*{n-1};
\endxy
}
\newcommand{\DynkinEs}
{
\xy
(0.5,0)*{};(7.5,0)*{} **\dir{-};
(8.5,0)*{};(15.5,0)*{} **\dir{-};
(16.5,0)*{};(23.5,0)*{} **\dir{-};
(24.5,0)*{};(31.5,0)*{} **\dir{-};
(16,0.5)*{};(16,5.5)*{} **\dir{-};
(0,0)*{\circ}; (8,0)*{\circ}; (16,0)*{\circ};
(24,0)*{\circ}; (32,0)*{\circ};
(16,6)*{\circ};
(0,-3)*{1}; (8,-3)*{2}; (16,-3)*{3};
(24,-3)*{5}; (32,-3)*{6};
(14,5)*{4};
\endxy
}
\newcommand{\DynkinEsv}
{
\xy
(0.5,0)*{};(7.5,0)*{} **\dir{-};
(8.5,0)*{};(15.5,0)*{} **\dir{-};
(16.5,0)*{};(23.5,0)*{} **\dir{-};
(24.5,0)*{};(31.5,0)*{} **\dir{-};
(32.5,0)*{};(39.5,0)*{} **\dir{-};
(16,0.5)*{};(16,5.5)*{} **\dir{-};
(0,0)*{\circ}; (8,0)*{\circ}; (16,0)*{\circ};
(24,0)*{\circ}; (32,0)*{\circ};
(40,0)*{\circ};
(16,6)*{\circ};
(0,-3)*{1}; (8,-3)*{2}; (16,-3)*{3};
(24,-3)*{5}; (32,-3)*{6}; (40,-3)*{7};
(14,5)*{4};
\endxy
}
\newcommand{\DynkinEe}
{
\xy
(0.5,0)*{};(7.5,0)*{} **\dir{-};
(8.5,0)*{};(15.5,0)*{} **\dir{-};
(16.5,0)*{};(23.5,0)*{} **\dir{-};
(24.5,0)*{};(31.5,0)*{} **\dir{-};
(32.5,0)*{};(39.5,0)*{} **\dir{-};
(40.5,0)*{};(47.5,0)*{} **\dir{-};
(16,0.5)*{};(16,5.5)*{} **\dir{-};
(0,0)*{\circ}; (8,0)*{\circ}; (16,0)*{\circ};
(24,0)*{\circ}; (32,0)*{\circ};
(40,0)*{\circ}; (48,0)*{\circ};
(16,6)*{\circ};
(0,-3)*{1}; (8,-3)*{2}; (16,-3)*{3};
(24,-3)*{5}; (32,-3)*{6}; (40,-3)*{7};
(48,-3)*{8}; (14,5)*{4};
\endxy
}
\newcommand{\DynkinF}
{
\xy
(0.5,0)*{};(7.5,0)*{} **\dir{-};
{\ar@{=>} (8.5,0)*{}; (15.5,0)*{}};
(16.5,0)*{};(23.5,0)*{} **\dir{-};
(0,0)*{\circ}; (8,0)*{\circ};
(16,0)*{\circ}; (24,0)*{\circ};
(0,-3)*{1}; (8,-3)*{2};
(16,-3)*{3};(24,-3)*{4};
\endxy
}
\newcommand{\DynkinG}
{
\xy
(0.5,0.5)*{};(7,0.5)*{} **\dir{-};
(0.5,0)*{};(7.5,0)*{} **\dir{-};
(0.5,-0.5)*{};(7,-0.5)*{} **\dir{-};
(6.5,1.5)*{};(7.5,0)*{} **\dir{-};
(6.5,-1.5)*{};(7.5,0)*{} **\dir{-};
(0,0)*{\circ}; (8,0)*{\circ};
(0,-3)*{1}; (8,-3)*{2};
\endxy
}
\newcommand{\exTriangleB}
{
\xy
(0,7.5)*{}; (25,7.5)*{} **\dir{-};
(0,2.5)*{}; (25,2.5)*{} **\dir{-};
(5,-2.5)*{}; (20,-2.5)*{} **\dir{-};
(10,-7.5)*{}; (15,-7.5)*{} **\dir{-};
(0,7.5)*{}; (0,2.5)*{} **\dir{-};
(5,7.5)*{}; (5,-2.5)*{} **\dir{-};
(10,7.5)*{}; (10,-7.5)*{} **\dir{-};
(15,7.5)*{}; (15,-7.5)*{} **\dir{-};
(20,7.5)*{}; (20,-2.5)*{} **\dir{-};
(25,7.5)*{}; (25,2.5)*{} **\dir{-};
(2.5,5)*{9};(7.5,5)*{8};(12.5,5)*{4};(17.5,5)*{2};(22.5,5)*{1};
(7.5,0)*{3};(12.5,0)*{1};(17.5,0)*{0};
(12.5,-5)*{2};
\endxy
}
\newcommand{\exTriangleBB}
{
\xy
(0,7.5)*{}; (25,7.5)*{} **\dir{-};
(0,2.5)*{}; (25,2.5)*{} **\dir{-};
(5,-2.5)*{}; (20,-2.5)*{} **\dir{-};
(10,-7.5)*{}; (15,-7.5)*{} **\dir{-};
(0,7.5)*{}; (0,2.5)*{} **\dir{-};
(5,7.5)*{}; (5,-2.5)*{} **\dir{-};
(10,7.5)*{}; (10,-7.5)*{} **\dir{-};
(15,7.5)*{}; (15,-7.5)*{} **\dir{-};
(20,7.5)*{}; (20,-2.5)*{} **\dir{-};
(25,7.5)*{}; (25,2.5)*{} **\dir{-};
(2.5,5)*{4};(7.5,5)*{3};(12.5,5)*{5};(17.5,5)*{2};(22.5,5)*{1};
(7.5,0)*{3};(12.5,0)*{3};(17.5,0)*{0};
(12.5,-5)*{3};
\endxy
}
\newcommand{\exTriangleBBi}
{
\xy
(0,7.5)*{}; (25,7.5)*{} **\dir{-};
(0,2.5)*{}; (25,2.5)*{} **\dir{-};
(5,-2.5)*{}; (20,-2.5)*{} **\dir{-};
(10,-7.5)*{}; (15,-7.5)*{} **\dir{-};
(0,7.5)*{}; (0,2.5)*{} **\dir{-};
(5,7.5)*{}; (5,-2.5)*{} **\dir{-};
(10,7.5)*{}; (10,-7.5)*{} **\dir{-};
(15,7.5)*{}; (15,-7.5)*{} **\dir{-};
(20,7.5)*{}; (20,-2.5)*{} **\dir{-};
(25,7.5)*{}; (25,2.5)*{} **\dir{-};
(2.5,5)*{5};(7.5,5)*{4};(12.5,5)*{7};(17.5,5)*{2};(22.5,5)*{1};
(7.5,0)*{2};(12.5,0)*{1};(17.5,0)*{0};
(12.5,-5)*{3};
\endxy
}
\newcommand{\exTriangleBBii}
{
\xy
(0,7.5)*{}; (25,7.5)*{} **\dir{-};
(0,2.5)*{}; (25,2.5)*{} **\dir{-};
(5,-2.5)*{}; (20,-2.5)*{} **\dir{-};
(10,-7.5)*{}; (15,-7.5)*{} **\dir{-};
(0,7.5)*{}; (0,2.5)*{} **\dir{-};
(5,7.5)*{}; (5,-2.5)*{} **\dir{-};
(10,7.5)*{}; (10,-7.5)*{} **\dir{-};
(15,7.5)*{}; (15,-7.5)*{} **\dir{-};
(20,7.5)*{}; (20,-2.5)*{} **\dir{-};
(25,7.5)*{}; (25,2.5)*{} **\dir{-};
(2.5,5)*{4};(7.5,5)*{3};(12.5,5)*{5};(17.5,5)*{2};(22.5,5)*{1};
(7.5,0)*{4};(12.5,0)*{3};(17.5,0)*{0};
(12.5,-5)*{3};
\endxy
}
\newcommand{\exTriangleD}
{
\xy
(0,7.5)*{}; (30,7.5)*{} **\dir{-};
(0,2.5)*{}; (30,2.5)*{} **\dir{-};
(5,-2.5)*{}; (25,-2.5)*{} **\dir{-};
(10,-7.5)*{}; (20,-7.5)*{} **\dir{-};
(0,7.5)*{}; (0,2.5)*{} **\dir{-};
(5,7.5)*{}; (5,-2.5)*{} **\dir{-};
(10,7.5)*{}; (10,-7.5)*{} **\dir{-};
(15,7.5)*{}; (15,-7.5)*{} **\dir{-};
(20,7.5)*{}; (20,-7.5)*{} **\dir{-};
(25,7.5)*{}; (25,-2.5)*{} **\dir{-};
(30,7.5)*{}; (30,2.5)*{} **\dir{-};
(2.5,5)*{9};(7.5,5)*{6};(12.5,5)*{4};(17.5,5)*{5};(22.5,5)*{3};(27.5,5)*{2};
(7.5,0)*{7};(12.5,0)*{4};(17.5,0)*{3};(22.5,0)*{1};
(12.5,-5)*{5};(17.5,-5)*{2};
\endxy
}
\newcommand{\Bs}[1] 
{
\xy
(-2.5,2.5)*{}; (2.5,2.5)*{} **\dir{-};
(-2.5,-2.5)*{}; (2.5,-2.5)*{} **\dir{-};
(-2.5,2.5)*{}; (-2.5,-2.5)*{} **\dir{-};
(2.5,2.5)*{}; (2.5,-2.5)*{} **\dir{-};
(0,0)*{#1};
\endxy
}
\newcommand{\Bl}[1] 
{
\xy
(-5,2.5)*{}; (5,2.5)*{} **\dir{-};
(-5,-2.5)*{}; (5,-2.5)*{} **\dir{-};
(-5,2.5)*{}; (-5,-2.5)*{} **\dir{-};
(5,2.5)*{}; (5,-2.5)*{} **\dir{-};
(0,0)*{#1};
\endxy
}
\newcommand{\CrystalA}
{
\xymatrix{
 \Bs{\overline{1}}  &  \Bs{\overline{2}} \ar[l]_{1} & \ar[l]_{2} \cdots  &
\Bs{\overline{n}} \ar[l]_{n-1} & \Bl{\overline{n+1}} \ar[l]_{n}
}
}
\newcommand{\CrystalB}
{

\xymatrix{
 \Bs{\overline{1}}   & \ar[l]_{1}  \cdots  & \ar[l]_{n-1}
\Bs{\overline{n}}  & \ar[l]_n \Bs{0}  & \ar[l]_n
\Bs{n}  & \ar[l]_{n-1} \cdots &  \ar[l]_{1} \Bs{1}
}
}
\newcommand{\CrystalC}
{
\xymatrix{
 \Bs{\overline{1}}   & \ar[l]_1  \cdots  & \ar[l]_{n-2\ \ \ }
\Bl{\overline{n-1}} & \ar[l]_{\ \ \ n-1} \Bs{\overline{n}}  & \ar[l]_n
\Bs{n} & \ar[l]_{n-1\ \ \ } \Bl{n-1} & \ar[l]_{\ \ \ n-2}
 \cdots & \ar[l]_{1} \Bs{1}
}
}
\newcommand{\CrystalD}
{

\xymatrix{
 & & & \ar[dl]_{n} \Bs{n} &  & & \\
\Bs{\overline{1}}   & \ar[l]_{1}  \cdots  & \ar[l]_{n-2 \ \ \ \ }
\Bl{\overline{n-1}}  & & \ar[ul]_{n-1} \ar[dl]^{n} \Bl{n-1} & \ar[l]_{\ \ \ \ n-2} \cdots & \ar[l]_{1} \Bs{1} \\
& & & \ar[ul]^{n-1} \Bs{\overline{n}}  &  & &
}
}
\newcommand{\CrystalDiagram}
{
\xymatrix{
B(\lambda) \ar@{_(->}[dd]^{\imath_\lambda} \ar[rr]^{\sim}_{\Psi^\lambda} & & \B(\lambda) \ar@{_(->}[dd]^{\jmath_\lambda} \\
&  & \\
B(\infty) \otimes T_\lambda \otimes C \ar[rr]^{\sim}_{\Psi \otimes {\rm id} \otimes {\rm id}} & & \B(\infty) \otimes T_\lambda \otimes C
}
}
\newcommand{\KNT}[9]
{
\xy
(0,7.5)*{}; (17.5,7.5)*{} **\dir{-};
(0,2.5)*{}; (17.5,2.5)*{} **\dir{-};
(0,-2.5)*{}; (12.5,-2.5)*{} **\dir{-};
(0,-7.5)*{}; (7.5,-7.5)*{} **\dir{-};
(0,7.5)*{}; (0,-7.5)*{} **\dir{-};
(2.5,7.5)*{}; (2.5,-7.5)*{} **\dir{-};
(7.5,7.5)*{}; (7.5,-7.5)*{} **\dir{-};
(12.5,7.5)*{}; (12.5,-2.5)*{} **\dir{-};
(17.5,7.5)*{}; (17.5, 2.5)*{} **\dir{-};
(15,5)*{#1};
(10,5)*{#2}; (10,0)*{#3};
(5,5)*{#4}; (5,0)*{#5}; (5,-5)*{#6};
(1.4,5)*{#7}; (1.4,0)*{#8}; (1.4,-5)*{#9};
\endxy
}
\newcommand{\CrystalBf}
{
\xymatrix{
 \Bs{\overline{1}}   &
 \ar[l]_{1}  \Bs{\overline{2}}  &
 \ar[l]_{2}  \Bs{\overline{3}}  &
 \ar[l]_{3}  \Bs{0}  &
 \ar[l]_3 \Bs{3}  &
 \ar[l]_2 \Bs{2}  &
 \ar[l]_{1} \Bs{1}
}
}
\newcommand{\CrystalBfi}
{
\xymatrix{
 \Bs{\overline{1}}   &
 \ar[l]_{1}  \Bs{\overline{2}}
}
}
\newcommand{\CrystalBfii}
{
\xymatrix{
 \Bs{\overline{1}}   &
 \ar[l]_{1}  \Bs{\overline{2}}  &
 \ar[l]_{2}  \Bs{\overline{3}}  &
 \ar[l]_{3}  \Bs{0}
}
}
\newcommand{\CrystalBfiii}
{
\xymatrix{
 \Bs{\overline{1}}  &
 \ar[l]_{1}  \Bs{\overline{2}}  &
 \ar[l]_{2}  \Bs{\overline{3}}  &
 \ar[l]_{3}  \Bs{0}  &
 \ar[l]_3 \Bs{3} &
 \ar[l]_2 \Bs{2}
}
}
\newcommand{\CrystalBfiv}
{
\xymatrix{
 \Bs{\overline{1}}  &
 \ar[l]_{1}  \Bs{\overline{2}} &
 \ar[l]_{2}  \Bs{\overline{3}}  &
 \ar[l]_{3}  \Bs{0}  &
 \ar[l]_3 \Bs{3}  &
 \ar[l]_2 \Bs{2}  &
 \ar[l]_{1} \Bs{1}
}
}
\begin{document}

\title[Representations over KLR algebras of finite classical type]
{Construction of irreducible representations \\ over
Khovanov-Lauda-Rouquier algebras \\ of finite classical type  }

\author[Georgia Benkart]{Georgia Benkart}
\address{Department of Mathematics, University of Wisconsin-Madison, 480 Lincoln Drive, Madison, WI 53706, USA}
\email{benkart@math.wisc.edu}
\author[Seok-Jin Kang]{Seok-Jin Kang$^{1,2}$ }
\thanks{$^1$ This work was supported by KRF Grant \# 2007-341-C00001.}
\thanks{$^2$ This work was supported by NRF Grant \# 2010-0010753.}
\address{Department of Mathematical Sciences and Research Institute of Mathematics,
Seoul National University, 599 Gwanak-ro, Gwanak-gu, Seoul 151-747, Korea} \email{sjkang@snu.ac.kr}
\author[Se-jin Oh]{Se-jin Oh$^{3,4}$}
\thanks{$^3$ This work was supported by NRF Grant \# 2010-0019516.}
\thanks{$^4$ This work was supported by BK21 Mathematical Sciences Division.}
\address{Department of Mathematical Sciences, Seoul National University,
599 Gwanak-ro, Gwanak-gu, Seoul 151-747, Korea} \email{sj092@snu.ac.kr}
\author[Euiyong Park]{Euiyong Park}
\address{Research Institute of Mathematics, Seoul National University, 599 Gwanak-ro, Gwanak-gu, Seoul 151-747, Korea}
\email{pwy@snu.ac.kr}

\subjclass[2000]{05E10, 16D60, 17B67, 81R10}
\keywords{adapted strings, crystal bases, Khovanov-Lauda-Rouquier algebras}
\maketitle

\begin{center}
{\it In Memory of Professor Hyo Chul Myung}
\end{center}

\begin{abstract}
We give an explicit construction of irreducible modules over
Khovanov-Lauda-Rouquier algebras $R$ and their cyclotomic quotients
$R^{\lambda}$ for finite classical types using a crystal basis
theoretic approach. More precisely, for each element $v$ of the
crystal $B(\infty)$ (resp.\ $B(\lambda)$), we first construct
certain modules $\Delta(\mathbf{a};k)$ labeled by the adapted string
$\mathbf{a}$ of $v$. We then prove that the head of the induced
module  $\ind \big(\Delta(\mathbf{a};1) \boxtimes \cdots \boxtimes
\Delta(\mathbf{a};n)\big)$ is irreducible and that every irreducible
$R$-module (resp.\ $R^{\lambda}$-module) can be realized as the
irreducible head of one of the induced modules $\ind
(\Delta(\mathbf{a};1) \boxtimes \cdots \boxtimes
\Delta(\mathbf{a};n))$.   Moreover, we show that our construction is
compatible with the crystal structure on $B(\infty)$ (resp.\
$B(\lambda)$).
\end{abstract}

\section*{Introduction}

The {\it Khovanov-Lauda-Rouquier algebras} ({\it KLR algebras}) were
introduced independently by Khovanov-Lauda \cite{KL09,KL11} and
Rouquier \cite{R08} to give a categorification of quantum groups.
Let $U_q(\g)$ be the quantum group associated with a symmetrizable
Cartan datum,  and let $R$ be the corresponding KLR algebra. For a
dominant integral weight $\lambda$ of $U_q(\g)$, the algebra $R$ has
a special quotient $R^{\lambda}$ corresponding to $\lambda$, which
is called the {\it cyclotomic Khovanov-Lauda-Rouquier algebra} ({\it
cyclotomic KLR algebra}) of weight $\lambda$. It was conjectured
that the cyclotomic quotient $R^{\lambda}$ gives a categorification
of the irreducible highest weight $U_q(\g)$-module $V(\lambda)$.
This was shown for type $\mathsf{A}$ in \cite{BK08,BK09,BS08}.
In \cite{KK11}, Kang and Kashiwara proved this conjecture for all
symmetrizable Cartan data.   Webster  \cite{Webster10} has announced
a categorification of tensor products of highest weight modules.  In
a recent paper, Kang, Oh and Park  \cite{KOP11a}
extended the study of KLR algebras  to provide a categorification of
quantum generalized Kac-Moody algebras and their crystals. Moreover,
Kang, Kashiwara and Oh \cite{KKO11} proved the cyclotomic
categorification conjecture for irreducible highest weight modules
over quantum generalized Kac-Moody algebras.

For symmetrizable Cartan data, the set $\B(\infty)$ (resp.\ $\B(\lambda)$) of
isomorphism classes of finite-dimensional irreducible graded modules
over $R$ (resp.\ $R^{\lambda}$) can be given a crystal structure,
and Lauda and Vazirani \cite{LV09} have shown that there exist
crystal isomorphisms $ B(\infty) \buildrel \sim \over
\longrightarrow \B(\infty)$ and $ B(\lambda) \buildrel \sim \over
\longrightarrow \B(\lambda)$, where $B(\infty)$ (resp.\
$B(\lambda)$) is the crystal of $U_q^-(\g)$ (resp.~$V(\lambda)$).
Kleshchev and Ram \cite{KR09}  gave a  construction of
irreducible graded $R$-modules for all finite types by using the
combinatorics of Lyndon words to construct irreducible
$R$-modules as the irreducible heads of the induced modules of the outer
tensor products of {\it cuspidal modules}.  In this approach, the action of Kashiwara operators
on the crystal of irreducible modules  is hidden in the combinatorics of Lyndon words.   Hill,
Melvin and Mondragon \cite{HMM09} completed the classification of irreducible
$R$-modules begun by Kleshchev and Ram by determining the cuspidal
modules.
Recently, McNamara \cite{McN12} investigated this approach using PBW basis theory and computed the global dimension of $R$ for finite type.
However, it remains
 an open problem to construct irreducible graded
$R^\lambda$-modules.
For finite and affine type $\mathsf{A}$,
results in \cite{BK08, BKW11, HM10} tell us
that the algebra $R^\lambda$ has
a cellular basis, which yields irreducible modules by using the
cellular basis techniques introduced in \cite{GL96}.

In this paper, we give an explicit construction of  all irreducible
graded modules over $R$ and $R^\lambda$ for KLR algebras of finite classical type as
the irreducible heads of certain induced modules. This generalizes the
type $\mathsf{A}_n$ result in \cite{KP10} to all finite classical types. Our construction differs
from the one given by Kleshchev and Ram and is based on the theory of crystal bases.
As a result,  the action of the Kashiwara operators is an integral part of the construction.

Here is a brief description of our work. Let $(\mathfrak{A}, \mathsf{P}, \Pi,
\mathsf{P}^{\vee}, \Pi^{\vee})$ be a symmetrizable Cartan datum. Let
$I$ be the index set of the simple roots, and let $n = |I|$. Set $I_{(n+1)} = I$ and take
$I_{(k)}\ (k=1,\ldots,n)$ to be subsets of $I$ such that $I_{(k)}
\subset I_{(k+1)}$ and $|I_{(k)}|=k$ for all $k$. Let
$\mathfrak{B}_k$ be the crystal obtained from $B(\infty)$ by
forgetting the $i$-arrows for $i \notin I_{(k)}$. For $v \in
B(\infty)$, let $u_0 = v$ and let $u_k$ be the highest weight vector
of the connected component of $\mathfrak{B}_k$ containing $v$ for
$k=1,\ldots,n$. Then, there exists a sequence $\mathbf{i}_k$ of
elements in $I$ such that $ u_{k-1} = \kf_{\mathbf{i}_k}u_k $,
where $\kf_{\mathbf{i}_k}$ is a product of the Kashiwara operators corresponding
to the terms in the sequence $\mathbf{i}_k$.   For
$k=1,\ldots,n$, define
$$ \mathcal{N}_k(v) = \kf_{\mathbf{i}_k} \mathbf{1},$$
where $\mathbf{1}$ is the trivial for the KLR algebra $R(0) := \C$. In Proposition
\ref{Prop: crystal iso},  we prove for any symmetrizable Cartan datum
that $\hd\,\ind ( \mathcal{N}_1(v) \boxtimes \cdots \boxtimes \mathcal{N}_n(v)
)$  is an irreducible graded $R$-module and that the map $\Phi: B(\infty)
\longrightarrow \B(\infty)$ defined by
\begin{equation}\label{eq:crystaliso}  \Phi(v) = \hd\,\ind\left( \mathcal{N}_1(v) \boxtimes \cdots \boxtimes \mathcal{N}_n(v)\right) \ \  \text{ for } v \in B(\infty)
\end{equation}
is a crystal isomorphism.

When $\mathfrak{A}$ is of finite type, the modules
 $\mathcal{N}_k(v)$ can be given a more explicit description in terms of the Kashiwara operators
$\kf_i$ via the {\it adapted strings} introduced by Littelmann in
\cite{Lit98}.   For this, we  choose a special expression  $w_0= r_{\mathbf{s}_1} \cdots r_{\mathbf{s}_n}$
of the longest
element $w_0$  in the Weyl group $W$.  Here the $\mathbf{s}_k$ are sequences of
indices in $I$ (see Table \ref{Tb: w0}).
Using the description of adapted strings given in
\cite{Lit98}, we prove  for $v \in
B(\infty)$ that
$$ \mathcal{N}_k(v) = \kf_{\mathbf{s}_k} ^{\mathbf{a}(v)_k} \mathbf{1},  $$
where $\mathbf{a}(v)$ is the adapted string of $v$ with respect to this
expression for $w_0$,
and ${\mathbf{a}(v)_k}$ is the subsequence of $\mathbf{a}(v)$ defined by $\eqref{Eq: subseq of a}$ (see Proposition \ref{Prop: description of Nk} below).

Our main result  is an explicit construction of the
irreducible graded modules over the KLR algebras $R$ and $R^\lambda$ of finite
classical type $\mathsf{A}_n, \mathsf{B}_n, \mathsf{C}_n, \mathsf{D}_n$ in terms of adapted strings. Let $\mathcal{S}$ (resp.\
$\mathcal{S}^\lambda$) be the set of adapted strings of $B(\infty)$
(resp.\ $B(\lambda)$) given in Proposition \ref{Prop: S lambda}, and
let $\cB$ be the crystal of the irreducible module $V(\Lambda_n)$ labeled by
the fundamental weight $\Lambda_n$  if $\mathfrak{A}$ is of
type $\mathsf{A}_n$ (resp. $V(\Lambda_1)$ if $\mathfrak{A}$ is of type $\mathsf{B}_n,
\mathsf{C}_n, \mathsf{D}_n$). For $a,b \in \cB$ with $a \succ b$, using the structure
of the crystal $\cB$,  in  \eqref{Eq: def of nabla} we associate to $a,b$ an irreducible graded module $\Delta_{(a,b)}$.
The modules $\Delta_{(a,b)}$ are 1- or 2-dimensional, and in general they are {\it not} the
cuspidal modules found in \cite{HMM09,KR09}. Using the description
of $\mathcal{S}$ (resp.\ $\mathcal{S}^\lambda$), we define for $v \in B(\infty)$ (resp.\ $v \in B(\lambda)$)
 the module $\Delta(\mathbf{a}(v);k)$ to be the
outer tensor product of modules $\Delta_{(a,b)}$ as in
\eqref{Eq: nabla}. Then, it follows from Lemma \ref{Lem: gluing
lemma} that
\begin{equation}\label{eq:NhdInd} \mathcal{N}_k(v) = \hd\,\ind \Delta(\mathbf{a}(v);k)\ \ \text{ for } \  k=1,\ldots,n. \end{equation}

The description of the irreducible modules in  \eqref{eq:crystaliso} works for all symmetrizable Cartan data.
It follows from  \eqref{eq:crystaliso}  and \eqref{eq:NhdInd} that the irreducible modules for
the KLR algebras of finite classical type can be obtained from taking the outer tensor product
of heads of induced modules, inducing, then taking the head of the induced module.
But  this unduly complicated process can be simplified.  Indeed, we prove for the module $\Delta(\mathbf{a}(v)): =
\Delta(\mathbf{a}(v);1) \boxtimes
\cdots \boxtimes \Delta(\mathbf{a}(v);n)$ that
$\hd\,\ind \Delta(\mathbf{a}(v))$ is irreducible for  $v
\in B(\infty)$ (resp.\ $v \in B(\lambda)$) for all finite classical types and that the maps
\begin{align*}
\Psi:& B(\infty) \longrightarrow \B(\infty)\ \  \ \text{ given  by }    \Psi(v) = \hd\,\ind\,\Delta(\mathbf{a}(v)) \  \text{ for } \ v \in B(\infty),\\
\Psi^\lambda:&  B(\lambda)\ \longrightarrow \B(\lambda)\ \ \ \  \text{ given by }   \Psi^\lambda(v) = \hd\,\ind\,\Delta(\mathbf{a}(v)) \   \text{ for } \  v \in B(\lambda)
\end{align*}
are crystal isomorphisms (Theorem \ref{Thm: main theorem}).  Using
that fact, we show for the finite classical types that
\begin{align*}
\mathcal{A} &= \{ \hd\,\ind \Delta(\mathbf{a})\mid \mathbf{a} \in \mathcal{S}  \} \ \  \hbox{\rm and}  \\
\mathcal{A}^\lambda &= \{ \hd\,\ind \Delta(\mathbf{a} )\mid \mathbf{a} \in \mathcal{S}^\lambda \},
\end{align*}
as $\mathbf{a}$ ranges over the adapted strings,  provide complete
lists of all the irreducible graded modules over  $R$ and
$R^\lambda$, respectively, up to isomorphism and grading shift
(Corollary \ref{Cor: complete lists}).

Our paper is organized as follows. Section 1 contains a brief
review of crystal bases and KLR algebras
associated with any symmetrizable Cartan datum.  This section culminates with the proof of the
crystal isomorphism $\Phi: B(\infty) \longrightarrow \B(\infty)$
in \eqref{eq:crystaliso}.  Section 2 specializes to the case of finite type,  first
reviewing  Littelmann's result (see  \cite{Lit98})  on adapted stings
and  then giving an
explicit description of the modules $\mathcal{N}_k(v)$ for all finite types via adapted
stings.   Combining this description with the definition of
$\mathcal{N}_k(v)$,  we obtain an expression for $\mathcal{N}_k(v)$ in terms of Kashiwara
operators.

In Section 3, we restrict to the case of finite classical types. We
first define the modules $\Delta_{(a,b)}$ using the structure of the
crystals $\cB$ and construct the module $\Delta(\mathbf{a}; k)$ for
$\mathbf{a} \in \mathcal{S}$ (resp.\ $\mathbf{a} \in
\mathcal{S}^\lambda$)  as an outer tensor product of modules
$\Delta_{(a,b)}$  in $\eqref{Eq: nabla}$. We then construct the maps
$\Psi: B(\infty) \longrightarrow \B(\infty)$ and
$\Psi^\lambda: B(\lambda) \longrightarrow \B(\lambda)$ by taking the
head of $\ind \Delta(\mathbf{a}(v))$ for $v \in B(\infty)$ (resp.\
$v \in B(\lambda)$). We illustrate this
construction by presenting an example for type $\mathsf{B}_3$ using
the Kashiwara-Nakashima tableaux  in \cite{KN94} to realize the
crystal $B(\lambda)$.  We prove in Proposition  \ref{Lem: upper bound}  for
any finite classical type and any $v \in B(\lambda)$ that the number $\eta(v)$
of $\Delta_{(a,b)}$'s in
 $\Delta(\mathbf{a}(v))$
has an upper bound; i.e., $ \eta(v) \le n \lambda(h) $ for a certain
element $h \in \mathsf{P}^{\vee}$ (which depends on the type).

Section 4 is devoted to
proving that the maps $\Psi: B(\infty) \longrightarrow
\B(\infty)$ and $\Psi^\lambda: B(\lambda) \longrightarrow
\B(\lambda)$ are crystal isomorphisms.  To accomplish this, we give a sufficient condition in Lemma \ref{Lem: commuting lemma}  for the isomorphism
$\ind ( \Delta_{(a,b)} \boxtimes \Delta_{(c,d)} ) \simeq \ind (
\Delta_{(c,d)} \boxtimes \Delta_{(a,b)})$ to exist for finite classical types. Using this condition  together with
Lemma 4.3 of \cite{KP10} and the surjective homomorphism $\ind
(\Delta_{(a,b)} \boxtimes \Delta_{(b,c)}) \twoheadrightarrow
\Delta_{(a,c)}$, we prove in Lemma \ref{Lem: gluing lemma}  that $\mathcal{N}_n(v)  = \hd\,\ind
\Delta(\mathbf{a}(v);n)$.  It follows from that result
and the choice of the
sequence corresponding to $k$ that $\mathcal{N}_k(v) = \hd\,\ind
\Delta(\mathbf{a}(v);k)$ for all $k=1,\ldots,n$. Combining this
with the crystal isomorphism $\Phi$ in \eqref{eq:crystaliso}, we establish the crystal
isomorphisms $\Psi: B(\infty) \longrightarrow \B(\infty)$ and
$\Psi^\lambda: B(\lambda) \longrightarrow \B(\lambda)$.  This then
gives an explicit realization compatible with the Kashiwara operators  of
all the irreducible graded modules over $R$ and $R^\lambda$ for finite
classical types.  \vspace{-.1truecm}

\section{Crystals and Khovanov-Lauda-Rouquier Algebras} \label{Sec: Crystals and KLR algs}
 \vspace{-.1truecm}
\subsection{Crystals}\

 Let $I$ be a finite index set. A square matrix $\mathfrak{A} =
(a_{ij})_{i,j \in I}$ is a {\it symmetrizable generalized Cartan
matrix} if it satisfies (i) $a_{ii}=2$ for $i\in I$, (ii) $a_{ij}
\in \Z_{\le 0}$ for $i \ne j$, (iii) $a_{ij} = 0 $ if $a_{ji}=0$ for
$i,j \in I$, (iv) there is a diagonal matrix $\mathfrak{D} = {\rm
diag}(\delta_i \in \Z_{>0} \mid i\in I)$ such that
$\mathfrak{D}\mathfrak{A}$ is symmetric.

A {\it Cartan datum} $(\mathfrak{A}, \mathsf{P}, \Pi, \mathsf{P}^{\vee}, \Pi^{\vee})$ consists of  \begin{itemize}
\item[(1)] a symmetrizable generalized Cartan matrix $\mathfrak{A}$,
\item[(2)] a free abelian group $\mathsf{P}$ of finite rank, called the {\it weight lattice},
\item[(3)] the set $\Pi = \{ \alpha_i \mid i\in I \} \subset \mathsf{P}$ of {\it simple roots},
\item[(4)] the  {\it dual weight lattice}  $\mathsf{P}^{\vee} := \Hom(\mathsf{P}, \Z)$,
\item[(5)] the set $\Pi^{\vee} = \{ h_i \mid i\in I\} \subset \mathsf{P}^{\vee}$ of {\it simple coroots},
\end{itemize}
which satisfy the following properties:
\begin{itemize}
\item[(i)] $\langle h_i, \alpha_j \rangle := \alpha_j(h_i) = a_{ij}$ for all $i,j\in I$,
\item[(ii)] $\Pi \subset \h^*$ is linearly independent, where $\h := \C \otimes_{\Z} \mathsf{P}^{\vee}$ and $\h^*$ is the dual space,
\item[(iii)] for each $i \in I$, there exists $\Lambda_i \in \mathsf{P}$ such that $\langle h_j, \Lambda_i \rangle = \delta_{ij}$ for all $j\in I$.
\end{itemize}

The $\Lambda_i$ are the {\it fundamental weights}.  We denote by $\mathsf{P}^+ = \{ \lambda \in \mathsf{P} \mid \lambda(h_i) \in \Z_{\ge 0}, \ i \in I \}$ the set of
{\it dominant integral weights}.
The free abelian group $\mathsf{Q} = \bigoplus_{i \in I} \Z \alpha_i$ is the {\it root lattice},  and $\mathsf{Q}^+ = \sum_{i \in I} \Z_{\ge 0} \alpha_i$
is the {\it positive root lattice}. For $\alpha=\sum_{i \in I} k_i \alpha_i \in \mathsf{Q}^+$,  the \emph{height} of $\alpha$
is $|\alpha|:=\sum_{i \in I} k_i$.
There is a symmetric bilinear form $(\ | \ )$ on $\h^*$ such that
$$(\alpha_i| \alpha_j) = \delta_i a_{ij} \text{ for } i,j\in I, \quad \quad \langle h_i, \lambda \rangle =
\frac{2(\alpha_i | \lambda )}{(\alpha_i | \alpha_i)} \ \text{ for } \lambda \in \h^* \text{ and } i\in I.$$
Let $W$ be the {\it Weyl group}, which is the subgroup of ${ \rm Aut}(\h^*)$ generated by simple reflections $\{r_i\}_{i\in I}$
defined by $r_i(\lambda) := \lambda - \langle h_i, \lambda \rangle \alpha_i$ for $\lambda \in \h^*$ and $i\in I$.

Let $q$ be an indeterminate. For $i\in I$ and $m,n \in \Z_{\ge 0}$, define
$$ q_i=q^{\delta_i},\ \
 [n]_{q_i} =\frac{ {q_i}^n - {q_i}^{-n} }{ {q_i} - {q_i}^{-1} }, \ \
[n]_{q_i}! = \prod^{n}_{k=1} [k]_{q_i} , \ \
\left[\begin{matrix}m \\ n\\ \end{matrix} \right]_{q_i}=  \frac{ [m]_{q_i}! }{[m-n]_{q_i}! [n]_{q_i}! }.
$$

\begin{definition}
The {\em quantum group} $U_q(\g)$ associated
with the Cartan datum $(\mathfrak{A},\mathsf{P}, \Pi,\mathsf{P}^{\vee},\Pi^{\vee})$ is the associative
algebra over $\Q(q)$ with $1$ generated by $e_i,f_i$ $(i \in I)$ and
$q^{h}$ $(h \in \mathsf{P}^{\vee})$ satisfying the following relations:
\begin{enumerate}
  \item  $q^0=1, \ q^{h} q^{h'}=q^{h+h'} $ for $ h,h' \in \mathsf{P}^{\vee},$
  \item  $q^{h}e_i q^{-h}= q^{ \langle h,\alpha_i \rangle} e_i,
          \ q^{h}f_i q^{-h} = q^{- \langle h,\alpha_i \rangle }f_i$ for $h \in \mathsf{P}^{\vee}, i \in I$,
  \item  $e_if_j - f_je_i =  \delta_{ij} \dfrac{K_i -K^{-1}_i}{q_i- q^{-1}_i }, \ \ \mbox{ where } K_i=q^{\delta_i h_i},$
  \item  $\displaystyle \sum^{1-a_{ij}}_{k=0}(-1)^k \left[\begin{matrix}1-{a}_{ij} \\ k\\ \end{matrix} \right]_{q_i} e^{1-{a}_{ij}-k}_i
         e_j e^{k}_i =0 \quad \text{ if }  i \ne j, $
  \item $\displaystyle \sum^{1-a_{ij}}_{k=0}(-1)^k  \left[\begin{matrix}1-{a}_{ij} \\ k\\ \end{matrix} \right]_{q_i} f^{1-{a}_{ij}-k}_if_j
        f^{k}_i=0 \quad \text{ if }  i \ne j. $
\end{enumerate}
\end{definition}

The definition of the {\it category} $O_{int}^q$ of integrable $U_q(\g)$-modules, {\it crystal bases}, and {\it Kashiwara operators} can be found, for example, in \cite{HK02,Kash91}.
It was proved in \cite{Kash91} that every $U_q(\g)$-module in the category $O_{int}^q$ has a unique crystal basis $(L,B)$.
Let us recall the notion of a crystal as defined in \cite{Kash93a}.

\begin{definition}
A {\em crystal} is a set $B$ together with maps
$\  \mathsf{ wt} : B \to \mathsf{P},\ \varphi_{i},\varepsilon_{i} : B \to \Z \sqcup \{-\infty \} \mbox{ and } \
\tilde{e}_i,\tilde{f}_i : B \to B \sqcup \{0\} \ (i \in I) $ which satisfy the following conditions:
\begin{enumerate}
\item $\varphi_{i}(b) = \varepsilon_{i}(b) + \langle h_i, \mathsf{ wt}(b) \rangle,$
\item $ \mathsf{ wt}(\tilde{e}_i b)= \mathsf{ wt}(b)+\alpha_i,  \ \ \mathsf{ wt}(\tilde{f}_i b)= \mathsf{ wt}(b)-\alpha_i \mbox{ if } \tilde{e}_i b, \tilde{f}_i b \in B,$
\item for $ b,b^{\prime} \in B $ and $ i \in I,$\ $ b'=\tilde{e}_i b$ if and only if $b = \tilde{f}_i b',$
\item for $ b \in B$, if $ \varphi_{i}(b) = -\infty$, then $ \tilde{e}_i b = \tilde{f}_i b=0,$
\item if $ b \in B $ and $\tilde{e}_i b \in B$, then
        $\varepsilon_{i}(\tilde{e}_i b)= \varepsilon_{i}(b)-1, \ \  \varphi_{i}(\tilde{e}_i b)= \varphi_{i}(b)+1 $,
\item if $ b \in B$ and $\tilde{f}_i b \in B$, then
         $\varepsilon_{i}(\tilde{f}_i b)=\varepsilon_{i}(b)+1 ,
       \ \  \varphi_{i}(\tilde{f}_i b)=\varphi_{i}(b)-1. $
\end{enumerate}
\end{definition}
For $\mathbf{i}=(i_1, \ldots, i_m) \in I^m$ and $\mathbf{k} = (k_1, \ldots, k_m) \in (\Z_{\ge0})^m$,
let $\kf_{\mathbf{i}}^{\mathbf{k}} = \kf_{i_1}^{k_1} \cdots \kf_{i_m}^{k_m}$ (resp.\ $\ke_{\mathbf{i}}^{\mathbf{k}}
 = \ke_{i_1}^{k_1} \cdots \ke_{i_m}^{k_m}$). If $\mathbf{k} = (1, \ldots,1)$, then we write $\kf_{\mathbf{i}}$ (resp.\ $\ke_{\mathbf{i}}$)
for $\kf_{\mathbf{i}}^{\mathbf{k}}$ (resp.\ $\ke_{\mathbf{i}}^{\mathbf{k}}$).

\begin{examples} \  \begin{enumerate}
\item Associated with each module $M \in O_{int}^q$ is a crystal basis  $(L,B)$, where  $B$ is a crystal
with the maps
$$ \varepsilon_i(b) = \mathsf{max}\{ k\ge0 \mid \tilde e_i^k b \ne 0 \},
\qquad \varphi_i(b) =  \mathsf{max}\{ k\ge0 \mid \tilde f_i^k b \ne 0 \}. $$
We denote by $B(\lambda)$ the crystal of the irreducible
highest weight module $V(\lambda) \in O_{int}^q$ with highest weight $\lambda \in
\mathsf{P}^+$ and write $b_\lambda$ for the highest weight element of $B(\lambda)$.

\item Let $(L(\infty), B(\infty))$ be the crystal basis of $U_q^-(\g)$.
Then $B(\infty)$ is a crystal with the maps given by
$$ \varepsilon_i(b) =  \mathsf{max}\{ k\ge0 \mid \tilde e_i^k b \ne 0 \}, \qquad \varphi_i(b)=\varepsilon_i(b)+\langle h_i, \wt(b) \rangle .$$
Let $\mathsf{1}$ denote the highest weight element of $B(\infty)$.

\item For $\lambda \in \mathsf{P}$, the set $T_\lambda=\{ t_\lambda \}$ is a crystal with  \begin{align}
& \wt(t_\lambda)=\lambda,\quad \tilde e_i t_\lambda = \tilde f_i t_\lambda = 0 \  \text{  for  }\  i\in I, \nonumber \\
& \varepsilon_i(t_\lambda) = \varphi_i(t_\lambda) = -\infty \text{  for  } i \in I. \nonumber
\end{align}

\item The set $C = \{c \}$ is a crystal with
$$ \wt(c)=0,\quad \tilde e_i c= \tilde f_i c = 0, \quad \varepsilon_i(c) =  \varphi_i(c) = 0 \ \  (i \in I).$$
\end{enumerate}
\end{examples}

We refer to \cite{HK02,Kash93a} for more details on $U_q(\g)$-crystals.
It was shown in \cite{JKKS07,Kash93a} that for each $\lambda \in \mathsf{P}^+$ there is a unique strict crystal embedding
\begin{align} \label{Eq: crystal embedding}
\iota_\lambda : B(\lambda) \rightarrow B(\infty)\otimes T_\lambda \otimes C
\end{align}
sending $b_{\lambda}$ to $\mathsf{1} \otimes t_{\lambda} \otimes c$.

\subsection{Khovanov-Lauda-Rouquier algebras}\

For $\alpha \in \mathsf{Q}^+$ with $|\alpha|=m$, we define
$ I^\alpha = \{ {\bf i}=(i_1,\ldots,i_m) \in I^{m} \mid \sum_{k=1}^{m} \alpha_{i_k}=\alpha \}.$
Given $ \mathbf{i}=(i_1,\ldots,i_m) \in I^\alpha$ and
$ \mathbf{j}=(j_1,\ldots,j_{m'}) \in I^\beta$, let ${\bf i}*{\bf j}$ denote the concatenation of ${\bf i}$ and ${\bf j}$:
${\bf i}*{\bf j}=(i_1,\ldots,i_m,j_1,\ldots,j_{m'}) \in I^{\alpha+\beta}.$
Let $S_m $ be the symmetric group on $m$ letters with simple transpositions $\sigma_i\  (i=1,\ldots,m-1)$. Then
$S_m$ acts on $I^m$ in a natural way. For $d_1,\ldots,d_n \in \Z_{\ge 0}$, we denote
by $S_{d_1+\cdots +d_n} /S_{d_1} \times \cdots \times S_{d_n}$ the set of minimal left coset representatives of
$S_{d_1} \times \cdots \times S_{d_n}$ in $S_{d_1+\cdots +d_n}$.

Let $\mathsf{u},\mathsf{v}$ be indeterminates.
For each $i,j\in I$, we choose $ \zeta_{ij} \in \C \setminus \{0\}$ such that $\zeta_{ij} = \zeta_{ji}$ if ${a}_{ij}=0$
and elements $\eta_{ij}  \in \C$ with $\eta_{ij}^{pq} = \eta_{ji}^{qp}$ for all $p,q \in \mathbb Z_{>0}$ such that
$\delta_ip +\delta_jq = -(\alpha_i | \alpha_j)$ and set
\begin{align*}
\mathcal{Q}_{ij}(\mathsf{u},\mathsf{v}) =
\begin{cases}
0 & \text{ if } i=j, \\
\zeta_{ij} & \text{ if } i \ne j,\ {a}_{ij} =0, \\
\zeta_{ij} \mathsf{u}^{-{a}_{ij}} + {\displaystyle \sum_{ \substack{ p,q > 0,\\ \delta_ip + \delta_jq = -(\alpha_i | \alpha_j)}}} \eta_{ij}^{pq} \mathsf{u}^p \mathsf{v}^q + \zeta_{ji} \mathsf{v}^{-{a}_{ji}} & \text{ otherwise.}
\end{cases}
\end{align*}

\begin{definition} \
\begin{enumerate}
\item
Let $\alpha \in \mathsf{Q}^+$ with $|\alpha|=m$.
The {\it homogeneous Khovanov-Lauda-Rouquier algebra} $R(\alpha)$ at $\alpha$ associated with
$\mathfrak{A}$ and $(\mathcal{Q}_{ij})_{i,j \in I}$ is the associative graded $\C$-algebra generated by $\e(\mathbf{i})$ ($\mathbf{i} =(i_1,\ldots,i_m) \in I^\alpha$),
$x_\ell$ ($1\le \ell \le m$), $\tau_k$ ($1\le k < m$) satisfying the following defining relations:
\begin{align*}
& \e(\mathbf{i})\e(\mathbf{j})=\delta_{\mathbf{i},\mathbf{j}}\e(\mathbf{i}), \ \   \sum_{\mathbf{i} \in I^\alpha}\e(\mathbf{i})=1, \ \
x_k x_\ell=x_\ell x_k, \ \   x_\ell \e(\mathbf{i}) = \e(\mathbf{i}) x_\ell, \\
& \tau_k e(\mathbf{i}) = e(\sigma_k(\mathbf{i})) \tau_k , \ \   \tau_k\tau_\ell = \tau_\ell \tau_k \ \text{ if } |k-\ell| > 1, \\
&  \tau_k^2 \e(\mathbf{i}) =\mathcal{Q}_{i_k,i_{k+1}}(x_k,x_{k+1})\e(\mathbf{i}),  \\
& (\tau_k x_\ell-x_{\sigma_k(\ell)}\tau_k) \e(\mathbf{i}) =
            \begin{cases}
             - \e(\mathbf{i}) & \text{ if } \ell=k,\ \  i_k=i_{k+1}, \\
            \e(\mathbf{i}) & \text{ if } \ell=k+1,\ \  i_k=i_{k+1}, \\
            0 & \text{ otherwise,} \end{cases}  \allowdisplaybreaks   \\
& (\tau_{k+1}\tau_{k}\tau_{k+1}-\tau_{k}\tau_{k+1}\tau_{k})e(\mathbf{i}) \\
& \hspace{.5truecm} = \begin{cases}
\dfrac{\mathcal{Q}_{i_k,i_{k+1}}(x_{{k+2}},x_{{k+1}})-\mathcal{Q}_{i_k,i_{k+1}}(x_{{k}},x_{{k+1}})}{x_{k+2}-x_{k}}\e(\mathbf{i}) & \text{ if } i_k=i_{k+2}\neq i_{k+1}, \\
0 & \text{ otherwise.}
\end{cases}
\end{align*}
The algebra
$$R:=\bigoplus_{\alpha \in \mathsf{Q}^{+}}R(\alpha)$$
is called the {\it Khovanov-Lauda-Rouquier
algebra} ({\it KLR algebra}) associated with $\mathfrak{A}$ and $(\mathcal{Q}_{ij})_{i,j \in I}$.
\item Let $\lambda \in \mathsf{P}^{+}$. The {\it homogeneous cyclotomic Khovanov-Lauda-Rouquier algebra} $R^\lambda(\alpha)$ at $\alpha$ of weight $\lambda$ is
the quotient algebra of $R(\alpha)$ by the two-sided ideal $I^{\lambda}(\alpha)$ of $R(\alpha)$
generated by $x_m^{\langle h_{i_m}, \lambda \rangle}e(\mathbf{i})$ ($\mathbf{i} \in I^\alpha$). The algebra
$$R^{\lambda} := \bigoplus_{\alpha \in Q^+} R^\lambda(\alpha)$$
is the \emph{cyclotomic Khovanov-Lauda-Rouquier algebra} (\emph{cyclotomic KLR algebra}) of
weight $\lambda$.
\end{enumerate}
\end{definition}
The $\Z$-grading on $R(\alpha)$ is given by
$$ \mathsf{deg}( \e(\mathbf{i}))=0, \quad   \mathsf{deg}(x_\ell \e(\mathbf{i}))= (\alpha_{i_\ell}|\alpha_{i_\ell}),
\quad  \mathsf{deg}(\tau_k \e( \mathbf{i}))= -(\alpha_{i_k}|\alpha_{i_{k+1}}).$$
Here $R(0)=\C$.
Let $R(\alpha)$-$\mathsf{fmod}$ (resp.\ $R^{\lambda}(\alpha)$-$\mathsf{fmod}$) be the category of finite-dimensional $\Z$-graded
$R(\alpha)$-modules (resp.\ $R^{\lambda}(\alpha)$-modules).
Any $N \in R^{\lambda}(\alpha)$-$\mathsf{fmod}$ can be viewed as a
graded $R(\alpha)$-module annihilated by $I^{\lambda}(\alpha)$. We write $\infl^{\lambda}N$
when considering $N \in R^{\lambda}(\alpha)$-$\mathsf{fmod}$ as an $R(\alpha)$-module.
Any $ M \in R(\alpha)\fmod$ gives rise to the $R^\lambda(\alpha)$-module $\pr^{\lambda}M := M/I^{\lambda}(\alpha)M$.
From now on, when there is no possibility of confusion, we identify irreducible graded $R^\lambda(\alpha)$-modules with irreducible
graded $R(\alpha)$-modules annihilated by $I^\lambda(\alpha)$ via $\infl^\lambda$.

Let
$$ G_0(R)=\bigoplus_{\alpha \in \mathsf{Q}^+} G_0(R(\alpha)\text{-}\mathsf{fmod}), \quad
G_0(R^{\lambda})=\bigoplus_{\alpha \in \mathsf{Q}^+} G_0(R^{\lambda}(\alpha)\text{-}\mathsf{fmod}),$$
where $G_0(R(\alpha)\text{-}\mathsf{fmod})$ (resp. $G_0(R^{\lambda}(\alpha)\text{-}\mathsf{fmod})$) is the Grothendieck group
of $R(\alpha)\text{-}\mathsf{fmod}$ (resp. of $R^{\lambda}(\alpha)\text{-}\mathsf{fmod}$). For $M \in R(\alpha)\text{-}\mathsf{fmod}$ (resp.\ $R(\alpha)^\lambda\text{-}\mathsf{fmod}$), $[M]$ stands for
the isomorphism class of $M$ in $G_0(R(\alpha))$ (resp.\ $G_0(R^\lambda(\alpha))$). When no
confusion can arise, we write $M$ for $[M]$.

Given $M = \bigoplus_{i \in \Z} M_i$, let $M \langle k \rangle = \bigoplus_{i \in \Z} M \langle k \rangle _i$ denote the graded
module obtained from $M$ by shifting the grading by $k$, where $M \langle k \rangle_i := M_{i+k} $ for $i \in \Z $.
The {\it $q$-character} $\qch(M)$ and {\it character}
$\ch(M)$ of $M$ are defined by
$$ \qch(M) := \sum_{\mathbf{i} \in I^\alpha} \qdim( \e(\mathbf{i}) M ) \ \mathbf{i}, \qquad
 \ch(M) := \sum_{\mathbf{i} \in I^\alpha} \dm( \e(\mathbf{i}) M)  \ \mathbf{i}, $$
where $\qdim(N) := \sum_{i \in \Z} (\dm N_i) q^i $ for any graded module $N = \bigoplus_{i\in \Z} N_i$.
For $\mathbf{i}\in I^\alpha$, we write
$\mathbf{i} \in \ch (M) \  \text{(resp.\ $\mathbf{i} \in  \qch(M)$)}$
when $\mathbf{i}$ occurs in $\ch M$ (resp.\ $\qch(M)$) with a nonzero coefficient.
\smallskip

\textbf{Notation and Conventions.\ }
{\it In this paper, by a {\rm homomorphism} we mean a homogeneous homomorphism of some degree $k \in \Z$.
The symbol $\simeq$ (resp.\ $\twoheadrightarrow$, $\hookrightarrow$) will be used to denote an isomorphism
(resp.\ a surjective homomorphism, an injective homomorphism) up to a grading shift, and
the notation $\cong$ will be reserved for a degree preserving isomorphism.
For $M \in R(\alpha)\text{-}\mathsf{fmod}$ and $N \in R(\beta)\text{-}\mathsf{fmod}$, $M \boxtimes N$ will denote the outer tensor product
of $M$ and $N$.}

For $M,N \in R(\alpha)\text{-}\mathsf{fmod}$, let $\Hom(M,N)$ be the $\C$-vector space of
homogeneous homomorphisms of degree $0$, and let $ \HOM(M,N)=\bigoplus_{k \in \Z} \Hom(M,N\langle k \rangle).$
For $\beta_1,\ldots,\beta_m \in \mathsf{Q}^{+}$, we define
\begin{equation}\label{eq:ebeta} e({\beta_1, \ldots, \beta_m}) = \sum_{\mathbf{i}_j\in I^{\beta_j}} \e({\mathbf{i}_1 * \cdots * \mathbf{i}_m}).
\end{equation}
The natural embedding
$ R(\beta_1) \otimes \cdots \otimes R(\beta_m) \hookrightarrow R(\beta_1+\cdots+\beta_m)$ gives the following functors:
\begin{align*}
\ind_{\beta_1,\ldots,\beta_m}\ {\underline{\hspace{.25truecm}}} \ &:= \  R(\beta_1+\cdots+\beta_m) \otimes_{R(\beta_1) \otimes \cdots \otimes R(\beta_m)} \  {\underline{\hspace{.25truecm}}}\,, \\
\coind_{\beta_1,\ldots,\beta_m}\ {\underline{\hspace{.25truecm}}} \ &:= \  \HOM_{R(\beta_1) \otimes \cdots \otimes R(\beta_m)}(R(\beta_1+\cdots+\beta_m), \ {\underline{\hspace{.25truecm}}}\,), \\
\res_{\beta_1,\ldots,\beta_m}\ {\underline{\hspace{.25truecm}}} \ &:= \  e({\beta_1, \ldots, \beta_m})\ {\underline{\hspace{.25truecm}}}\,.
\end{align*}

\begin{lemma}[\cite{LV09}] \label{Lem: Frobenius reciprocity}
\begin{enumerate}
\item Let $M_k \in R(\beta_k)\fmod$ for $k=1, \ldots, m$. Then
$$ \ind_{\beta_1, \ldots, \beta_m} (M_1\boxtimes \cdots \boxtimes M_m) \simeq \coind_{\beta_m, \ldots, \beta_1} (M_m \boxtimes \cdots \boxtimes M_1).$$
\item For $M \in R(\beta_1) \otimes \cdots \otimes R(\beta_m)$-$\fMod$ and
$N \in R(\beta_1+\cdots+\beta_k)$-$\fMod$,
\begin{align*}
\HOM_{R(\beta_1 + \cdots + \beta_m)} ( \ind_{\beta_1, \ldots \beta_m}M, N) &\cong \HOM_{R(\beta_1)\otimes \cdots \otimes R(\beta_m)} ( M, \res_{\beta_1,\ldots,\beta_m}N),\\
\HOM_{R(\beta_1 + \cdots + \beta_m)} (N, \coind_{\beta_1, \ldots, \beta_m} M ) &\cong \HOM_{R(\beta_1) \otimes \cdots \otimes R(\beta_m)}(\res_{\beta_1, \ldots ,\beta_m}N, M).
\end{align*}
\end{enumerate}
\end{lemma}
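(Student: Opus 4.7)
My plan is to treat the two parts of the lemma separately, since (2) is essentially formal while (1) requires a specific structural fact about KLR algebras.

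Part (2) is a direct application of tensor-Hom adjunction. Set $\alpha = \beta_1+\cdots+\beta_m$, and write $A = R(\alpha)$, $B = R(\beta_1)\otimes\cdots\otimes R(\beta_m)$. By definition, $\ind$ is the functor $A\otimes_B -$, and $\coind$ is $\HOM_B(A, -)$, where $A$ carries the $A$-$B$-bimodule structure coming from the natural embedding $B\hookrightarrow A$. Restriction is left multiplication by the idempotent $e(\beta_1,\ldots,\beta_m)$ defined in \eqref{eq:ebeta}; this is the same as the functor $\HOM_A(Ae(\beta_1,\ldots,\beta_m), -)$ (up to the natural identification $Ae(\beta_1,\ldots,\beta_m) \cong A\otimes_B B$), so both adjunctions fall out of the standard tensor-Hom adjunction applied to the bimodule $Ae(\beta_1,\ldots,\beta_m)$.

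For Part (1), the plan is to produce an explicit bimodule intertwiner. The key structural input is that as a right $B$-module, $Ae(\beta_1,\ldots,\beta_m)$ is free with a basis $\{\tau_w\,e(\beta_1,\ldots,\beta_m)\}$ indexed by minimal length left coset representatives $w \in S_{|\alpha|}/(S_{d_1}\times\cdots\times S_{d_m})$, where $d_k=|\beta_k|$; this is the PBW-type basis for the KLR algebra. Let $w_{\mathrm{rev}}$ denote the longest such representative, namely the permutation that reverses the block structure $(\beta_1,\ldots,\beta_m)\mapsto(\beta_m,\ldots,\beta_1)$, and fix a reduced expression for $w_{\mathrm{rev}}$ with a corresponding element $\tau_{w_{\mathrm{rev}}}\in R(\alpha)$. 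The plan is to show that right multiplication by $\tau_{w_{\mathrm{rev}}}$ induces a degree-shifted bimodule isomorphism
\begin{equation*}
A\,e(\beta_1,\ldots,\beta_m) \;\xrightarrow{\;\sim\;}\; e(\beta_m,\ldots,\beta_1)\,A,
\end{equation*}
where the right action of $B = R(\beta_1)\otimes\cdots\otimes R(\beta_m)$ on the target is pulled back from the action of $R(\beta_m)\otimes\cdots\otimes R(\beta_1)$ via the tensor-factor reversal. Tensoring on the right with $M_1\boxtimes\cdots\boxtimes M_m$ then converts the isomorphism of bimodules into the claimed isomorphism between $\ind_{\beta_1,\ldots,\beta_m}(M_1\boxtimes\cdots\boxtimes M_m)$ and $\coind_{\beta_m,\ldots,\beta_1}(M_m\boxtimes\cdots\boxtimes M_1)$, after using Part (2) to rewrite $\coind$ as a tensor product against a dual basis of the free right module.

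The main obstacle will be verifying the bimodule-intertwining property of right multiplication by $\tau_{w_{\mathrm{rev}}}$. This requires commuting $\tau_{w_{\mathrm{rev}}}$ past each generator of the embedded copy of $B$ using the defining relations of $R(\alpha)$. A direct application of the braid-type relations produces error terms coming from $\mathcal Q_{ij}(x_k,x_{k+1})$, but these error terms involve $\tau_w$ for $w$ strictly shorter than $w_{\mathrm{rev}}$ in the Bruhat order; by the PBW-basis statement above, all such lower terms lie in the $B$-span of other coset representatives and vanish when one passes to the quotient $A\otimes_B -$. Once this intertwining is established, injectivity of the induced map $\ind\to\coind$ is clear from the basis description, and a dimension comparison (both sides are free $\Bbbk$-modules of rank $|S_{|\alpha|}/(S_{d_1}\times\cdots\times S_{d_m})| \cdot \prod_k \dim M_k$) forces the map to be an isomorphism.
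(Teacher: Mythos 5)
The paper does not prove this lemma; it is cited from \cite{LV09}, so there is no internal proof to compare against.

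Your Part (2) is a correct instance of tensor--Hom adjunction for the $\bigl(R(\alpha),\,R(\beta_1)\otimes\cdots\otimes R(\beta_m)\bigr)$-bimodule $R(\alpha)e(\beta_1,\ldots,\beta_m)$, where $\alpha=\beta_1+\cdots+\beta_m$.

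Part (1), however, has a genuine gap, centered on the claimed isomorphism $Ae(\beta_1,\ldots,\beta_m)\xrightarrow{\ \sim\ }e(\beta_m,\ldots,\beta_1)A$ via right multiplication (write $A=R(\alpha)$). First, this does not typecheck: the source is an $(A,B_1)$-bimodule with $B_1=R(\beta_1)\otimes\cdots\otimes R(\beta_m)$, the target is a $(B_2,A)$-bimodule with $B_2=R(\beta_m)\otimes\cdots\otimes R(\beta_1)$, and right multiplication by $\tau_{w_{\mathrm{rev}}}$ lands in the left ideal $A\tau_{w_{\mathrm{rev}}}\subseteq Ae(\beta_m,\ldots,\beta_1)$, not in the right ideal $e(\beta_m,\ldots,\beta_1)A$. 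Second, even the corrected map $Ae(\beta_1,\ldots,\beta_m)\to Ae(\beta_m,\ldots,\beta_1)$, $a\mapsto a\tau_{w_{\mathrm{rev}}}$, fails to be injective: already for $\alpha=2\alpha_i$, $\beta_1=\beta_2=\alpha_i$ one has $w_{\mathrm{rev}}=\sigma_1$ and $\tau_1^2 e(i,i)=\mathcal Q_{ii}(x_1,x_2)e(i,i)=0$, so $\tau_1$ is a nonzero element of the kernel. Third, your handling of the braid-relation error terms is not sound: a lower-order contribution $\tau_w c_w$ with $c_w\in B_1$ does not vanish after tensoring over $B_1$ --- it becomes $\tau_w\otimes c_w m$, which is nonzero in general. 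The object actually isomorphic to $Ae(\beta_1,\ldots,\beta_m)$ (up to grading shift and the factor-reversal identification $B_1\cong B_2$) is the left dual $\HOM_{B_2}\bigl(e(\beta_m,\ldots,\beta_1)A,\,B_2\bigr)$, and constructing that pairing is precisely the non-formal Frobenius/biadjointness input proved in \cite{LV09} (and going back to Khovanov--Lauda) which your sketch elides.
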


For $m \in \Z_{\ge 0}$ and $i\in I$, let
\begin{equation*}
L(i^m) = \ind_{\C[x_1, \ldots, x_m]}^{R(m\alpha_i)} \ \phi,
\end{equation*}
where $\phi$ is the 1-dimensional trivial module over $\C[x_1, \ldots, x_m]$ with $x_j \phi = 0$ for  all $j=1,\ldots,m$, and
$ \qdim(\phi) = 1$.
Let $\B(\infty)$ (resp.\ $\B(\lambda)$) be the set of all isomorphism classes of irreducible graded $R$-modules
(resp.\ $R^{\lambda}$-modules). Let $\mathbf{1}$ denote the $1$-dimensional trivial $R(0)$-module.
For a graded $R(\beta)$-module (resp.\ $R^\lambda(\beta)$-module) $M$, it was shown in \cite{KK11,KL09,LV09} that
the operators defined by
\begin{align*}
e_{i}(M) &= \res^{\alpha_i, \beta-\alpha_i}_{\beta-\alpha_i} \big( \e({\alpha_i, \beta-\alpha_i}) M\big), \\
f_{i}(M) &= \left\{
              \begin{array}{ll}
                R(\beta+\alpha_i)\e(\alpha_i, \beta) \otimes_{R(\beta)}M & \hbox{ if } \ \ M \in R(\beta)\text{-}\mathsf{mod}, \\
                R^\lambda(\beta+\alpha_i)\e(\alpha_i, \beta) \otimes_{R^\lambda(\beta)}M  & \hbox{ if } \ \  M \in R^\lambda(\beta)\text{-}\mathsf{mod}.
              \end{array}
            \right.
\end{align*} satisfy
\begin{align} \label{Eq: Serre}
\sum_{k=0}^{1-{a}_{ij}} (-1)^k e_i^{(1-{a}_{ij}-k)}e_je_i^{(k)}[M]=0, \quad \sum_{k=0}^{1-{a}_{ij}} (-1)^k f_i^{(1-{a}_{ij}-k)}f_jf_i^{(k)}[M]=0,
\end{align}
where $e_i^{(r)}[M]:=\dfrac{1}{[r]_{q_i}!}[e_i^{r}M]$ and $f_i^{(r)}[M]:=\dfrac{1}{[r]_{q_i}!}[f_i^{r}M]$ for $i,j \in I$, $r \in \Z_{\ge 0}$.

We define
\begin{align*}
  \wt(M) &= \begin{cases} -\beta &\ \  \text{ if } \ \ M \in R(\beta)\text{-}\mathsf{fmod}, \\
                       \lambda-\beta &\ \  \text{ if }\ \  M \in R^{\lambda}(\beta)\text{-}\mathsf{fmod}, \end{cases} \ \
 \\
 \varepsilon_{i} (M) &= \mathsf{max}\{ k \ge 0 \mid e_i^k M \ne 0 \}, \quad \
 \varphi_{i}(M) = \varepsilon_i(M)+ \wt(M)(h_i), \\
  \ke_{i}(M) &= \soc(e_i M), \ \ \hbox{\rm and} \ \
   \kf_{i}(M) =
  \begin{cases}
  \hd\,\ind_{\alpha_i, \beta} (L(i) \boxtimes M) & \ \  \text{ if } M \in R(\beta)\text{-}\mathsf{fmod},   \\
  \pr^{\lambda} \circ \kf_i \circ \infl^{\lambda} M & \ \  \text{ if }  M \in R^{\lambda}(\beta)\text{-}\mathsf{fmod}.
  \end{cases}
\end{align*}
The Kashiwara operators $\ke_i$ and $\kf_i$ are defined  in the
opposite manner in \cite{KP10,KL09}.  We have made this change so
they can be viewed as operators acting on the left of irreducible
modules. In Section \ref{Sec: construction for classical type}, we
will explain further why we want the Kashiwara operators defined in
this way.
\begin{theorem}[{\cite[Thm. 7.4, Thm. 7.5]{LV09}}] \label{Thm: iso-LV}
The sextuple $(\B(\infty),\wt, \ke_i, \kf_i, \varepsilon_i,
\varphi_i)$ (resp.\ $(\B(\lambda),\wt, \ke_i, \kf_i, \varepsilon_i,
\varphi_i))$ is a crystal, which is isomorphic to the crystal
$B(\infty)$ (resp.\ $B(\lambda)$).
\end{theorem}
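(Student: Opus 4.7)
The plan is to verify the crystal axioms on $\B(\infty)$ directly and then invoke Kashiwara's characterization theorem to identify $\B(\infty)$ with $B(\infty)$; the cyclotomic statement for $\B(\lambda)$ then follows from compatibility with the quotient functor $\pr^\lambda$.

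First I would establish the essential irreducibility statements. Let $M \in R(\beta)\fmod$ be irreducible. The key technical step is to show that $\soc(e_i M)$ and $\hd\,\ind_{\alpha_i,\beta}(L(i) \boxtimes M)$ are both irreducible, and moreover that they are nonzero precisely when $\varepsilon_i(M) > 0$ (for $\soc$) and always (for $\hd$). This uses a Mackey-style filtration on the restriction $\res_{\alpha_i,\beta}\ind_{\alpha_i,\beta}(L(i) \boxtimes M)$ together with the shuffle lemma for characters applied to $\qch(M)$, which allows one to identify the multiplicity of the extremal $i$-string $i^{\varepsilon_i(M)+1}$ in $\qch(\kf_i M)$ and control the head. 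Frobenius reciprocity (Lemma~\ref{Lem: Frobenius reciprocity}) then yields $\ke_i \kf_i M \simeq M$, and a dual argument gives $\kf_i \ke_i M \simeq M$ when $\ke_i M \ne 0$. The grading shifts on $\wt$, together with the categorification identity $f_i e_i - e_i f_i = (K_i - K_i^{-1})/(q_i - q_i^{-1})$ on $G_0(R)$ (via \eqref{Eq: Serre} and the $q$-deformed commutator relations verified in the KLR setting), give $\varphi_i(M) = \varepsilon_i(M) + \langle h_i, \wt(M)\rangle$ and the expected behavior of $\varepsilon_i, \varphi_i$ under $\ke_i, \kf_i$. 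This checks all six crystal axioms.

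Next I would identify $\B(\infty)$ with $B(\infty)$. The natural approach is Kashiwara's uniqueness: $B(\infty)$ is the unique (up to isomorphism) crystal with a highest weight element $\mathbf{1}$ of weight $0$ satisfying (i) $\wt(\mathbf{1})=0$, $\varepsilon_i(\mathbf{1})=0$ for all $i$, (ii) $B(\infty)$ is generated by $\mathbf{1}$ under the $\kf_i$, and (iii) a characterization in terms of a strict crystal embedding $B(\infty) \hookrightarrow B(\infty) \otimes B_i$ satisfying Kashiwara's $\ast$-axioms. Taking the trivial $R(0)$-module $\mathbf{1}$ as the highest weight element of $\B(\infty)$, I would verify that every irreducible $R$-module $M$ is obtained from $\mathbf{1}$ by repeated application of $\kf_i$'s, by reverse-inducting on $|\wt(M)|$ using that $\ke_i M \ne 0$ for some $i$ whenever $\wt(M) \ne 0$ (a character argument: some $\mathbf{i}$ appearing in $\ch M$ must begin with some index $i$, forcing $\varepsilon_i(M)>0$). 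To verify the $\ast$-axiom, one uses the anti-involution $\psi$ of $R(\beta)$ fixing the generators, which gives a second crystal structure $(\ke_i^\ast, \kf_i^\ast)$ on $\B(\infty)$ via $M \mapsto M^\psi$, and then checks the commutation relations between starred and unstarred operators by another Mackey filtration argument.

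For the cyclotomic case, the inflation functor $\infl^\lambda$ identifies $\B(\lambda)$ with the subset of $\B(\infty)$ consisting of irreducibles not annihilated by $I^\lambda(\beta)$, and the Kashiwara operators commute with $\pr^\lambda$ by construction. The crystal embedding $\iota_\lambda: B(\lambda) \hookrightarrow B(\infty) \otimes T_\lambda \otimes C$ in \eqref{Eq: crystal embedding} has a categorical counterpart: $M \in \B(\lambda)$ corresponds to $\infl^\lambda M \otimes t_\lambda \otimes c$. One then verifies this embedding is strict and image-matches $\iota_\lambda$, again by Kashiwara's uniqueness applied to highest weight crystals.

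The main obstacle is the irreducibility of $\soc(e_i M)$ and $\hd\,\ind(L(i) \boxtimes M)$, which underpins everything else. The standard route via Mackey filtrations is combinatorially involved because the shuffle products that describe $\res\,\ind$ on characters must be controlled finely enough to isolate the $i^{\varepsilon_i(M)+1}$-strand; a less delicate but equally important point is the connectedness of $\B(\infty)$, which relies on the fact that any irreducible $M$ can be reached from $\mathbf{1}$ under $\kf$'s—this connectedness is precisely what legitimizes the use of Kashiwara's characterization theorem to produce the isomorphism $B(\infty) \xrightarrow{\sim} \B(\infty)$.
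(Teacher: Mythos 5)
This statement is cited from Lauda--Vazirani (\cite[Thms.~7.4, 7.5]{LV09}); the paper does not prove it, so there is no in-paper argument to compare your proposal against. What you have sketched is, in broad strokes, a faithful reconstruction of the actual Lauda--Vazirani argument (which itself adapts Grojnowski--Kleshchev--Vazirani's affine Hecke algebra strategy to the KLR setting): Mackey/shuffle control of $\res\,\ind$ to get irreducibility of $\soc(e_i M)$ and $\hd\,\ind(L(i)\boxtimes M)$ and the inverse relation $\ke_i\kf_i\simeq\id$; connectedness of $\B(\infty)$ by stripping $\ke_i$'s; the $\ast$-crystal structure and Kashiwara's uniqueness criterion for $B(\infty)$; and the descent to $\B(\lambda)$ via $\pr^\lambda/\infl^\lambda$ and the embedding into $B(\infty)\otimes T_\lambda\otimes C$.

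Two small imprecisions worth flagging. First, the line about "the categorification identity $f_ie_i-e_if_i=(K_i-K_i^{-1})/(q_i-q_i^{-1})$" giving $\varphi_i(M)=\varepsilon_i(M)+\langle h_i,\wt(M)\rangle$ is unnecessary here: as in the paper, $\varphi_i$ is simply \emph{defined} by that formula on $\B(\infty)$, so there is nothing to derive; what the commutator identity actually buys you (in LV) is the compatibility of the categorified $e_i,f_i$ on $G_0(R)$ with the quantum group, which is a separate point from the crystal axioms. Second, the $\ast$-crystal is more cleanly described via restriction from the opposite side, i.e.\ $e_i^\ast M = e(\beta-\alpha_i,\alpha_i)M$ viewed over $R(\beta-\alpha_i)$, rather than via conjugation by the anti-involution $\psi$ --- the two descriptions are related, but the restriction picture is what makes the Mackey computations for the $\ast$-operators tractable and is what LV actually use. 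Neither point is a gap; both are normalization/route choices. The essential and genuinely delicate steps --- the extremal-string multiplicity argument for $\hd\,\ind(L(i)^{\boxtimes r}\boxtimes M)$, the interaction of starred and unstarred operators needed to invoke Kashiwara's $\ast$-characterization, and strictness of the embedding $\B(\lambda)\hookrightarrow\B(\infty)\otimes T_\lambda\otimes C$ --- you have correctly identified, even if the proposal leaves them as sketches.
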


 \subsection{Construction of irreducible $R$-modules}\

In this subsection, we give a construction of irreducible graded $R$-modules as the irreducible heads of certain induced modules,
which is compatible with the Kashiwara operators. More precisely, set $I_{(n+1)} = I$ and let $I_{(k)} \subset I\ (k=1, \ldots,n)$ be subsets of $I$ such that
$I_{(k)} \subseteq I_{(k+1)}$ and $|I_{(k)}| = k$.  Let
$\mathfrak{B}_k$ be the crystal obtained from $B(\infty)$ by forgetting the $i$-arrows for $i \notin  I_{(k)}$.
Using highest weight vectors of connected components of $\mathfrak{B}_k$,
we define irreducible $R$-modules $\mathcal{N}_k(v)$ for $k=1,\ldots,n$ and $v \in B(\infty)$.
Then, we construct a crystal isomorphism $\Phi: B(\infty) \rightarrow \B(\infty)$
defined by $\Phi(v) = \hd\,\ind (\mathcal{N}_1(v) \boxtimes \cdots \boxtimes \mathcal{N}_n(v))$ (Proposition \ref{Prop: crystal iso}).
We emphasize that this crystal isomorphism $\Phi$ exists for an arbitrary symmetrizable Cartan datum.

For $M \in R(\beta)\fmod$, $M_k \in R(\beta_k)\fmod$ ($k =1,\ldots, m$) and $d \in \Z_{>0}$, set
\begin{align*}
M^{\boxtimes 0} = \C, \qquad M^{\boxtimes d} = \underbrace{M\boxtimes \cdots \boxtimes M}_d \qquad
\dbt{k=1}{m} M_k = M_1 \boxtimes \cdots \boxtimes M_m.
\end{align*}

\begin{lemma} \label{Lem: irr of induced module}
 Let $M_k \in R(\beta_k)\fmod$, \ $d_k \in \Z_{\ge 0}$, and $\mathbf{i}_k \in I^{d_k \beta_k}$ for $k = 1, \ldots, m$. Suppose that
$\mathbf{i}_k$ occurs in $\ch\,\ind (M_k^{\boxtimes d_k})$ with multiplicity $\xi_k \in \Z_{>0}$ for $k = 1, \ldots, m$. Assume
\begin{enumerate}
\item[(i)] $\ind \left(M_k \boxtimes M_{k'}\right) \simeq \ind \left(M_{k'} \boxtimes M_k\right) $ for $k,k'=1, \ldots,m$,
\item[(ii)] $\ind \left(M_k^{\boxtimes d_k}\right)$ is irreducible for $k=1, \ldots,m$, and
\item[(iii)] $\mathbf{i} := \mathbf{i}_1 * \cdots *\mathbf{i}_m$ occurs in $\ch\,\ind \left(\dbt{k=1}{m} M_k^{\boxtimes d_k}
\right)$ with multiplicity $\xi_1 \xi_2 \cdots \xi_m$.
\end{enumerate}
Then $\ind\left(\dbt{k=1}{m} M_k^{\boxtimes d_k}\right)$ is irreducible.
\end{lemma}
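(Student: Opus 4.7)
The plan is to prove the lemma in three steps: first, reduce the problem to inducing already-irreducible modules using (i) and (ii); second, use Frobenius reciprocity together with (iii) to show that the head of $N$ is a single irreducible; third, upgrade this to full irreducibility via a parallel socle analysis.

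First, I would set $\tilde{N}_k := \ind(M_k^{\boxtimes d_k})$, which is irreducible by (ii). Iterating the commutation (i) on adjacent factors and invoking associativity of induction, one obtains $\ind(\tilde{N}_k \boxtimes \tilde{N}_{k'}) \simeq \ind(\tilde{N}_{k'} \boxtimes \tilde{N}_k)$ for all $k, k'$, and transitivity of $\ind$ yields
\[
N := \ind\bigl(\dbt{k=1}{m} M_k^{\boxtimes d_k}\bigr) \simeq \ind\bigl(\dbt{k=1}{m} \tilde{N}_k\bigr).
\]
This reduces the task to proving irreducibility of an induced product of pairwise commuting irreducible modules, subject to (iii).

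Next, to control the head, let $L$ be any irreducible quotient of $N$. By Lemma~\ref{Lem: Frobenius reciprocity}(2),
\[
0 \neq \Hom(N, L) \cong \Hom\bigl(\dbt{k=1}{m} \tilde{N}_k,\; \res_{d_1\beta_1,\ldots,d_m\beta_m} L\bigr).
\]
Since $\dbt{k=1}{m} \tilde{N}_k$ is an irreducible $R(d_1\beta_1) \otimes \cdots \otimes R(d_m\beta_m)$-module, any nonzero such homomorphism is injective, so $\dbt{k=1}{m} \tilde{N}_k \hookrightarrow \res L$ and hence $[\ch L : \mathbf{i}] \geq \xi_1 \cdots \xi_m$. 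Combining with (iii) and the additivity of characters along a composition series, $\hd N$ must be a single irreducible $L$ appearing with composition multiplicity exactly one, satisfying $[\ch L : \mathbf{i}] = \xi_1 \cdots \xi_m$, while every other composition factor $L''$ of $N$ satisfies $[\ch L'' : \mathbf{i}] = 0$.

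Finally, to conclude $N \simeq L$, I would use Lemma~\ref{Lem: Frobenius reciprocity}(1) to realize $N$ as $\coind(\tilde{N}_m \boxtimes \cdots \boxtimes \tilde{N}_1)$ up to grading shift; the adjointness of $\coind$ and $\res$ then shows that any irreducible submodule $L' \subset N$ satisfies $[\ch L' : \mathbf{i}_m * \cdots * \mathbf{i}_1] \geq \xi_1 \cdots \xi_m$. Using the isomorphism $\hd N \simeq \hd\,\ind(\tilde{N}_m \boxtimes \cdots \boxtimes \tilde{N}_1)$ that arises from iterated (i), together with Frobenius reciprocity applied to the reordered product, I also obtain $[\ch L : \mathbf{i}_m * \cdots * \mathbf{i}_1] \geq \xi_1 \cdots \xi_m$. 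If $L' \not\simeq L$, the contributions from $L$ and $L'$ to the character at $\mathbf{i}_m * \cdots * \mathbf{i}_1$ would combine to at least $2\xi_1 \cdots \xi_m$; ruling this out forces $L' \simeq L$, so $\soc N \simeq L$, and since $L$ has composition multiplicity one and is simultaneously head and socle, the composition series has length one, proving $N \simeq L$. The hard part, I expect, is securing the sharp upper bound $[\ch N : \mathbf{i}_m * \cdots * \mathbf{i}_1] < 2\xi_1 \cdots \xi_m$ needed for this final character comparison; this should follow from a Mackey analysis of the reordered induction combined with the character-preserving isomorphism from (i) and the way (iii) controls the Mackey filtration at $\mathbf{i}$.
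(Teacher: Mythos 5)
Your steps 1 and 2 are sound and track the paper's argument: set $\tilde N_k = \ind(M_k^{\boxtimes d_k})$, use (ii) plus Frobenius reciprocity (Lemma \ref{Lem: Frobenius reciprocity}(2)) to embed $\dbt{k=1}{m}\tilde N_k$ into $\res Q$ for any nonzero quotient $Q$, and combine with (iii) to see that the head of $N := \ind(\dbt{k=1}{m}\tilde N_k)$ is a single irreducible $L$ of composition multiplicity one, with $\mathbf{i}$ occurring only in that factor.

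The gap is in step 3, and you essentially point at it yourself. By applying Lemma \ref{Lem: Frobenius reciprocity}(1) directly to $N = \ind(\tilde N_1 \boxtimes \cdots \boxtimes \tilde N_m)$, you land on $\coind(\tilde N_m \boxtimes \cdots \boxtimes \tilde N_1)$, and the resulting submodule bound concerns the \emph{reversed} word $\mathbf{i}_m * \cdots * \mathbf{i}_1$, for which hypothesis (iii) provides no upper bound on $[\ch N : \mathbf{i}_m * \cdots * \mathbf{i}_1]$. The proposed Mackey analysis to recover such a bound is not filled in and is not the right route; one cannot generally control the multiplicity of the reversed word from the hypotheses as stated. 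The fix is to apply (i) \emph{before} converting $\ind$ to $\coind$: by (i), $N \simeq \ind(\tilde N_m \boxtimes \cdots \boxtimes \tilde N_1)$, and then Lemma \ref{Lem: Frobenius reciprocity}(1) applied to \emph{this} presentation gives $N \simeq \coind(\tilde N_1 \boxtimes \cdots \boxtimes \tilde N_m) = \coind\bigl(\dbt{k=1}{m}\tilde N_k\bigr)$, i.e.\ coinduction from the \emph{original} order. Now for any nonzero submodule $L' \subseteq N$, the adjunction
$\HOM(L', \coind(\dbt{k=1}{m}\tilde N_k)) \cong \HOM\bigl(\res_{d_1\beta_1,\ldots,d_m\beta_m} L',\ \dbt{k=1}{m}\tilde N_k\bigr)$
produces a nonzero, hence surjective, map $\res L' \twoheadrightarrow \dbt{k=1}{m}\tilde N_k$, and therefore $[\ch L' : \mathbf{i}] \geq \xi_1\cdots\xi_m$, where $\mathbf{i}$ is the \emph{original} concatenation. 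Hypothesis (iii) then forces equality, so $\mathbf{i}$ appears in $L'$ with full multiplicity, which (by your step 2) forces $L'$ to contain the head factor; since that factor has composition multiplicity one in $N$, you conclude $N$ is irreducible directly, with no further character bound needed. This reordering is exactly how the paper closes the argument.
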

\begin{proof}
Let $N_k = \ind \left(M_k^{\boxtimes d_k}\right)$ for $k=1, \ldots, m$,  and let
$Q$ be a nonzero quotient of the module $\ind\left(\dbt{k=1}{m}
N_k\right)$. By (ii)  and Lemma \ref{Lem: Frobenius reciprocity}
(2), we have an injective homomorphism $$ \dbt{k=1}{m} N_k    \hookrightarrow \res_{d_1 \beta_1, \ldots, d_m \beta_m} Q.
$$
Hence, by (iii), \ $\mathbf{i}$ occurs in $\ch Q$ with multiplicity $\xi_1 \xi_2\cdots \xi_m$. Since $Q$ is arbitrary, we obtain that
$ \hd\,\ind \left(\dbt{k=1}{m} N_k\right) $ is irreducible.

Now it follows from (i)  that
$$ \ind\left(\dbt{k=1}{m} N_k\right) \simeq \ind\left(N_m \boxtimes N_{m-1} \boxtimes \cdots \boxtimes N_1\right). $$
Let $L$ be a nonzero submodule of $\ind \left(\dbt{k=1}{m} N_k\right)$.
By $\rm (ii)$ and Lemma \ref{Lem: Frobenius reciprocity}, there exists a surjective homomorphism
$$ \res L \twoheadrightarrow \dbt{k=1}{m} N_k, $$
which implies that $\mathbf{i}$ occurs in $\ch L$ with multiplicity $\xi_1 \xi_2\cdots \xi_m$ by $\rm (iii)$. Therefore,
we conclude $\ind \left(\dbt{k=1}{m} N_k\right) = \ind\left(\dbt{k=1}{m} M_k^{\boxtimes d_k}\right)$
is irreducible.
\end{proof}

\begin{lemma} \label{Lem: sum of fi}
 Let $\emptyset = I_{(0)} \subsetneq I_{(1)} \subsetneq I_{(2)} \subsetneq \cdots \subsetneq I_{(m)} \subset I $, and assume $\beta_k \in \sum_{i \in I_{(k)}} \Z_{\ge0}\alpha_i$ for $k=1,\ldots, m$.
Let $M_k \in R(\beta_{k})\fmod\ \ (1 \le k \le m)$ be such that
\begin{enumerate}
\item[(i)] $\varepsilon_i(M_k)=0$ for $i \in I_{(k-1)}$,
\item[(ii)] $\hd M_k$ is irreducible, and
\item[(iii)] $\hd M_k$ occurs with multiplicity one as a composition factor of $M_k$.
\end{enumerate}
Then
\begin{enumerate}
\item $\hd\,\ind\left(\dbt{k=1}{m}M_k\right)$ is irreducible,
\item $\hd\,\ind\left(\dbt{k=1}{m}M_k\right)$ occurs with multiplicity one as a composition factor of $\ind\left(\dbt{k=1}{m}M_k\right)$,
\item if $\kf_{\mathbf{i}_k} \mathbf{1} \simeq \hd M_k $ for some $\mathbf{i}_k \in I^{\beta_{k}}$, then
$\kf_{\mathbf{i}_1}\kf_{\mathbf{i}_2} \cdots \kf_{\mathbf{i}_{m}} \mathbf{1} \simeq \hd\,\ind\left(\dbt{k=1}{m}M_k\right)$.
\end{enumerate}
\end{lemma}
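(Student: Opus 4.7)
I would prove the lemma by induction on $m$. The base case $m=1$ is immediate: (1) is hypothesis (ii), (2) is (iii), and (3) is the given assumption $\kf_{\mathbf{i}_1}\mathbf{1} \simeq \hd M_1$. For the inductive step, set $N := \ind\bigl(\dbt{k=1}{m-1} M_k\bigr)$ and $H := \hd N$; applying the inductive hypothesis to $M_1,\ldots,M_{m-1}$ with respect to the chain $I_{(0)} \subsetneq \cdots \subsetneq I_{(m-1)}$ yields that $H$ is irreducible, has multiplicity one as a composition factor of $N$, and (under the hypothesis of (3)) satisfies $H \simeq \kf_{\mathbf{i}_1}\cdots\kf_{\mathbf{i}_{m-1}}\mathbf{1}$. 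By associativity of induction, $\ind\bigl(\dbt{k=1}{m} M_k\bigr) \cong \ind(N\boxtimes M_m)$, so the task reduces to analyzing $\ind(N\boxtimes M_m)$.

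The decisive input from (i) is a \emph{character separation}: since $\beta_k$ is supported on $I_{(k)}\subseteq I_{(m-1)}$ for each $k\le m-1$, every word in $\ch(N)$ uses only indices from $I_{(m-1)}$, while (i) for $M_m$ forces every $\mathbf{j}\in\ch(M_m)$ to begin with an index lying in $I_{(m)}\setminus I_{(m-1)}$. Fix any $\mathbf{j}_N\in\ch(H)$ and $\mathbf{j}_m\in\ch(\hd M_m)$, and set $\mathbf{j}^* := \mathbf{j}_N * \mathbf{j}_m$. I would verify that $\mathbf{j}^*$ arises in $\ch(\ind(N\boxtimes M_m))$ only through the identity shuffle: any other shuffle of a pair $(\mathbf{j}'_N,\mathbf{j}'_m)\in\ch(N)\times\ch(M_m)$ producing $\mathbf{j}^*$ would force the leading letter of $\mathbf{j}'_m$ (which lies outside $I_{(m-1)}$) to occupy a position of $\mathbf{j}^*$ filled by an $I_{(m-1)}$-letter of $\mathbf{j}_N$, which is impossible. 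Consequently the multiplicity of $\mathbf{j}^*$ in $\ch(\ind(N\boxtimes M_m))$ equals the product of the multiplicities of $\mathbf{j}_N$ in $N$ and of $\mathbf{j}_m$ in $M_m$.

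With this multiplicity control the argument runs parallel to the proof of Lemma~\ref{Lem: irr of induced module}. For (1), given any nonzero quotient $Q$ of $\ind(N\boxtimes M_m)$, Frobenius reciprocity (Lemma~\ref{Lem: Frobenius reciprocity}) combined with the surjection $N\boxtimes M_m \twoheadrightarrow H\boxtimes\hd M_m$ onto the irreducible head forces $\mathbf{j}^*$ to persist in $\ch(Q)$ with the full multiplicity; hence every irreducible quotient of $\ind(N\boxtimes M_m)$ is determined up to isomorphism, yielding the claimed irreducibility. For (2), the dual application to nonzero submodules, using the coinduction form of Lemma~\ref{Lem: Frobenius reciprocity}, gives the multiplicity-one statement. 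For (3), iterating the defining identity $\kf_i(-) = \hd\,\ind(L(i)\boxtimes -)$ over the letters of $\mathbf{i}_m$, together with $\hd M_m \simeq \kf_{\mathbf{i}_m}\mathbf{1}$ and the inductive identification of $H$, identifies the unique irreducible head as $\kf_{\mathbf{i}_1}\cdots\kf_{\mathbf{i}_m}\mathbf{1}$. The principal obstacle I anticipate is the shuffle-uniqueness verification in the second paragraph; although conceptually clean, it requires carefully tracking how the leading non-$I_{(m-1)}$ letter of any word in $\ch(M_m)$ must land strictly past the final position of any $N$-word, which is precisely where the separation by $I_{(m-1)}$ does its work.
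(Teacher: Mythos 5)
Your reduction to a two–factor problem via induction on $m$, and your identification of the ``separation'' phenomenon coming from hypothesis~(i)---that words in $\ch(N)$ are supported on $I_{(m-1)}$ while every word in $\ch(M_m)$ begins with a letter in $I_{(m)}\setminus I_{(m-1)}$, so the only shuffle producing $\mathbf{j}_N * \mathbf{j}_m$ is the trivial one---is correct and is exactly the observation underlying the paper's computation that $\res_{\beta_1,\ldots,\beta_m}\bigl(\ind\bigl(\dbt{k=1}{m}M_k\bigr)\bigr)\simeq \dbt{k=1}{m}M_k$ via the vanishing of $e(\beta_1,\ldots,\beta_m)\tau_w e(\mathbf{i}_1*\cdots*\mathbf{i}_m)$ for $w\neq\id$. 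However, the step where you pass from this shuffle uniqueness to irreducibility of the head has a genuine gap. Your argument needs the multiplicity of $\mathbf{j}^*=\mathbf{j}_N*\mathbf{j}_m$ in $\ch(Q)$ for an arbitrary nonzero quotient $Q$ to match its multiplicity in $\ch\bigl(\ind(N\boxtimes M_m)\bigr)=\bigl(\text{mult of }\mathbf{j}_N\text{ in }\ch N\bigr)\cdot\bigl(\text{mult of }\mathbf{j}_m\text{ in }\ch M_m\bigr)$. But Frobenius reciprocity only gives you a surjection $N\boxtimes M_m\twoheadrightarrow\res Q$, from which you can deduce that $H\boxtimes\hd M_m$ is a composition factor of $\res Q$, hence that $\mathbf{j}^*$ occurs in $\ch Q$ with multiplicity at least $\bigl(\text{mult of }\mathbf{j}_N\text{ in }\ch H\bigr)\cdot\bigl(\text{mult of }\mathbf{j}_m\text{ in }\ch(\hd M_m)\bigr)$. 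Since the multiplicity of $\mathbf{j}_N$ in $\ch N$ can strictly exceed its multiplicity in $\ch H$ (and likewise for $M_m$), this lower bound is in general strictly below the total multiplicity, and the argument that there cannot be two irreducible quotients breaks down. Hypothesis~(iii) concerns \emph{composition-factor} multiplicities, not character multiplicities, and it is the former that must be used.

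The paper closes this gap by working in the Grothendieck group rather than with characters: it sets $L=\dbt{k=1}{m}\hd M_k$ and uses (ii), (iii), and the Mackey isomorphism $\res(\ind(\dbt M_k))\simeq\dbt M_k$ to see that $[L]$ occurs with multiplicity exactly one in $\bigl[\res\bigl(\ind(\dbt M_k)\bigr)\bigr]$. For any nonzero quotient $Q$, exactness of $\res$ makes $\res Q$ a quotient of $\dbt M_k$, so $[L]$ occurs in $[\res Q]$ with multiplicity exactly one; this uniformity across all quotients yields both (1) and (2) at once, without any assumption that a particular word lives only in the head. Two further points: your treatment of (2) via coinduction and submodules would require analyzing words of the form $\mathbf{j}_m*\mathbf{j}_N$, for which the shuffle uniqueness argument fails (the later letters of $\mathbf{j}_m$ may lie in $I_{(m-1)}$ and can be interleaved with $\mathbf{j}_N$); the paper avoids this by deducing (2) from the same quotient argument. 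And your sketch of (3) by ``iterating the defining identity $\kf_i(-)=\hd\,\ind(L(i)\boxtimes-)$'' elides the real work: the paper runs a separate downward induction on $|\wt(N)|$, peeling off $\ke_i^{\varepsilon}$ for $i\in I_{(1)}$ with $\varepsilon=\varepsilon_i(N_1)$ and using the exactness of $e_i$ together with the crystal-operator identities to reduce the weight, which is not the same as naively composing $\kf_i$'s.
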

\begin{proof}
 We may assume that $M_k \ne \mathbf{1}$ for $k=1, \ldots, m$. Let $d_k = |\beta_k|$ for $k=1,\ldots,m$,  and set $d = d_1 + \cdots + d_m $.
For $w\in S_d$ and a reduced expression $w = \sigma_{i_1}\cdots \sigma_{i_s} $, let
$$ \tau_w = \tau_{i_1}\cdots \tau_{i_s}. $$
For $k=1,\ldots,m$, take $\mathbf{i}_k = (i_{k,1}, \ldots,  i_{k,d_{k}}) \in I^{\beta_{k}}$ such that $ \e({\mathbf{i}_k}) M_k \ne 0$.
It follows from (i)  that $i_{k,1} \in I_{(k)} \setminus I_{(k-1)}$.
Since $M_k \in R(\beta_{k})\fmod$, we have
$$ \e({\beta_1, \ldots, \beta_{m}}) \tau_w \e({\mathbf{i}_1* \cdots *\mathbf{i}_{m}}) \ne 0 \ \ \text{ if and only if } w = \id $$
for $w \in S_{d} / S_{d_1} \times \cdots \times S_{d_m}$. Therefore,
$$ \res_{\beta_1, \ldots, \beta_m} \Big(\ind\Big( \dbt{k=1}{m}M_k\Big) \Big) \simeq \dbt{k=1}{m}M_k. $$

Let $L =  \dbt{k=1}{m} \hd M_k$.  It follows from
(ii) and (iii) that $[L]$ occurs with multiplicity one in $[
\res_{\beta_1, \ldots, \beta_m} \Big(\ind\Big( \dbt{k=1}{m}M_k\Big)
\Big) ]$ in the Grothendieck group $G_0(R(\beta_1) \otimes \cdots
\otimes R(\beta_m))$. Let $Q$ be a nonzero quotient of $\ind\Big(
\dbt{k=1}{m}M_k\Big)$. By Lemma \ref{Lem: Frobenius reciprocity}, we
have a nontrivial homomorphism
$$\dbt{k=1}{m}M_k  \rightarrow \res_{\beta_1, \ldots, \beta_m} Q,  $$
which implies that $[L]$ occurs in $[\res_{\beta_1, \ldots, \beta_m} Q]$ with multiplicity one. Since $Q$ is arbitrary,
we obtain assertions (1) and (2).

Now assume that $N = \hd\,\ind \Big(\dbt{k=1}{m}M_k\Big )$, and let $N_k = \hd M_k$ for $k=1,\ldots, m$. Then there is a surjective homomorphism
$$ \ind\Big(\dbt{k=1}{m}M_k\Big) \twoheadrightarrow \ind \Big(\dbt{k=1}{m}N_k\Big) ,$$
which implies that $N = \hd\,\ind\Big(\dbt{k=1}{m}N_k\Big)$. We will use induction on $|\wt(N)|$. If $|\wt(N)|=0$, there is nothing to prove, so
suppose that $|\wt(N)|>0$. Since $N_1$ is nontrivial,
we can take $i \in I_{(1)}$ such that $\varepsilon_i(N_1) \ne 0$. Let $\varepsilon = \varepsilon_i(N_1).$ Then, by $\rm (i)$ and Lemma \ref{Lem: Frobenius reciprocity}, we have
$$ \varepsilon = \varepsilon_i\Big(\ind \Big(\dbt{k=1}{m}N_k\Big)\Big) = \varepsilon_i(N) .$$
Since $e_i$ is exact,  there is a surjective homomorphism
$$ e_i^\varepsilon \left( \ind \Big(\dbt{k=1}{m}N_k\Big) \right) \twoheadrightarrow e_i^{\varepsilon}N.$$
Then, by \cite[Lem.~3.9]{KP10}, we know that $[e_i^{\varepsilon} N]  = q_{i}^{-\varepsilon + 1}\, [\varepsilon]_{q_i} !\,[ \ke_i^{\varepsilon}N]$ and
\begin{align*}
\Big[e_i^\varepsilon\Big ( \ind\Big(\dbt{k=1}{m}N_k\Big)\Big)\Big] & =  [\ind \left((e_i^\varepsilon N_1)\boxtimes N_2 \boxtimes \cdots \boxtimes N_{m})\right] \\
& = q_i^{-\varepsilon + 1} [\varepsilon]_{q_i} ! \, [\ind\left((\ke_i^\varepsilon N_1)\boxtimes N_2 \boxtimes \cdots \boxtimes N_{m}\right)]
\end{align*}
at the level of the Grothendieck group $G_0(R( \beta_1 + \cdots + \beta_m - \varepsilon \alpha_i))$. Since $\hd\,\ind ( (\ke_i^\varepsilon N_1)\boxtimes N_2 \boxtimes \cdots \boxtimes N_{m})$ is
irreducible by $(1)$,  we obtain
$$ \hd\,\ind (\ke_i^{\varepsilon}N_1 \boxtimes \cdots \boxtimes N_{m}) \simeq \ke_i^{\varepsilon} N. $$
Therefore, assertion (3) follows from a standard induction argument.
\end{proof}

Let $n = |I|$ be the rank of $U_q(\g)$, and let $I_{(k)} \subset I\ (k=1,\ldots,n)$ be subsets of $I = I_{(n+1)}$ such that $ I_{(k)} \subset I_{(k+1)}$ and $|I_{(k)}|=k$ for all $k$.
We denote by $U_q(\g_k)$ the subalgebra of $U_q(\g)$ generated by $e_i,\ f_i\ (i\in I_{(k)})$ and $q^h\ (h \in \mathsf{P}^{\vee})$.
Let $\mathfrak{B}_k$ be the crystal obtained
from $B(\infty)$ by forgetting the $i$-arrows for $i \notin  I_{(k)}$. Then $\mathfrak{B}_k$ can be viewed as a $U_q(\g_k)$-crystal.

\begin{lemma}
For each $k=1,\ldots, n$, every connected component of $\mathfrak{B}_k$ has a unique highest weight vector.
\end{lemma}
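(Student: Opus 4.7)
My plan is to establish existence and uniqueness of the highest weight vector in each connected component $C$ of $\mathfrak{B}_k$ separately; existence will be routine, while uniqueness is the substantive claim.

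For existence, I would argue by weight descent. Every $v \in B(\infty)$ satisfies $\wt(v) \in -\mathsf{Q}^+$, so we may write $\wt(v) = -\sum_{i \in I} c_i(v)\alpha_i$ with $c_i(v) \in \Z_{\ge 0}$. The quantity $d(v) := \sum_{i \in I_{(k)}} c_i(v)$ is then a nonnegative integer that strictly decreases by $1$ upon every application of $\tilde e_i$ for $i \in I_{(k)}$. Starting from any $v \in C$ and iteratively applying $\tilde e_i$'s with $i \in I_{(k)}$ whenever possible, the procedure terminates in at most $d(v)$ steps at an element of $C$ annihilated by every $\tilde e_i$, $i \in I_{(k)}$, which is the desired highest weight vector.

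For uniqueness, my plan is to exhibit $C$ as isomorphic (as a $U_q(\g_k)$-crystal) to $B_k(\infty)$, the crystal of $U_q^-(\g_k)$ for the sub-Kac-Moody algebra $\g_k$ associated to the sub-Cartan datum indexed by $I_{(k)}$. Since $B_k(\infty)$ possesses a unique highest weight vector, this forces the same for $C$. To construct the isomorphism, I would fix a highest weight vector $u_0 \in C$ produced by the existence argument and define $\Theta \colon B_k(\infty) \to B(\infty)$ by extending $\mathbf{1}_k \mapsto u_0$ compatibly with the Kashiwara operators $\tilde f_i$ for $i \in I_{(k)}$. The $I_{(k)}$-crystal structure on $B(\infty)$ satisfies the crystal axioms for $U_q(\g_k)$ (being inherited from the ambient $U_q(\g)$-structure), so $\Theta$ should be a well-defined $U_q(\g_k)$-crystal morphism, surjective onto $C$ (since $C$ is the $I_{(k)}$-orbit of $u_0$) and injective (since $\Theta$ preserves $\wt$, $\varepsilon_i$, and $\varphi_i$).

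The hard part will be justifying the well-definedness of $\Theta$, i.e.\ showing that the $I_{(k)}$-action on $B(\infty)$ realizes precisely the abstract $U_q(\g_k)$-crystal structure of $B_k(\infty)$ without any additional relations. In the finite-type setting most relevant to later sections of the paper, this can be verified by choosing a reduced expression for the longest element $w_0 \in W$ whose initial segment is a reduced expression for the longest element $w_{0,k} \in W_k$ and invoking the resulting PBW parameterization of $B(\infty)$: the first factor parameterizes $B_k(\infty)$, while the remaining coordinates index the distinct $I_{(k)}$-connected components. For general symmetrizable Cartan data one would instead apply Kashiwara's polyhedral (or monomial) realization of $B(\infty)$, or inductively peel off the indices outside $I_{(k)}$ via the Kashiwara embedding $B(\infty) \hookrightarrow B(\infty) \otimes B_j$ for $j \notin I_{(k)}$.
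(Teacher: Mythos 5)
Your existence argument by weight descent is correct and is essentially what the paper uses implicitly. For uniqueness, however, you aim at a much stronger statement than the paper needs, and the step you yourself identify as the ``hard part'' is left as a sketch, leaving a genuine gap.

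First, a small but real issue: the target of $\Theta$ cannot be $B_k(\infty)$ on the nose. A nontrivial component $C$ has highest weight vector $u_0$ with $\wt(u_0)\in-\mathsf{Q}^+$ involving some $\alpha_j$ with $j\notin I_{(k)}$, so $\varphi_i(u_0)=\langle h_i,\wt(u_0)\rangle$ is typically negative for $i\in I_{(k)}$, while $\varphi_i(\mathsf{1}_k)=0$; the correct statement would be $C\simeq B_k(\infty)\otimes T_\mu$ for an appropriate $\mu$. Second, and more substantially, the well-definedness and injectivity of $\Theta$ are precisely the nontrivial content of your claim, and neither is established. The Lusztig-data/PBW route (choosing a reduced word for $w_0$ whose initial segment is a reduced word for $w_{0,k}$, so that $\kf_i$ with $i\in I_{(k)}$ acts only on the first block of coordinates) is a genuine idea in finite type, but it rests on a nontrivial compatibility of the crystal operators with the Lusztig parameterization that you do not prove. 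More importantly, the lemma is stated and used in the paper for an arbitrary symmetrizable Cartan datum, before the finite-type restriction of Section 2; your suggestion for the general case (Kashiwara embedding $B(\infty)\hookrightarrow B(\infty)\otimes B_j$, or polyhedral realizations) does not obviously converge to the claimed identification, since after one embedding step the left factor is still the full $B(\infty)$.

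The paper's proof avoids all of this by proving only what is needed and by transferring to a setting where uniqueness is standard. Given a component $\mathcal{C}$ of $\mathfrak{B}_k$, it takes $\lambda\gg 0$ so that $\{v\otimes t_\lambda\otimes c : v\in\mathcal{C}\}$ meets $\im(\iota_\lambda)$ under the strict embedding $\iota_\lambda:B(\lambda)\hookrightarrow B(\infty)\otimes T_\lambda\otimes C$, and notes that the corresponding preimage lies in $B(\lambda)$. Since $B(\lambda)$ restricted to $I_{(k)}$-arrows is the crystal of the integrable $U_q(\g_k)$-module $V(\lambda)$ and therefore decomposes into highest weight crystals $B_k(\mu)$ with unique highest weight vectors, both existence and uniqueness pull back to $\mathcal{C}$ without ever identifying $\mathcal{C}$ itself. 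This is both shorter than your route and uniform across symmetrizable Cartan data; if you want to salvage your approach you would at minimum need to supply the omitted compatibility of the $I_{(k)}$-operators with the chosen parameterization, and replace $B_k(\infty)$ by $B_k(\infty)\otimes T_\mu$.
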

\begin{proof}
Let $\mathcal{C}$ be a connected component of $\mathfrak{B}_k$. For $\lambda \in \mathsf{P}^+$, let $\iota_\lambda : B(\lambda) \rightarrow B(\infty)\otimes T_\lambda \otimes C$ be the embedding in
$\eqref{Eq: crystal embedding}$ and let
$$\widetilde{ \mathcal{C}} = \{ v \otimes t_\lambda \otimes c \mid  v \in \mathcal{C} \} \subset B(\infty) \otimes T_\lambda \otimes C.$$
By taking  $\lambda \gg 0$,  we can assume  that $ \widetilde{ \mathcal{C}} \cap \im (\iota_\lambda) \ne \emptyset. $
Since $\mathcal{C}_\lambda: = \iota_\lambda^{-1}(\widetilde{ \mathcal{C}})$ is a nontrivial highest weight subcrystal of the crystal obtained from $B(\lambda)$ by forgetting the $i$-arrows with $i \notin I_{(k)}$,
there is an element $u_{\mathcal{C}} \in \mathcal{C}$ such that $\iota_\lambda^{-1}(u_{\mathcal{C}} \otimes t_\lambda \otimes c)$ is the highest weight vector of the subcrystal $\mathcal{C}_\lambda$. Note that
$u_\mathcal{C}$ does not depend on the choice of $\lambda$ if $ \widetilde{ \mathcal{C}} \cap \im (\iota_\lambda) \ne \emptyset$.
By construction, the element $u_\mathcal{C}$ is a unique highest weight vector of $\mathcal{C}$.
\end{proof}

Take $v \in B(\infty)$. Let $u_0 = v$ and let $u_k$ be the highest weight vector of the connected component $\mathcal{C}_k$ of $\mathfrak{B}_k$ containing $v$
for $k=1,\ldots, n$.
By construction, there is a chain of injective maps
$$ \mathcal{C}_1 \hookrightarrow \mathcal{C}_2 \hookrightarrow \cdots \hookrightarrow \mathcal{C}_{n-1} \hookrightarrow B(\infty).$$
For $k=1,\ldots, n$, let $\mathbf{i}_k$ be a sequence of $I$ such that $ u_{k-1} = \kf_{\mathbf{i}_k} u_{k}$.
Note that $v = \kf_{\mathbf{i}_1} \kf_{\mathbf{i}_1} \cdots \kf_{\mathbf{i}_{n}} \mathsf{1} $.
Let
\begin{align} \label{Eq: def of Nk}
\mathcal{N}_k(v) = \kf_{\mathbf{i}_k} \mathbf{1} \in \B(\infty) .
\end{align}
Hence, for each $v\in B(\infty)$, we have the corresponding $n$-tuple $(\mathcal{N}_1(v), \mathcal{N}_2(v), \ldots, \mathcal{N}_{n}(v))$ of
modules in $\B(\infty)$.

\begin{proposition} \label{Prop: crystal iso} \
\begin{enumerate}
\item For $v \in B(\infty)$, let $\mathcal{N}_k(v)$ be the irreducible module defined by $\eqref{Eq: def of Nk}$. Then
$ \hd\,\ind \left( \dbt{k=1}{n} \mathcal{N}_k(v)\right) $ is irreducible.
\item The map $\Phi: B(\infty) \longrightarrow \B(\infty)$  defined by
$$ \Phi(v) = \hd\,\ind \left( \dbt{k=1}{n} \mathcal{N}_k(v)\right) \ \  \text{ for } v \in B(\infty) $$
is a crystal isomorphism.
\end{enumerate}
\end{proposition}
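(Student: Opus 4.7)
The plan is to invoke Lemma \ref{Lem: sum of fi} with $M_k := \mathcal{N}_k(v)$ for $k = 1, \ldots, n$ and the strictly ascending chain $\emptyset = I_{(0)} \subsetneq I_{(1)} \subsetneq \cdots \subsetneq I_{(n)} = I$. Since $u_k$ and $u_{k-1}$ both lie in the connected component $\mathcal{C}_k$ of $\mathfrak{B}_k$, the sequence $\mathbf{i}_k$ may be taken with all entries in $I_{(k)}$, so $\beta_k := -\wt(\mathcal{N}_k(v))$ is supported on $I_{(k)}$. By Theorem \ref{Thm: iso-LV} each $\mathcal{N}_k(v) = \kf_{\mathbf{i}_k}\mathbf{1}$ is itself irreducible, so conditions (ii) and (iii) of Lemma \ref{Lem: sum of fi} hold automatically with $\hd\,\mathcal{N}_k(v) \simeq \mathcal{N}_k(v)$.

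The only nontrivial hypothesis to verify is condition (i), i.e., $\varepsilon_i(\mathcal{N}_k(v)) = 0$ for every $i \in I_{(k-1)}$. Under the Lauda--Vazirani isomorphism $B(\infty) \xrightarrow{\sim} \B(\infty)$ of Theorem \ref{Thm: iso-LV}, this translates to the claim $\varepsilon_i(\kf_{\mathbf{i}_k}\mathsf{1}) = 0$ in $B(\infty)$ for every $i \in I_{(k-1)}$, and this is where the crux of the argument lies. The key input is that $u_{k-1}$ is the highest weight element of the $\mathfrak{B}_{k-1}$-component of $v$, so $\varepsilon_i(u_{k-1}) = 0$ for $i \in I_{(k-1)}$, while $u_k$ is killed by $\ke_j$ for every $j \in I_{(k)} \supseteq I_{(k-1)}$. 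The sequence $\mathbf{i}_k$ transports these vanishing properties from $u_k$ to $u_{k-1}$, and the aim is to show that the same sequence, applied instead to the global highest weight $\mathsf{1}$, produces an element with the same $\varepsilon_i$-vanishing behaviour for $i \in I_{(k-1)}$. Making this rigorous amounts to tracking $\varepsilon_i$ step-by-step along the path starting at $\mathsf{1}$, using the crystal axioms together with the commutation of $\ke_i$ and $\kf_j$ in $B(\infty)$ whenever forced by the Cartan datum.

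Once hypothesis (i) is secured, Lemma \ref{Lem: sum of fi}(1) immediately gives the irreducibility of
\[
\Phi(v) \;=\; \hd\,\ind\Big(\dbt{k=1}{n}\mathcal{N}_k(v)\Big),
\]
establishing assertion (1). Moreover, Lemma \ref{Lem: sum of fi}(3), applied with $\kf_{\mathbf{i}_k}\mathbf{1} \simeq \hd\,\mathcal{N}_k(v)$, yields the identification
\[
\Phi(v) \;\simeq\; \kf_{\mathbf{i}_1}\kf_{\mathbf{i}_2}\cdots\kf_{\mathbf{i}_n}\mathbf{1}.
\]
Since $v = \kf_{\mathbf{i}_1}\kf_{\mathbf{i}_2}\cdots\kf_{\mathbf{i}_n}\mathsf{1}$ in $B(\infty)$ by construction, Theorem \ref{Thm: iso-LV} identifies the image of $v$ under the crystal isomorphism $B(\infty) \xrightarrow{\sim} \B(\infty)$ with exactly $\kf_{\mathbf{i}_1}\cdots\kf_{\mathbf{i}_n}\mathbf{1}$. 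Therefore $\Phi$ coincides with the Lauda--Vazirani isomorphism, and in particular is itself a crystal isomorphism, yielding assertion (2).
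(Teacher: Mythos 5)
Your proof follows the same route as the paper's: you reduce everything to Lemma \ref{Lem: sum of fi} and correctly single out the crux, verifying $\varepsilon_i(\mathcal{N}_k(v))=0$ for $i\in I_{(k-1)}$; your subsequent applications of Lemma \ref{Lem: sum of fi}(1), (3) and Theorem \ref{Thm: iso-LV} are exactly the paper's. The paper itself records this vanishing only ``by construction,'' so on the crucial point you are at the same level of detail as the source.

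The method you propose for that vanishing---tracking $\varepsilon_i$ step by step from $\mathsf{1}$ via crystal axioms and ``commutation of $\ke_i$ and $\kf_j$''---is not workable as stated, however. The crystal axioms control $\varepsilon_i$ only under $\ke_i$ and $\kf_i$; they say nothing about $\varepsilon_i(\kf_j b)$ for $j\ne i$, and $\ke_i$, $\kf_j$ do not commute on $B(\infty)$ with the generality you would need. The argument that actually closes the gap runs through crystal embeddings, in the spirit of the lemma preceding this proposition. For $\lambda\gg 0$ the element $u_{k-1}\otimes t_\lambda\otimes c$ lies in $\im(\iota_\lambda)$; its preimage lies in the $I_{(k)}$-connected component of $\iota_\lambda^{-1}(v\otimes t_\lambda\otimes c)$ inside $B(\lambda)$, which is a highest weight $U_q(\g_k)$-crystal with some dominant highest weight $\mu$. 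Embedding that component into $B_{(k)}(\infty)\otimes T_\mu\otimes C$, where $B_{(k)}(\infty)$ denotes the $B(\infty)$-crystal of $U_q(\g_k)$ with highest weight element $\mathsf{1}_{(k)}$, sends this preimage to $(\kf_{\mathbf{i}_k}\mathsf{1}_{(k)})\otimes t_\mu\otimes c$. Strict embeddings preserve $\varepsilon_i$, and the $T$- and $C$-factors do not alter $\varepsilon_i$ once $\lambda$, $\mu$ are dominant enough (since $\varepsilon_i(b\otimes t_\nu\otimes c)=\max\{\varepsilon_i(b),\,-\langle h_i,\wt(b)+\nu\rangle\}$); hence $\varepsilon_i(u_{k-1})=0$ transfers to $\varepsilon_i(\kf_{\mathbf{i}_k}\mathsf{1}_{(k)})=0$ for $i\in I_{(k-1)}$, and therefore to $\varepsilon_i(\kf_{\mathbf{i}_k}\mathsf{1})=0$, since the $I_{(k)}$-connected component of $\mathsf{1}$ in $B(\infty)$ is a copy of $B_{(k)}(\infty)$. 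This chain of embeddings also shows that $\kf_{\mathbf{i}_k}\mathbf{1}$ is independent of the choice of $\mathbf{i}_k$, so that $\eqref{Eq: def of Nk}$ is well-defined, a fact used tacitly both in the paper's proof and in yours.
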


\begin{proof}
Since it is obvious that $\Phi(\mathsf{1}) = \mathbf{1}$, we assume
$\mathsf{1} \ne v \in B(\infty)$ and let $ \mathcal{N}_k(v) = \kf_{\mathbf{i}_k} \mathbf{1} $ be the corresponding
irreducible module given in $\eqref{Eq: def of Nk}$
for $k=1, \cdots, n$.
By construction, we have
$$ v = \kf_{\mathbf{i}_1} \kf_{\mathbf{i}_1} \cdots \kf_{\mathbf{i}_{n}} \mathsf{1}\ \  \text{ and }\ \   \varepsilon_i( \mathcal{N}_k(v)) = 0
\ \ \text{ for all }  i\in I_{(k-1)}. $$
Therefore, it follows from Lemma \ref{Lem: sum of fi} that $ \hd\,\ind \left( \dbt{k=1}{n} \mathcal{N}_k(v) \right) $ is irreducible and
$$ \Phi(  \kf_{\mathbf{i}_1} \kf_{\mathbf{i}_2} \cdots \kf_{\mathbf{i}_{n}} \mathsf{1} )
= \hd\,\ind\left( \dbt{k=1}{n} \mathcal{N}_k(v)\right) = \kf_{\mathbf{i}_1} \kf_{\mathbf{i}_2}
\cdots \kf_{\mathbf{i}_{n}} \Phi(\mathsf{1}).$$    \end{proof}

\section{Description of $\mathcal{N}_k(v)$ for Finite Type Via Adapted Strings} \label{Sec: Description Nk}

In this section, we give an explicit description of the modules $\mathcal{N}_k(v)$ for finite type. To that end, we choose a particular reduced expression of the longest element $w_0$
of the Weyl group $W$.   Using the notion of adapted strings  for crystals given in \cite{Lit98} with respect to these specially chosen expressions,
we describe $\mathcal{N}_k(v)$ explicitly  in terms of Kashiwara operators $\kf_i$ (Proposition \ref{Prop: description of Nk}).

Throughout the section, we assume that the Cartan datum $(\mathfrak{A},\mathsf{P}, \Pi,\mathsf{P}^{\vee},\Pi^{\vee})$ is of finite type.   For a sequence $\mathbf{m} = (m_1, \ldots, m_k)$ of elements in $I$, we write
$ r_\mathbf{m} = r_{m_1} r_{m_2} \cdots r_{m_k}$ for the corresponding element in the Weyl group.
Using this convention, we  fix a particular reduced expression for
the longest element $w_0$ of $W$
\begin{equation}\label{eq:reduced} w_0  = r_{\mathbf{s}_1}r_{\mathbf{s}_2} \cdots r_{\mathbf{s}_n} = r_{s_1} r_{s_2} \cdots r_{s_\ell} \end{equation}
where the sequences $\mathbf{s}_k$ are displayed in Table $\ref{Tb: w0}$ below.
Let $l_k$ be  the length of the sequence $\mathbf{s}_k$ in
\eqref{eq:reduced}  for $k=1,\ldots,n$.   Then  $r_{\mathbf{s}_1} = r_{s_1} \cdots r_{s_{l_1}}$,
$r_{\mathbf{s}_2} = r_{s_{l_1+1}} \cdots r_{s_{l_1 + l_2}}$ and so forth.
  Note that $\ell$ is the
length of the reduced expression.

 \begin{definition} Let $v \in B(\infty)$ (resp.\ $v \in B(\lambda)$). An $\ell$-tuple $\mathbf{a}(v)=(a_1, \ldots, a_\ell) \in (\Z_{\ge0})^\ell$ is called the {\it adapted string} of $v$
with respect to the expression $r_{s_1} \cdots r_{s_\ell}$ if
$$a_1 = \ep_{s_1}(v),\ \   a_2 = \ep_{s_2}\left( \ke_{s_1}^{a_1} v\right), \ \ \ldots\,,\ \ a_\ell = \ep_{s_\ell}\left( \ke_{s_{\ell-1}}^{a_{\ell-1}} \cdots \ke_{s_{1}}^{a_{1}} v\right) .$$
We denote by $\mathcal{S} = \{ \mathbf{a}(v) \mid v \in B(\infty) \}$ (resp.\ $\mathcal{S}^\lambda = \{ \mathbf{a}(v) \mid v \in B(\lambda) \}$)
the set of all adapted strings of elements in $B(\infty)$ (resp.\ $B(\lambda)$).
\end{definition}

{\begin{table} [ht]
\caption{ }
\centering
\begin{tabular}{ c c l }
\hline
 & Dynkin diagrams   & $w_0 = r_{\mathbf{s}_1} \cdots r_{\mathbf{s}_n} $
\\ \hline \hline
 & &  \\
$(\mathsf{A}_n)$ &    \raisebox{0.5em}{\DynkinA}   & $ \mathbf{s}_k = (n+1-k, \ldots, n-1,n)$ \quad for $k=1,\ldots,n$    \\ [3ex]
\raisebox{-0.8em}{$(\mathsf{B}_n)$}  &    {\DynkinB}   & $ \mathbf{s}_k = (n+1-k, \ldots, n-1,n,n-1,\ldots,n+1-k)$    \\ [-0.5em]
        &               &   \qquad \ \  for $k=1,\ldots,n$    \\  [3ex]
{$(\mathsf{C}_n)$}  &    \raisebox{0.3em}{\DynkinC}   & $\mathbf{s}_k$ is the same as for type $\mathsf{B}_n$ for $k=1,\ldots,n$   \\ [3ex]
\raisebox{-1.4em}{$(\mathsf{D}_n)$}  &    \raisebox{-1em}{\DynkinD}   & $ \mathbf{s}_1 = (n)$, $ \mathbf{s}_2 = (n-1)$,    \\ [-2em]
          &                                & $ \mathbf{s}_k = (n+1-k,\ldots,n-2,n,n-1,n-2,\ldots,n+1-k)$   \\
          &                                &   \qquad \ \  for $k=3,\ldots,n$   \\ [3ex]
\raisebox{-1.0em}{$(\mathsf{E}_6)$} &    \raisebox{-1em} {\DynkinEs}   &   $\mathbf{s}_k$ is the same as for type $\mathsf{D}_5$  for $k=1,\ldots,5$,   \\ [-1.5em]
          &                                &  $ \mathbf{s}_6 = (6,5,3,4,2,1,3,2,5,3,4,6,5,3,2,1)$,   \\  [4ex]
\raisebox{-1.2em}{$(\mathsf{E}_7)$} &    \raisebox{-1.2em}{\DynkinEsv}   & $\mathbf{s}_k$ is the same as for type $\mathsf{E}_6$  for $k=1,\ldots,6$,    \\[-1.5em]
          &                                &   $ \mathbf{s}_7 = (7,6,5,3,4,2,1,3,2,5,3,4,6,5,3,2,1,7,6,5,3,4,2,$ \\
          &                                &  \qquad \ \ $3,5,6,7)$    \\ [3ex]
\raisebox{-2em}{$(\mathsf{E}_8)$} &    \raisebox{-2em}{\DynkinEe}   & $\mathbf{s}_k$ is the same as for type $\mathsf{E}_7$  for $k=1,\ldots,7$,   \\ [-2.5em]
          &                                &  $  \mathbf{s}_8 = (8,7,6,5,3,4,2,1,3,2,5,3,4,6,5,3,2,1,7,6,5,3,4,$   \\
          &                                & \qquad \ \ $2,3,5,6,7,8,7,6,5,3,4,2,1,3,2,5,3,4,6,5,3,2,1,$    \\
          &                                &  \qquad \ \ $7,6,5,3,4,2,3,5,6,7,8)$   \\[4ex]
\raisebox{-1em}{$(\mathsf{F}_4)$}  &   \raisebox{-0.5em}{\DynkinF}   &  $\mathbf{s}_k$ is the same as for type $\mathsf{B}_3$  for $k=1,\ldots,3$,  \\  [-1.0em]
          &                                &    $\mathbf{s}_4 = (4,3,2,1,3,2,3,4,3,2,1,3,2,3,4)$  \\ [4ex]
$(\mathsf{G}_2)$  &    \raisebox{0.5em}{\DynkinG}   &   $\mathbf{s}_1 = (1), \ \mathbf{s}_2 = (2,1,2,1,2)$  \\
 & &  \\ \hline
\end{tabular}
\label{Tb: w0}
\end{table}
 }

Observe that if $\iota_{\lambda}(v) = v' \otimes t_\lambda \otimes c$ for some $v\in B(\lambda) $ and $ v' \in B(\infty)$, then $\mathbf{a}(v) = \mathbf{a}(v')$.
It follows from \cite{Kash93a,Lit94} that $\mathcal{S}$ and $\mathcal{S}^\lambda$ are
in one-to-one correspondence with the crystals $B(\infty)$ and $B(\lambda)$, respectively,
by the maps
\begin{equation} \label{Eq: one to one}
 \begin{aligned}
\mathcal{S} &\buildrel \text{1-1} \over\longrightarrow B(\infty) \ \ \text{ given by } \ \  \mathbf{a}=(a_1, \ldots, a_\ell) \mapsto \kf^{a_1}_{s_1} \cdots \kf^{a_\ell}_{s_\ell}  \mathsf{1}, \\
 \mathcal{S}^\lambda &\buildrel \text{1-1} \over\longrightarrow B(\lambda) \ \ \ \text{ given by } \ \   \mathbf{a}=(a_1, \ldots, a_\ell) \mapsto \kf^{a_1}_{s_1} \cdots \kf^{a_\ell}_{s_\ell} b_\lambda.
 \end{aligned}
\end{equation}

For $\mathbf{a} = (a_1, \ldots, a_\ell) \in (\Z_{\ge 0})^\ell$,  set
\begin{align} \label{Eq: subseq of a}
\ad_{k,j} = a_{l_1 + \cdots + l_{k-1}+j} \quad \hbox{\rm and} \quad
\mathbf{a}_k = (\ad_{k,1}, \ad_{k,2}, \ldots,  \ad_{k,l_k}  ),
\end{align}
where $l_k$ is,  as above, the length of $\mathbf{s}_k$. Then  $\mathbf{a} = \mathbf{a_1} * \cdots * \mathbf{a_n}$.

In particular, for $v \in B(\infty)$,    we have $\mathbf{a}(v) = \mathbf{a}(v)_1 * \cdots * \mathbf{a}(v)_n$ and
$$v = \kf_{\mathbf{s}_1}^{\mathbf{a}(v)_1} \kf_{\mathbf{s}_2}^{\mathbf{a}(v)_2} \cdots \kf_{\mathbf{s}_n}^{\mathbf{a}(v)_n} \mathsf{1}.$$

\begin{proposition}[{\cite{Lit98}}]  \label{Prop: adapted strings}
 Let $(\mathfrak{A}, \mathsf{P}, \Pi, \mathsf{P}^{\vee}, \Pi^{\vee})$ be a Cartan datum of finite type.
 Then $\mathcal{S}$ has the following description:
\begin{align*}(\mathsf{A}_n): \ \   \{ \mathbf{a} \in (\Z_{\ge0})^{\ell} \mid & \  \ad_{i,1} \ge \ad_{i,2} \ge \cdots \ge \ad_{i,i}\  (1 \le i \le n) \}\\
(\mathsf{B}_n): \ \  \{ \mathbf{a} \in (\Z_{\ge0})^{\ell} \mid & \  2\ad_{i,1} \ge \cdots \ge 2\ad_{i,i-1} \ge \ad_{i,i} \ge 2\ad_{i,i+1} \ge \cdots \ge 2\ad_{i,2i-1}\ (1 \le i \le n) \}  \\
(\mathsf{C}_n): \ \ \{ \mathbf{a} \in (\Z_{\ge0})^{\ell} \mid & \  \ad_{i,1} \ge \cdots \ge \ad_{i,i-1} \ge \ad_{i,i} \ge \ad_{i,i+1} \ge \cdots \ge \ad_{i,2i-1}\  (1 \le i \le n) \}  \\
(\mathsf{D}_n): \ \  \{ \mathbf{a} \in (\Z_{\ge0})^{\ell} \mid & \  \ad_{i,1} \ge \cdots \ge \ad_{i,i-1}, \ad_{i,i} \ge \ad_{i,i+1} \ge \cdots \ge \ad_{i,2i-2}\  (3 \le i \le n) \}  \\
(\mathsf{E}_6): \ \  \{ \mathbf{a} \in (\Z_{\ge0})^{\ell} \mid &\ \ad_{6,1} \ge \ad_{6,2} \ge \ad_{6,3} \ge \ad_{6,4}, \ad_{6,5} \ge \ad_{6,7} \ge \ad_{6,8}, \ad_{6,9} \ge \ad_{6,10}
\ge \ad_{6,11},\ad_{6,13} \\
 &\ \ge \ad_{6,14} \ge \ad_{6,15} \ge \ad_{6,16};\ \ad_{6,5} \ge \ad_{6,6} \ge \ad_{6,8};\ \ad_{6,9} \ge \ad_{6,12} \ge \ad_{6,13}; \\
&\ \text{$\ad_{i,j}$'s satisfy the inequalities for $\mathsf{D}_5$ for $i=1,\ldots,5$} \}\\
 (\mathsf{E}_7): \ \  \{ \mathbf{a} \in (\Z_{\ge0})^{\ell} \mid &\ \ad_{7,1} \ge \ad_{7,2} \ge \ad_{7,3} \ge \ad_{7,4} \ge \ad_{7,5}, \ad_{7,6}  \ge \ad_{7,8} \ge \ad_{7,9}, \ad_{7,10}  \\
 &\ \ge \ad_{7,11} \ge \ad_{7,12},\ad_{7,14} \ge \ad_{7,15} \ge \ad_{7,16}, \ad_{7,20} \ge \ad_{7,21}\ge \ad_{7,22}, \ad_{7,23}  \\
 &\ \ge \ad_{7,24} \ge \ad_{7,25} \ge \ad_{7,26} \ge \ad_{7,27};\ \ad_{7,6} \ge \ad_{7,7}\ge \ad_{7,8} ;\ \\
 &\ \ad_{7,10} \ge \ad_{7,13}\ge \ad_{7,14}, \ad_{7,18} \ge \ad_{7,19} \ge \ad_{7,20};\ \ad_{7,16} \ge \ad_{7,17} \ge \ad_{7,23}; \\
&\ \text{$\ad_{i,j}$'s satisfy the inequalities for $\mathsf{E}_6$ for $i=1,\ldots,6$} \} \\
(\mathsf{E}_8): \ \  \{ \mathbf{a} \in (\Z_{\ge0})^{\ell} \mid &\ (\ad_{8,2}, \ldots, \ad_{8,28}) \ \text{and} \ (\ad_{8,30}, \ldots, \ad_{8,56}) \text{ satisfy the inequalities of $\mathbf{a}_7$ for $\mathsf{E}_7$};\\
&\hspace{-.4truein} \ad_{8,1} \ge \ad_{8,2};\ \ad_{8,29} \ge \ad_{8,30};\ \ad_{8,56} \ge \ad_{8,57};  \\
&\hspace{-.4truein} \ad_{8,19} \ge \mathsf{max}\{ \ad_{8,30}, \ad_{8,29} - \ad_{8,28} \};\  \mathsf{min}\{ \ad_{8,23}, \ad_{8,25} +\ad_{8,33} - \ad_{8,34} \} \ge \ad_{8,35}; \\
&\hspace{-.4truein}  \ad_{8,20} \ge \mathsf{max}\{ \ad_{8,31}, \ad_{8,28} + \ad_{8,30} - \ad_{8,27} \};\  \mathsf{min}\{ \ad_{8,25}, \ad_{8,26} +\ad_{8,32} - \ad_{8,33} \} \ge \ad_{8,37}; \\
&\hspace{-.4truein} \ad_{8,21} \ge \mathsf{max}\{ \ad_{8,32}, \ad_{8,27} + \ad_{8,31} - \ad_{8,26} \};\  \mathsf{min}\{ \ad_{8,26}, \ad_{8,27} +\ad_{8,31} - \ad_{8,32} \} \ge \ad_{8,39}; \\
&\hspace{-.4truein} \ad_{8,22} \ge  \mathsf{max}\{ \ad_{8,33}, \ad_{8,26} + \ad_{8,32} - \ad_{8,25} \};\  \mathsf{min}\{ \ad_{8,27}, \ad_{8,28} +\ad_{8,30} - \ad_{8,31} \} \ge \ad_{8,42}; \\
&\hspace{-.4truein}  \ad_{8,24} \ge  \mathsf{max}\{ \ad_{8,34}, \ad_{8,25} + \ad_{8,33} - \ad_{8,23} \};\  \mathsf{min}\{ \ad_{8,28}, \ad_{8,29} - \ad_{8,30} \} \ge \ad_{8,47};   \\
&\hspace{-.4truein} \text{$\ad_{i,j}$'s satisfy the inequalities for $\mathsf{E}_7$ for $i=1,\ldots,7$} \}  \allowdisplaybreaks  \\
(\mathsf{F}_4): \ \  \{ \mathbf{a} \in (\Z_{\ge0})^{\ell} \mid &\ \ad_{4,1} \ge \ad_{4,2} \ge \ad_{4,3} \ge \ad_{4,4}, \ad_{4,5} \ge \ad_{4,6} \ge \ad_{4,7}; \\
 &\ \ad_{4,9} \ge \ad_{4,10} \ge \ad_{4,11},\ad_{4,12}  \ge \ad_{4,13} \ge \ad_{4,14} \ge \ad_{4,15}; \\
&\ \ad_{4,5} \ge \ad_{4,9};\ \ad_{4,7} \ge \ad_{4,12};\ \ad_{4,5} + \ad_{4,7} \ge \ad_{4,8} \ge \ad_{4,9} + \ad_{4,12};\\
&\  2\ad_{4,6} \ge \ad_{4,7} + \ad_{4,9} \ge 2\ad_{4,10}; \\
&\ \text{$\ad_{i,j}$'s satisfy the inequalities for $\mathsf{B}_3$ for $i=1, 2, 3$} \}   \\
(\mathsf{G}_2): \ \  \{ \mathbf{a} \in (\Z_{\ge0})^{\ell} \mid &\ 6\ad_{2,1} \ge 2\ad_{2,2} \ge 3\ad_{2,3} \ge 2\ad_{2,4} \ge 6\ad_{2,5} \}.
\end{align*}  \end{proposition}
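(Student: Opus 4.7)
The plan is to argue by induction on the rank $n$, using crucially that each reduced expression in Table \ref{Tb: w0} is chosen so that the initial segment $r_{\mathbf{s}_1}\cdots r_{\mathbf{s}_{n-1}}$ is a reduced expression for the longest element of a lower-rank parabolic (of the same type in the infinite families, or of a previously handled type in the exceptional cases). Under this hypothesis, the conditions on $\mathbf{a}_1,\ldots,\mathbf{a}_{n-1}$ are immediate, and the real work is to establish the conditions involving the last block $\mathbf{a}_n$ (and any coupling inequalities between $\mathbf{a}_n$ and earlier blocks, as occurs in $\mathsf{E}_6, \mathsf{E}_7, \mathsf{E}_8, \mathsf{F}_4$).

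For necessity, I would use the identity $\ad_{k,j} = \ep_{s}\bigl(\ke_{s_{j-1}}^{\ad_{k,j-1}}\cdots \ke_{s_1}^{\ad_{1,1}} v\bigr) \ge 0$ at each position, where $s$ is the appropriate simple root index, together with the Kashiwara-level consequences of the braid relations that actually appear in the chosen reduced expression. For $\mathsf{A}_n$ the commuting braid $\ke_i \ke_j = \ke_j \ke_i$ (for $|i-j| \ge 2$) and the length-three braid force the chain $\ad_{n,1} \ge \ad_{n,2} \ge \cdots \ge \ad_{n,n}$ by a direct computation of $\ep_{s_{j+1}}$ at each stage; for $\mathsf{B}_n$ and $\mathsf{C}_n$ the analogous analysis together with the identities $\ep_i(\ke_j^a v) = \ep_i(v) + a\langle h_j,\alpha_i\rangle$-type bounds produce the factors of $2$ (these reflect the short/long-root asymmetry); for $\mathsf{D}_n$ one gets two independent descending chains joined at the fork. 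The exceptional types are handled by the same principle, but the braid relations are more intricate and lead to the min/max expressions in the statement.

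For sufficiency, I would invoke the bijection of \eqref{Eq: one to one}: the assignment $\mathbf{a} \mapsto \kf_{s_1}^{a_1}\cdots \kf_{s_\ell}^{a_\ell}\mathsf{1}$ carries any cone into a subset of $B(\infty)$, and it suffices to check that for the cone cut out by the stated inequalities the resulting map is surjective with the correct fiber structure. A clean way to do this is to compare weight-space generating functions: on the one hand the character of $B(\infty)$ is known (a product formula indexed by positive roots); on the other hand, for the stated cone, the generating function $\sum_{\mathbf{a}}q^{-\sum a_j \alpha_{s_j}}$ can be computed combinatorially (as an iterated geometric series in each chain and fork). Matching these two expressions, together with the necessity direction already established, gives the bijection.

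The main obstacle is undoubtedly the exceptional types $\mathsf{E}_7, \mathsf{E}_8$, where the cone is not simplicial and the extra min/max inequalities cannot be predicted from the braid pattern alone. Here I would appeal to Littelmann's general framework in \cite{Lit98}, which reduces the problem to verifying a bounded collection of inequalities for specific low-weight elements of $B(\infty)$ and then invoking a convexity/tropicalization argument to propagate these local constraints into global cone conditions. In practice this verification is a finite but intricate computation; for the purposes of our application it is enough to quote this description and use only the structural consequences that $\mathcal{S}$ is a saturated set of lattice points, which is all that will be needed later to define the modules $\Delta(\mathbf{a}(v);k)$.
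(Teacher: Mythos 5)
The paper offers no proof of this proposition: it is quoted verbatim from Littelmann \cite{Lit98}, where the adapted--string (``string cone'') inequalities for each of the chosen reduced words for $w_0$ are established, and the authors treat the description as a black box. There is therefore no proof in the paper to compare yours against; what you have written is a sketch of how Littelmann's result might be reconstructed.

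Your outline is plausible at the level of strategy, and the logical skeleton is sound: if one lets $\mathcal{C}$ be the cone cut out by the stated inequalities, proves $\mathcal{S}\subseteq\mathcal{C}$ (necessity), and then matches the weight generating function of $\mathcal{C}$ against the character of $B(\infty)$, one does get $\mathcal{S}=\mathcal{C}$, since $\mathcal{S}$ is in weight--preserving bijection with $B(\infty)$ via \eqref{Eq: one to one}. The rank induction via the parabolic structure of Table \ref{Tb: w0} is also in the right spirit: each initial segment $r_{\mathbf{s}_1}\cdots r_{\mathbf{s}_{n-1}}$ is a reduced word for the longest element of a corank--one parabolic. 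But there are two genuine gaps. First, the claim that the coupling inequalities are ``forced by the braid relations'' is not an argument. What is actually required is a precise tracking of how $\varepsilon_{s_{j+1}}$ changes under the successive operators $\tilde e_{s_j}^{a_j}$, using the crystal combinatorics of Littelmann's one--node reductions; in particular the factor--of--$2$ inequalities in type $\mathsf{B}$, the fork in type $\mathsf{D}$, and especially the $\min/\max$ inequalities in types $\mathsf{E}_8$ and $\mathsf{F}_4$ are not formal consequences of braid moves and would each need an explicit, case--specific verification. Second, and more fundamentally, your final paragraph handles the hardest cases ($\mathsf{E}_7$, $\mathsf{E}_8$) by ``appealing to Littelmann's general framework in \cite{Lit98}'' --- but that reference is precisely the theorem you are trying to prove, so this step is circular. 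If your intent is to reprove the proposition, the exceptional cases cannot be deferred to the source; if your intent is (as the paper does) to cite the description and use it as input downstream, then the preceding partial arguments are unnecessary. The generating--function computation for the non--simplicial exceptional cones is also harder than you suggest, but that is secondary next to the circularity.
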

We now give a description of $\mathcal{N}_k(v)$ using Proposition \ref{Prop: adapted strings}.

\begin{proposition} \label{Prop: description of Nk}
Let $(\mathfrak{A}, \mathsf{P}, \Pi, \mathsf{P}^{\vee}, \Pi^{\vee})$ be a Cartan datum of finite type. For $v \in B(\infty)$ and $k=1,\ldots, n$, the module
$\mathcal{N}_k(v)$ defined by $\eqref{Eq: def of Nk}$ is given by
$$ \mathcal{N}_k(v) = \kf_{\mathbf{s}_k} ^{\mathbf{a}(v)_k} \mathbf{1},  $$
where $\mathbf{a}(v)$ is the adapted string of $v$ with respect to the expression $w_0 = r_{\mathbf{s}_1} \cdots r_{\mathbf{s}_n}$ in Table $\ref{Tb: w0}$.
\end{proposition}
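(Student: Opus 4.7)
The plan is to identify, for each $v \in B(\infty)$ with adapted string $\mathbf{a}(v)=\mathbf{a}(v)_1 \ast \cdots \ast \mathbf{a}(v)_n$, the element
$$u_k' \;:=\; \kf_{\mathbf{s}_{k+1}}^{\mathbf{a}(v)_{k+1}} \cdots \kf_{\mathbf{s}_n}^{\mathbf{a}(v)_n}\, \mathsf{1}$$
with the highest weight vector $u_k$ of the $\mathfrak{B}_k$-connected component of $v$. The correspondence $\eqref{Eq: one to one}$ yields $v = \kf_{\mathbf{s}_1}^{\mathbf{a}(v)_1} \cdots \kf_{\mathbf{s}_k}^{\mathbf{a}(v)_k} u_k'$, so once $u_k'=u_k$ is established one may take the sequence $\mathbf{i}_k$ in the definition \eqref{Eq: def of Nk} of $\mathcal{N}_k(v)$ to be $\mathbf{s}_k$ with exponents $\mathbf{a}(v)_k$, giving $\mathcal{N}_k(v)=\kf_{\mathbf{s}_k}^{\mathbf{a}(v)_k}\mathbf{1}$.

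To prove $u_k=u_k'$, I first verify that the reduced expression in Table \ref{Tb: w0} is compatible with the flag $I_{(1)} \subset \cdots \subset I_{(n)}=I$ in the strong sense that, for every $k$, the prefix $r_{\mathbf{s}_1}\cdots r_{\mathbf{s}_k}$ is a reduced expression of the longest element $w_0^{W_k}$ of the Weyl group $W_k$ of the Levi subsystem on $I_{(k)}$. This is a direct case-by-case inspection of the table: in each row one reads off the natural nested flag (for instance, $I_{(k)}=\{n+1-k,\ldots,n\}$ in types $\mathsf{A}_n,\mathsf{B}_n,\mathsf{C}_n,\mathsf{F}_4,\mathsf{G}_2$, with the appropriate bookkeeping at the forked node in $\mathsf{D}_n$ and at the trivalent node in the $\mathsf{E}$-types), one sees that the indices appearing in $\mathbf{s}_1,\ldots,\mathbf{s}_k$ all lie in $I_{(k)}$, and one checks that $l_1+\cdots+l_k$ equals the number of positive roots of $W_k$. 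An immediate consequence is that $u_k'$ lies in the $\mathfrak{B}_k$-connected component of $v$, since the expression $v=\kf_{\mathbf{s}_1}^{\mathbf{a}(v)_1}\cdots \kf_{\mathbf{s}_k}^{\mathbf{a}(v)_k}\, u_k'$ involves only Kashiwara operators indexed by $I_{(k)}$.

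The central step is then to show that $u_k'$ is a $W_k$-highest weight vector, i.e.\ $\varepsilon_i(u_k')=0$ for every $i\in I_{(k)}$. Setting $m_k:=l_1+\cdots+l_k$, we may rewrite $u_k'=\ke_{s_{m_k}}^{a_{m_k}}\cdots \ke_{s_1}^{a_1} v$. Since $r_{s_1}\cdots r_{s_{m_k}}$ is a reduced expression of $w_0^{W_k}$, Littelmann's adapted-string theorem in \cite{Lit98} implies that $(a_1,\ldots,a_{m_k})$ is simultaneously the adapted string of $v$ viewed as an element of the $U_q(\g_k)$-crystal structure on $B(\infty)$, and that applying the $\ke$'s along a reduced expression of the longest element of $W_k$ necessarily lands at the unique $W_k$-highest weight vector in the $W_k$-component containing $v$. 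This is also transparent from Proposition \ref{Prop: adapted strings}: the defining inequalities on $\mathcal{S}$ decouple block-by-block, and the inequalities governing $\mathbf{a}_k$ are exactly the ones characterizing adapted strings for the rank-$k$ Levi subsystem. Combined with the previous paragraph, this gives $u_k=u_k'$, and the proposition follows. The main obstacle is precisely this last step: concluding that $u_k'$ is annihilated by \emph{every} $\ke_i$ with $i\in I_{(k)}$ (rather than only by $\ke_{s_{m_k}}$, which is automatic from the adapted-string definition) is exactly where the Levi compatibility and the particular choice of reduced expression in Table \ref{Tb: w0} are used in an essential way.
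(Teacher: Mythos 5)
Your proposal correctly identifies the crux: it suffices to show that $u_k' := \kf_{\mathbf{s}_{k+1}}^{\mathbf{a}(v)_{k+1}}\cdots\kf_{\mathbf{s}_n}^{\mathbf{a}(v)_n}\mathsf{1}$ satisfies $\varepsilon_i(u_k')=0$ for every $i\in I_{(k)}$, and you see that the all-zero leading entries and the fact that $\mathbf{s}_1 * \cdots * \mathbf{s}_k$ uses exactly the indices of $I_{(k)}$ is what makes this work. You also reach the right conclusion.

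The route is, however, somewhat different from the paper's, and your primary argument has a gap that your secondary argument (rather than repairs) papers over. Your main justification invokes ``Littelmann's adapted-string theorem'' to assert that applying $\ke$'s along a reduced word for $w_0^{W_k}$ always lands at the $W_k$-highest-weight vector. That statement is proved in \cite{Lit98} for $B(\infty)$ and $B(\lambda)$ of the ambient type $\g$, not for the $I_{(k)}$-restriction $\mathfrak{B}_k$, whose connected components are not of the form $B_k(\mu)$ (a single $i$-string in $B(\infty)$ is already infinite, so a rank-one Levi component is $B_{\mathfrak{sl}_2}(\infty)$, not a finite string). Making this Levi argument rigorous requires the same truncation/embedding device $B(\lambda)\hookrightarrow B(\infty)\otimes T_\lambda\otimes C$ with $\lambda\gg 0$ that the paper uses to establish uniqueness of the highest-weight vector of $\mathcal{C}_k$, and you do not supply it. The paper avoids the issue entirely: because the inequalities in Proposition \ref{Prop: adapted strings} decouple block by block, the string $\mathbf{b}_k := \mathbf{0}_1 * \cdots * \mathbf{0}_k * \mathbf{a}(v)_{k+1} * \cdots * \mathbf{a}(v)_n$ lies in $\mathcal{S}$, hence by the bijection \eqref{Eq: one to one} it is the adapted string of $u_k'$; reading off the definition of adapted strings entry by entry then gives $\varepsilon_{s_1}(u_k')=\varepsilon_{s_2}(u_k')=\cdots=\varepsilon_{s_{m_k}}(u_k')=0$ directly, with no reference to Levi longest elements. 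Your closing remark that ``this is also transparent from Proposition \ref{Prop: adapted strings}'' gestures at this, but you frame the block decoupling as saying that $\mathbf{a}_k$ satisfies the rank-$k$ Levi inequalities, which is not the step being used; the relevant observation is that replacing $\mathbf{a}_1,\dots,\mathbf{a}_k$ by zeros keeps the string inside $\mathcal{S}$. I recommend replacing the Levi/Littelmann argument by the $\mathbf{b}_k\in\mathcal{S}$ argument as your primary justification, which both closes the gap and removes the need to verify that the prefixes $r_{\mathbf{s}_1}\cdots r_{\mathbf{s}_k}$ are reduced words for $w_0^{W_k}$.
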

\begin{proof}
Let $I_{(k)} = \{ n+1-k, \ldots, n-1,n \}$ for $k=1,\ldots, n$ and set $I_{(0)} = \emptyset$. Let $\mathfrak{B}_k$ denote
the crystal obtained from $B(\infty)$ by forgetting the $i$-arrows with $i \notin I_{(k)} $.
Take $v \in B(\infty)$ and let $\mathbf{a} = \mathbf{a}(v) = \mathbf{a}_1 * \cdots * \mathbf{a}_n$ be the adapted string of $v$ with respect to the expression $w_0 = r_{\mathbf{s}_1} \cdots r_{\mathbf{s}_n}$  in Table $\ref{Tb: w0}$.
For $k=1,\ldots, n$, let
$ \mathbf{0}_k = (\underbrace{0,\ldots, 0}_{l_k}) $
where $l_k$ is the length of $\mathbf{s}_k$ as before.
Then, by Proposition \ref{Prop: adapted strings},
$$ \mathbf{b}_k :=  \mathbf{0}_1 *  \ldots *  \mathbf{0}_{k} * \mathbf{a}_{k+1} * \ldots *  \mathbf{a}_{n-1} * \mathbf{a}_n $$
is contained in $\mathcal{S}$.
Let $u_k = \kf_{\mathbf{s}_{k+1}}^{\mathbf{a}_{k+1}} \cdots \kf_{\mathbf{s}_{n}}^{\mathbf{a}_{n}} \mathsf{1}$ for $k=0,\ldots, n-1$ and let $u_n = \mathsf{1}$.
Since $\mathbf{b}_k $ is the adapted string of $u_k$, by the definition of adapted strings we have
$$ v = \kf_{\mathbf{s}_{1}}^{\mathbf{a}_{1}} \cdots \kf_{\mathbf{s}_{k}}^{\mathbf{a}_{k}} u_k , \qquad  \varepsilon_i(u_k)=0 \ \text{ for }i \in I_{(k)}, $$
which implies that $u_k$ is the highest weight vector of the connected component $\mathcal{C}_k$ of $\mathfrak{B}_k$ containing $v$.
Therefore,  $\mathcal{N}_k(v) = \kf_{\mathbf{s}_k} ^{\mathbf{a}_k} \mathbf{1}$ for $k=1,\ldots,n$.
\end{proof} \medskip

\section{Explicit Construction of Irreducible Modules for Finite Classical Type} \label{Sec: construction for classical type}

In Sections \ref{Sec: construction for classical type} and \ref{Sec:
final}, we  assume that $\mathfrak{A}$ is of finite classical type.
We maintain the notation from Sections \ref{Sec: Crystals and KLR
algs} and \ref{Sec: Description Nk}.  Our aim in this section is to
present an explicit construction of irreducible $R$-modules (resp.\
$R^\lambda$-modules) for finite classical type using a single
induction step.   Let $\cB$ be the crystal of the irreducible
highest weight $U_q(\g)$-module $V(\Lambda_n)$ if $\mathfrak{A}$ is
of type $\mathsf{A}_n$ and of $V(\Lambda_1)$ if $\mathfrak{A}$ is of
type $\mathsf{B}_n, \mathsf{C}_n, \mathsf{D}_n$. We define the $1$
or $2$-dimensional irreducible $R$-module $\Delta_{(a,b)}$ for $a,b
\in \cB$ with $a \succ b$ by using the structure of $\cB$. Combining
Proposition \ref{Prop: crystal iso} with some facts about the
modules $\Delta_{(a,b)}$ and the descriptions of the adapted
strings, we define the outer tensor product $\Delta(\mathbf{a}(v)):=
\Delta(\mathbf{a}(v);1) \boxtimes \cdots \boxtimes
\Delta(\mathbf{a}(v);n)$ of $\Delta_{(a,b)}$'s for $v \in B(\infty)$
(resp.\ $v \in B(\lambda)$). For $v \in B(\lambda)$, the number
$\eta(v)$ of $\Delta_{(a,b)}$'s in $\Delta(\mathbf{a}(v))$ has an
upper bound; i.e., $ \eta(v) \le n \lambda(h), $ where $h$  is as in
Lemma \ref{Lem: upper bound}  below. Then, we construct a map $
\Psi: B(\infty) \rightarrow \B(\infty)$ (resp.\ $\Psi^{\lambda}:
B(\lambda) \rightarrow  \B(\lambda)$) by taking the head of $\ind
\Delta(\mathbf{a}(v))$ (Theorem \ref{Thm: main theorem}).
The proof that this map is indeed a
crystal isomorphism, hence is compatible with the Kashiwara
operators,  will be provided in the next section.

We first give a detailed description of $\mathcal{S}$ and
$\mathcal{S}^\lambda$ for finite classical type with respect to the
expression of $w_0 =  r_{\mathbf{s}_1} \cdots r_{\mathbf{s}_n}$ in
Table \ref{Tb: w0}. Let $\triangle_{\mathsf{A}_n}$ be the triangle
consisting of right justified rows  of boxes with $1$ box in the
first (bottom) row, $2$ boxes in the second row, $\ldots$, and $n$
boxes in the top row.  Let $\triangle_{\mathsf{B}_n}$ (resp.\
$\triangle_{\mathsf{D}_n}$) be the triangle consisting of centered
rows of boxes having $1$ box (resp.\ $2$ boxes) in the first row,
$3$ boxes (resp.\ $4$ boxes) in the second row, \ldots , and
$(2n-1)$ boxes (resp.\ $(2n-2)$ boxes) in the top row.    Set
$\triangle_{\mathsf{C}_n} = \triangle_{\mathsf{B}_n}$. When it is
not necessary to specify the type, we omit the subscript and simply
write $\triangle$.  For $\mathbf{a} \in \mathcal{S}$, let
$\triangle(\mathbf{a})$ denote the filling of the triangle
$\triangle$ with entries of $\mathbf{a}$ from left to right in each
row, and from bottom to top. Let $t_{ij}$ be the $j$th entry of the
$i$th row in $\triangle(\mathbf{a})$ and let $\triangle(\mathbf{a})
= \{t_{ij}\}_{ 1 \le i \le n' ,\ p_i \le j \le p_i'}$,  where
\begin{eqnarray*} n' &=& \begin{cases} n & \qquad \quad  (\mathsf{A}_n, \mathsf{B}_n, \mathsf{C}_n) \\
 n-1 & \quad  \qquad (\mathsf{D}_n) \end{cases} \\
 p_i &=& \begin{cases}  n+1-i & \qquad  (\mathsf{A}_n, \mathsf{B}_n, \mathsf{C}_n),\\
  n-i & \qquad (\mathsf{D}_n) \end{cases} \\
  p_i' &=& \begin{cases} n & \qquad (\mathsf{A}_n), \\
   n-1+i & \qquad  (\mathsf{B}_n, \mathsf{C}_n, \mathsf{D}_n). \end{cases} \end{eqnarray*}
Set $t_{ij}=0$ except when $1 \le i \le n' ,\ p_i \le j \le p_i'$.
For example, if $\mathbf{a}=(2,3,1,0,9,8,4,2,1) \in \mathcal{S}$ for type $\mathsf{B}_3$ and $\mathbf{a}'=(5,2,7,4,3,1,9,6,4,5,3,2) \in \mathcal{S}$ for type $\mathsf{D}_4$, then
$t_{1,3}=2$ in $\triangle(\mathbf{a})$, $t_{3,4}=5$ in $\triangle(\mathbf{a}')$ and
$$\triangle(\mathbf{a}) =  \exTriangleB, \quad \quad \ \triangle(\mathbf{a}') =  \exTriangleD. $$

Let $\tilde{\mathsf{c}}(t_{i,n-1}) = \sum_{k=i}^{n-1} t_{k,n-1}$, \ $ \tilde{\mathsf{c}}(t_{i,n}) = \sum_{k=i}^{n-1} t_{k,n}$,  and
\begin{align}\label{eq:cdef}
\mathsf{c}(t_{i,j}) = \left\{
                      \begin{array}{ll}
                        \sum_{k=i}^n t_{k,j} & \hbox{if } j \le n\ (\mathsf{A}_n),\ \  j = n\ (\mathsf{B}_n), \\
                        \sum_{k=i}^n(t_{k,j} + t_{k,2n-j}) & \hbox{if } j < n\ (\mathsf{B}_n, \mathsf{C}_n), \\
                        \sum_{k=i}^{n-1} (t_{k,j} + t_{k,2n-1-j}) & \hbox{if } j < n-1\ (\mathsf{D}_n), \\
                        \sum_{k=i}^n 2t_{k,j} & \hbox{if } j = n\ (\mathsf{C}_n), \\
                        \sum_{k=i}^{n-1} (t_{k,n-1}+t_{k,n}) & \hbox{if } j=n-1,n\  (\mathsf{D}_n), \\
                        t_{i,j} + \sum_{k=i+1}^{n}(t_{k,2n-j}+t_{k,j}) & \hbox{if } j > n\ (\mathsf{B}_n, \mathsf{C}_n),\\
                        t_{i,j} + \sum_{k=i+1}^{n-1}(t_{k,2n-1-j}+t_{k,j}) & \hbox{if } j > n\  (\mathsf{D}_n).
                      \end{array}
                    \right.
\end{align}

\begin{proposition}[{\cite{Lit98}}]  \label{Prop: S lambda}
Let $(\mathfrak{A}, \mathsf{P}, \Pi, \mathsf{P}^{\vee}, \Pi^{\vee})$ be a Cartan datum of finite classical type,
and let $\lambda = \lambda_1 \Lambda_1 + \cdots + \lambda_n \Lambda_n \in \mathsf{P}^+$.
For $\mathbf{a} \in (\Z_{\ge0})^{\ell}$,  let $\triangle(\mathbf{a}) = \{t_{ij}\}_{ 1 \le i \le n' ,\ p_i \le j \le p_i'}$ be as above
and assume $\mathsf{c}(t_{i,j})$ is as in \eqref{eq:cdef}.   Then
\begin{align*}
(\mathsf{A}_n):\  \mathcal{S} & = \{ \mathbf{a} \in (\Z_{\ge0})^{\ell} \mid  t_{i,n+1-i}  \ge t_{i,n+2-i}  \ge \cdots \ge t_{i,n}\ \text{ for }1 \le i \le n \}, \\
\mathcal{S}^\lambda & = \{ \mathbf{a} \in \mathcal{S} \mid  t_{i,j} \le \lambda_j + \mathsf{c}(t_{i+1,j-1})   -2\mathsf{c}(t_{i+1,j})+  \mathsf{c}(t_{i,j+1})\\
& \qquad  \qquad \qquad \qquad  \qquad \qquad \qquad  \qquad \qquad   \text{ for }
1 \le i\le n ,\ n+1-i \le j\le n \}  \\
(\mathsf{B}_n): \ \mathcal{S} & =  \{ \mathbf{a} \in (\Z_{\ge0})^{\ell} \mid
2t_{i,n+1-i} \ge \ldots \ge 2t_{i,n-1} \ge t_{i,n} \ge 2t_{i,n+1} \ge  \ldots  \ge 2t_{i,n-1+i} \\
& \qquad  \qquad \qquad \qquad \qquad \qquad \qquad \qquad \qquad \qquad \qquad \qquad
\text{ for } 1 \le i\le n \}, \\
\mathcal{S}^\lambda & = \{ \mathbf{a} \in \mathcal{S} \mid \  t_{i,j} \le \lambda_j + \mathsf{c}(t_{i,j+1}) -2\mathsf{c}(t_{i,2n-j})+\mathsf{c}(t_{i,2n+1-j}), \\
& \qquad  \qquad
t_{i,2n-j} \le \lambda_j + \mathsf{c}(t_{i+1,j+1}) -2\mathsf{c}(t_{i+1,j})+ \mathsf{c}(t_{i,2n+1-j}), \\
& \qquad  \qquad
t_{i,n} \le \lambda_n + 2 \mathsf{c}(t_{i,n+1})-2 \mathsf{c}(t_{i+1,n})\ \text{ for }1\le i \le n,\ n+1-i \le j < n  \}  \\
(\mathsf{C}_n):\ \mathcal{S} & = \{ \mathbf{a} \in (\Z_{\ge0})^{\ell} \mid t_{i,n+1-i} \ge \ldots  \ge t_{i,n} \ge  \ldots \ge t_{i,n-1+i} \ \text{ for } 1 \le i \le n \}\\
\mathcal{S}^\lambda & = \{ \mathbf{a} \in \mathcal{S} \mid \  t_{i,j} \le \lambda_j + \mathsf{c}(t_{i,j+1})-2\mathsf{c}(t_{i,2n-j})+\mathsf{c}(t_{i,2n+1-j}), \\
& \qquad \qquad
t_{i,2n-j} \le \lambda_j + \mathsf{c}(t_{i+1,j+1}) -2\mathsf{c}(t_{i+1,j})+ \mathsf{c}(t_{i,2n+1-j}), \\
& \qquad \qquad
t_{i,n} \le \lambda_n + \mathsf{c}(t_{i,n+1})-\mathsf{c}(t_{i+1,n})\ \text{ for } 1 \le i \le n,\ n+1-i \le j < n \}  \allowdisplaybreaks \\
(\mathsf{D}_n):\ \mathcal{S} &= \{ \mathbf{a} \in (\Z_{\ge0})^{\ell} \mid t_{i,n-i} \ge \ldots  \ge t_{i,n-2} \ge
  t_{i,n-1}, t_{i,n} \ge t_{i,n+1} \ge  \ldots  \ge t_{i,n-1+i} \\
& \qquad  \qquad  \qquad \qquad \qquad \qquad \qquad \qquad \qquad \qquad \qquad \qquad
\text{ for } 1 \le i \le n-1 \}, \\
\mathcal{S}^\lambda & = \{ \mathbf{a} \in \mathcal{S} \mid \  t_{i,j} \le \lambda_j + \mathsf{c}(t_{i,j+1})-2\mathsf{c}(t_{i,2n-1-j})+\mathsf{c}(t_{i,2n-j}), \\
& \quad \quad
t_{i,2n-1-j} \le \lambda_j + \mathsf{c}(t_{i+1,j+1}) -2\mathsf{c}(t_{i+1,j})+ \mathsf{c}(t_{i,2n-j}), \\
& \quad \quad
t_{i,n-1} \le \lambda_{n-1} + \mathsf{c}(t_{i,n+1})-2\tilde{\mathsf{c}}(t_{i+1,n-1}),\\
& \quad \quad
t_{i,n} \le \lambda_n + \mathsf{c}(t_{i,n+1})-2\tilde{\mathsf{c}}(t_{i+1,n})\ \text{ for }1 \le i \le n-1,\ n-i \le j< n-1 \}.
\end{align*}
\end{proposition}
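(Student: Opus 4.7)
Since the proposition is attributed to [Lit98], the plan is to reconstruct Littelmann's result in the triangle-filling notation used throughout the paper. The proof splits naturally into two parts: a direct reformulation of Proposition \ref{Prop: adapted strings} for $\mathcal{S}$, followed by a derivation of the $\lambda$-dependent inequalities for $\mathcal{S}^\lambda$ from the crystal embedding $\iota_\lambda$.

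For $\mathcal{S}$, the work is purely combinatorial. For each classical type, I would place the $k$-th block $\mathbf{a}_k = (a_{k,1}, \ldots, a_{k,l_k})$ of an adapted string into row $k$ of $\triangle$ via $a_{k,j} = t_{k, p_k + j - 1}$. Under this placement, the sequence of inequalities in Proposition \ref{Prop: adapted strings} translates verbatim into the stated row-wise chain for each type. For instance, the palindromic shape $\mathbf{s}_k = (n+1-k, \ldots, n, \ldots, n+1-k)$ in type $\mathsf{B}_n$ becomes the zigzag $2t_{k,n+1-k} \geq \cdots \geq 2t_{k,n-1} \geq t_{k,n} \geq 2t_{k,n+1} \geq \cdots \geq 2t_{k,n-1+k}$. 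Type $\mathsf{D}_n$ requires extra care at the forked tail, where the two branching simple roots $\alpha_{n-1}, \alpha_n$ sit at incomparable positions; this is why the row inequality splits into two chains meeting at $t_{i,n-1}, t_{i,n}$.

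For $\mathcal{S}^\lambda$, I would use the embedding $\iota_\lambda : B(\lambda) \hookrightarrow B(\infty) \otimes T_\lambda \otimes C$ from \eqref{Eq: crystal embedding}. Given $v \in B(\infty)$ with adapted string $\mathbf{a}(v)$, membership in $\mathcal{S}^\lambda$ is equivalent to $\kf_{s_1}^{a_1} \cdots \kf_{s_\ell}^{a_\ell} b_\lambda \neq 0$ in $B(\lambda)$, by \eqref{Eq: one to one}. Applying these operators from right to left and using the tensor-product rule for crystals, this reduces to the family of inequalities $a_k \le \ph_{s_k}\!\left( \kf_{s_{k+1}}^{a_{k+1}} \cdots \kf_{s_\ell}^{a_\ell} b_\lambda \right)$ for all $k$. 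Expanding $\ph_{s_k} = \ep_{s_k} + \langle h_{s_k}, \wt\rangle$, the right-hand side equals $\langle h_{s_k}, \lambda\rangle$ modified by $-\sum_{m>k} a_m \langle h_{s_k}, \alpha_{s_m}\rangle$, the accumulated Cartan-matrix contribution from previous operators, plus an $\ep$-correction that can be computed from the adapted string itself. Grouping the entries $a_m$ according to their row position in the triangle yields precisely the combinations $\mathsf{c}(t_{i,j})$ and $\tilde{\mathsf{c}}(t_{i,j})$ from \eqref{eq:cdef}, and the off-diagonal Cartan-matrix entries turn them into the stated expressions such as $\mathsf{c}(t_{i,j+1}) - 2\mathsf{c}(t_{i,2n-j}) + \mathsf{c}(t_{i,2n+1-j})$.

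The principal obstacle is the case-by-case bookkeeping. One must verify that for each classical type, the specific sequence $\mathbf{s}_k$ chosen in Table \ref{Tb: w0} makes the entries contributing to $\ph_{s_k}$ partition cleanly into the sums $\mathsf{c}$ and $\tilde{\mathsf{c}}$ of \eqref{eq:cdef}. This requires tracking the reflection of indices across the palindrome point of $\mathbf{s}_k$ in types $\mathsf{B}_n, \mathsf{C}_n$, the separate treatment of the short simple root (hence the coefficient $2$ in front of $\mathsf{c}(t_{i,n+1})$ for type $\mathsf{B}_n$), and, for $\mathsf{D}_n$, the special handling of the branching indices $n-1, n$ where the chain structure breaks and $\tilde{\mathsf{c}}$ replaces $\mathsf{c}$. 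Once the indexing is pinned down for each type, each stated inequality follows by a direct evaluation of $\ph_{s_k}$ in terms of the entries $a_{ij}$ of the Cartan matrix.
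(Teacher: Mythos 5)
The paper does not prove this proposition; it is quoted verbatim from Littelmann \cite{Lit98}, so there is no internal argument to compare your proposal against. Your sketch is nevertheless worth assessing on its own terms.

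The part of your proposal concerning $\mathcal{S}$ is fine: identifying $\mathsf{a}_{k,j}$ with $t_{k,p_k+j-1}$ really does turn Proposition \ref{Prop: adapted strings} into the row-wise chains displayed here, and this is essentially a re-indexing with no new content.

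The part concerning $\mathcal{S}^\lambda$ has the right skeleton but leaves out exactly the difficult computation, and in a way that could mislead. It is correct that $\mathbf{a}\in\mathcal{S}$ lies in $\mathcal{S}^\lambda$ iff, at every step $k$, $a_k \le \varphi_{s_k}\bigl(\kf_{s_{k+1}}^{a_{k+1}}\cdots\kf_{s_\ell}^{a_\ell}b_\lambda\bigr)$; this follows from \eqref{Eq: crystal embedding} and the tensor-product rule applied to $B(\infty)\otimes T_\lambda\otimes C$, exactly as you describe. Writing $\varphi_{s_k}=\varepsilon_{s_k}+\langle h_{s_k},\wt(\cdot)\rangle$ is also fine, and the $\langle h_{s_k},\wt\rangle$ term is indeed a transparent Cartan-matrix sum. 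What is not transparent is $\varepsilon_{s_k}$ of the partial product $\kf_{s_{k+1}}^{a_{k+1}}\cdots\kf_{s_\ell}^{a_\ell}\mathsf{1}$: in $B(\infty)$ the operators $\kf_{s_m}$ for $m>k$ do not commute with $\ke_{s_k}$, and $\varepsilon_{s_k}$ of such a product is a piecewise-linear (max/min-type) function of the $a_m$'s, not a linear one. Determining this function is precisely the content of Littelmann's string cone analysis, and it is where the $\max$ and $\min$ in the $\mathsf{D}_n$ row of \eqref{Eq: theta} (and the ceilings/floors in the $\mathsf{B}_n$ row) ultimately come from. Your statement that ``each stated inequality follows by a direct evaluation of $\ph_{s_k}$ in terms of the entries $a_{ij}$ of the Cartan matrix'' is therefore not accurate: the Cartan-matrix contribution alone does not produce the combinations $\mathsf{c}$, $\tilde{\mathsf{c}}$, and one has to control $\varepsilon_{s_k}$ by a genuine induction over the reduced word, which is the substance you would need to supply to make this a proof rather than a plan.
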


\indent Let $\cB$ be the crystal given by

\begin{tabular}{ l c }
 $(\mathsf{A}_n)$ &  $\CrystalA$     \\ [2ex]
 $(\mathsf{B}_n)$  &  $\CrystalB$    \\ [2ex]
 $(\mathsf{C}_n)$  &  $\CrystalC$
 \end{tabular}

\begin{tabular}{ l c }
 \raisebox{-4.5em}{$(\mathsf{D}_n)$}  & \qquad \   $\CrystalD$
 \end{tabular}

\noindent   with the entries ordered by

\begin{align*}
\ \ \quad(\mathsf{A}_n) \qquad \qquad &   \overline{1} \succ \overline{2} \succ \cdots \succ \overline{n+1} , \\\quad(\mathsf{B}_n)\qquad \qquad & \overline{1} \succ \overline{2} \succ \cdots \succ \overline{n} \succ 0 \succ n \succ \cdots \succ 2 \succ 1,  \\
\ \ \quad(\mathsf{C}_n) \qquad \qquad &  \overline{1} \succ \overline{2} \succ \cdots \succ \overline{n-1} \succ \overline{n} \succ {n} \succ {n-1} \succ \cdots \succ {1} ,
\qquad\qquad\qquad\qquad\qquad\qquad\qquad\qquad  \\
\ \ \quad(\mathsf{D}_n) \qquad \qquad & \overline{1} \succ \overline{2} \succ \cdots \succ \overline{n-1} \succ    \overline{n}, n
\succ {n-1} \succ \cdots \succ {1} .
\end{align*}
Let
\begin{align} \label{Eq: def of overhat i}
\widehat{\imath} =  \left\{
     \begin{array}{ll}
       \overline{\imath} & \hbox{ if } 1 \le i \le n+1\ \  (\mathsf{A}_n),  \ \ 1 \le i \le n\ \ (\mathsf{B}_n,\mathsf{C}_n,\mathsf{D}_n), \\
       0 & \hbox{ if } i=n+1\ \  (\mathsf{B}_n), \\
       2n+1-i & \hbox{ if } n+1 \le i \le 2n\ \  (\mathsf{C}_n, \mathsf{D}_n), \\
       2n+2-i & \hbox{ if } n+2 \le i \le 2n+1\ \ (\mathsf{B}_n).
     \end{array}
   \right.
\end{align}

For $a,b \in \cB$ with $a \succ b$, let $\mathbf{i}(a,b)$ be a sequence of elements in $I$ such that $a = \kf_{\mathbf{i}(a,b)}b$, and
define the irreducible graded $R$-module
\begin{align} \label{Eq: def of nabla}
\Delta_{(a,b)} = \kf_{\mathbf{i}(a,b)} \mathbf{1}.
\end{align}
Note that $\Delta_{(a,b)}$ in general is not a cuspidal
representation given in \cite{HMM09,KR09}.

\textbf{Remark.\ } {\it
The set of cuspidal representations is in one-to-one correspondence with  the set of positive roots,
via correspondence given by the weights of the cuspidal representations.  But there are some $\Delta(a,b)$'s which do not correspond to positive roots.   For example, in type $\mathsf{B}_n$,
the module $\Delta_{(\overline{1},1)} \in R(\sum_{i=1}^n 2\alpha_i)\text{-} \fMod$ is not a cuspidal representation,
since $\sum_{i=1}^n 2\alpha_i$ is not a positive root
(see Example \ref{Ex: example}). } 

The action of the KLR
algebra can be described  explicitly as follows.

If one of the following holds:  $a \succ b$ \ $(\mathsf{A}_n, \mathsf{C}_n)$, either $b \succeq 0$ or $0 \succeq a$ \ $(\mathsf{B}_n)$, either $b \succ n-1$ or $\overline{n-1} \succ a$ \ $(\mathsf{D}_n)$, then
the module $\Delta_{(a,b)}$ is the 1-dimensional $R$-module $\C v$ specified by
$$ x_i v = 0 , \quad \tau_j v = 0, \quad \e(\mathbf{i}) v = \left\{
                                                            \begin{array}{ll}
                                                              v & \hbox{ if } \mathbf{i} = \mathbf{i}(a,b),  \\
                                                              0 & \hbox{ otherwise.}
                                                            \end{array}
                                                          \right.
 $$

If $a \succ 0 \succ b$ for type $\mathsf{B}_n$, then $\Delta_{(a,b)}$ is the 2-dimensional $R$-module $\C u \oplus \C v$ with $R$-action
\begin{align*}
x_i u &= 0, \qquad
\tau_ju = \left\{
                                \begin{array}{ll}
                                  v & \hbox{ if } j = d, \\
                                  0 & \hbox{ otherwise,}
                                \end{array}
                              \right.
 \qquad \e(\mathbf{i})u = \left\{
                           \begin{array}{ll}
                             u & \hbox{ if } \mathbf{i} = \mathbf{i}(a,b), \\
                             0 & \hbox{ otherwise,}
                           \end{array}
                         \right. \\
x_iv &= \left\{
           \begin{array}{ll}
            - u & \hbox{ if } i = d, \\
             u & \hbox{ if } i = d + 1, \\
             0 & \hbox{ otherwise}
           \end{array}
         \right.
\qquad \tau_jv= 0,
 \qquad \e(\mathbf{i})v = \left\{
                           \begin{array}{ll}
                             v & \hbox{ if } \mathbf{i} = \mathbf{i}(a,b), \\
                             0 & \hbox{ otherwise,}
                           \end{array}
                         \right.
\end{align*}
where $d$ is an integer such that $\sigma_{d}(\mathbf{i}(a,b)) = \mathbf{i}(a,b)$.

If $a \succeq \overline{n-1} $ and $n-1 \succeq b$ for type $\mathsf{D}_n$, then the module $\Delta_{(a,b)}$ is the 2-dimensional $R$-module $\C u \oplus \C v$ given by
\begin{align*}
x_i u &= 0, \quad \tau_j u = \left\{
                              \begin{array}{ll}
                                \mathcal{Q}_{n,n-1}(x_{n},x_{n-1}) v & \hbox{ if } j = d, \\
                                0 & \hbox{ otherwise, }
                              \end{array}
                            \right.
\quad \e(\mathbf{i})u = \left\{
                         \begin{array}{ll}
                           u & \hbox{ if } \mathbf{i} = \mathbf{i}(a, {n}) * \mathbf{i}({n}, b), \\
                           0 & \hbox{ otherwise,}
                         \end{array}
                       \right. \\
x_i v &= 0, \quad \tau_j v = \left\{
                              \begin{array}{ll}
                                u & \hbox{ if } j = d, \\
                                0 & \hbox{ otherwise, }
                              \end{array}
                            \right.
\ \ \qquad\quad\qquad\qquad \quad \e(\mathbf{i})v = \left\{
                         \begin{array}{ll}
                           v & \hbox{ if } \mathbf{i} = \mathbf{i}(a, \overline{n}) * \mathbf{i}(\overline{n}, b), \\
                           0 & \hbox{ otherwise,}
                         \end{array}
                       \right.
\end{align*}
where $d$ is an integer such that $\sigma_{d}(\mathbf{i}(a, n) * \mathbf{i}(n, b)) = \mathbf{i}(a, \overline{n}) * \mathbf{i}(\overline{n}, b)$.
Note that $ \mathcal{Q}_{n,n-1}(x_{n},x_{n-1}) = \zeta_{n,n-1} \in \C \setminus\{0 \}. $ 

It follows from the description above that, for $a,b \in \cB$ with $a \succ b$,
\begin{align} \label{Eq: cha of nabla}
\ch \Delta_{(a,b)} = \left\{
                       \begin{array}{ll}
                         \mathbf{i}(a, n) * \mathbf{i}(n, b) + \mathbf{i}(a, \overline{n}) * \mathbf{i}(\overline{n}, b) & \hbox{ if } a \succeq \overline{n-1},\ {n-1} \succeq b\ \ (\mathsf{D}_n), \\
                           2 \mathbf{i}(a,b) & \hbox{ if } a\succ 0 \succ b\ \  (\mathsf{B}_n),\\
                         \mathbf{i}(a,b) & \hbox{ otherwise. }  \\
                       \end{array}
                     \right.
\end{align}

\textbf{Remark.\ } {\it
We see from the expression for the character in  \eqref{Eq: cha of nabla} that  $\Delta_{(a,b)}$ can be identified with the sequence $\mathbf{i}(a,b)$,  or equivalently, with the segment of $\cB$ between $a$ and $b$, which is another reason why we  defined the Kashiwara operators in the opposite manner to \cite{KP10,KL09}. }

Given $\mathbf{a} \in \mathcal{S}$ (resp.\ $\mathbf{a} \in \mathcal{S}^\lambda$) and $i=1,\ldots,n$, let
\begin{align} \label{Eq: nabla}
\Delta(\mathbf{a};i) = \left\{
                            \begin{array}{ll}
                              \dbt{j=n+1-i}{n_i} \left(\Delta_{(\widehat{n+1-\imath},\ \widehat{\jmath+1})}^{\boxtimes \vartheta_{ij}}\right) &\ \ \hbox{if } \ \ 1 \le i \le n\ \  (\mathsf{A}_n,\mathsf{B}_n,\mathsf{C}_n), \\
                              \Delta_{(\overline{n-1},\ {n})}^{\boxtimes t_{1,n-1}} &\ \  \hbox{if }\ \  i=1\ \  (\mathsf{D}_n),\\
                              \Delta_{(\overline{n-1},\ \overline{n})}^{\boxtimes t_{1,n}} &\ \  \hbox{if }\ \  i= 2 \ \ (\mathsf{D}_n), \\
                              \dbt{j=n+1-i}{n_i} \left(\Delta_{(\widehat{n+1-\imath},\ \widehat{\jmath+1})}^{\boxtimes \vartheta_{i-1,j}}\right) &\ \  \hbox{if } \ \ 3 \le i \le n \ (\mathsf{D}_n),
                            \end{array}
                          \right.
\end{align}
where $n_i= n \ \hbox{\rm for all}\ i\ \ (\mathsf{A}_n)$, $n_i = n+i\ \ (\mathsf{B}_n)$, $n_i = n-1+i\ \ (\mathsf{C}_n, \mathsf{D}_n)$, $\triangle(\mathbf{a}) = \{ t_{i,j} \}$,  and

\begin{align} \label{Eq: theta}
\vartheta_{ij} = \left\{
                   \begin{array}{ll}
                     t_{i,j} - t_{i,j+1} & \hbox{if }\ \  j \le n_i \ \ (\mathsf{A}_n,\mathsf{C}_n),\  j \le n-2\ \  (\mathsf{B}_n),\  j \le n-3\ \ (\mathsf{D}_n), \\
                     t_{i,n-1} - \lceil \frac{t_{i,n}}{2} \rceil & \hbox{if } \ \ j=n-1\ \ (\mathsf{B}_n), \\
                     \lceil \frac{t_{i,n}}{2} \rceil - \lfloor \frac{t_{i,n}}{2} \rfloor & \hbox{if }\ \  j=n\ \ (\mathsf{B}_n), \\
                     \lfloor \frac{t_{i,n}}{2} \rfloor - t_{i,n+1} & \hbox{if } \ \ j=n+1\ \  (\mathsf{B}_n), \\
                      t_{i, n-2} -  \mathsf{max}\{ t_{i, n-1}, t_{i,n} \} & \hbox{if } \ \ j=n-2\ \ (\mathsf{D}_n), \\
                      \mathsf{max}\{ 0, t_{i,n}-t_{i,n-1} \} & \hbox{if }\ \  j=n-1\ \ (\mathsf{D}_n), \\
                     \mathsf{max}\{ 0, t_{i,n-1}-t_{i,n} \} & \hbox{if }\ \  j=n\ \ (\mathsf{D}_n), \\
                     \mathsf{min}\{ t_{i, n-1}, t_{i,n} \} - t_{i, n+1} & \hbox{if } \ \ j = n+1\  \ (\mathsf{D}_n), \\
                      t_{i,j-1} - t_{i,j} & \hbox{if } \ \ j \ge n+2\ \ (\mathsf{B}_n, \mathsf{D}_n).
                   \end{array}
                 \right.
\end{align}

For $v \in B(\infty)$ (resp.\ $v \in B(\lambda)$), let
$\mathbf{a}(v)$ be the adapted string of $v$ with respect to the
expression $w_0 = r_{\mathbf{s}_1} \cdots r_{\mathbf{s}_n}$ given in
Table $\ref{Tb: w0}$. Now we are ready to state the main theorem in
this section.

\begin{theorem} \label{Thm: main theorem} Let $(\mathfrak{A}, \mathsf{P}, \Pi, \mathsf{P}^{\vee}, \Pi^{\vee})$ be a Cartan datum of finite classical type.  For $v\in B(\infty)$ (resp.\ $v\in B(\lambda)$), let $\mathbf{a}(v)$ be the adapted string of $v$
with respect to the expression $w_0 = r_{\mathbf{s}_1} \cdots r_{\mathbf{s}_n}$ given in Table $\ref{Tb: w0}$
and let $\Delta(\mathbf{a}(v)) = \Delta(\mathbf{a}(v);1) \boxtimes \cdots \boxtimes \Delta(\mathbf{a}(v);n)$. Then
\begin{enumerate}
\item  $\hd\,\ind  \Delta(\mathbf{a}(v) )$ is irreducible.
\item The map $\Psi: B(\infty) \longrightarrow \B(\infty)$ defined by
$$ \Psi(v) = \hd\,\ind \Delta(\mathbf{a}(v))  \ \ \ \text{ for } v \in B(\infty) $$  is a crystal isomorphism.
\item The map $\Psi^\lambda: B(\lambda) \longrightarrow \B(\lambda)$  defined by
$$ \Psi^\lambda(v) = \hd\,\ind  \Delta(\mathbf{a}(v)) \ \ \ \text{ for } v \in B(\lambda)$$  is a crystal isomorphism.
\end{enumerate}
\end{theorem}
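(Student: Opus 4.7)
The plan is to exploit the crystal isomorphism $\Phi: B(\infty) \to \B(\infty)$ from Proposition \ref{Prop: crystal iso} together with the identification $\mathcal{N}_k(v) \simeq \hd\,\ind \Delta(\mathbf{a}(v);k)$. The main substantive work will be establishing this identification (the ``gluing lemma'' previewed in the introduction); once it is in hand, the three assertions of the theorem will follow by essentially formal manipulations with induction.

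First, I would prove that for every $k=1,\ldots,n$ and every $v \in B(\infty)$,
$$\mathcal{N}_k(v) \simeq \hd\,\ind \Delta(\mathbf{a}(v);k).$$
By Proposition \ref{Prop: description of Nk}, $\mathcal{N}_k(v) = \kf_{\mathbf{s}_k}^{\mathbf{a}(v)_k}\mathbf{1}$, and by inspection of Table \ref{Tb: w0} the sequence $\mathbf{s}_k$ is precisely the sequence of indices that one reads off while descending the crystal $\cB$. The outer tensor product $\Delta(\mathbf{a}(v);k)$ is a concatenation (as prescribed by \eqref{Eq: nabla}, \eqref{Eq: theta}) of the modules $\Delta_{(a,b)}$ corresponding to consecutive segments of $\cB$; so it suffices to show that these segments glue via the surjections $\ind(\Delta_{(a,b)} \boxtimes \Delta_{(b,c)}) \twoheadrightarrow \Delta_{(a,c)}$ after an appropriate reordering of factors. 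I would first handle $k=n$ (the full row), combining the aforementioned surjection with Lemma~4.3 of \cite{KP10} to identify the head, and using the commutation criterion (Lemma \ref{Lem: commuting lemma}) to reorder the factors so that adjacent $\Delta_{(a,b)} \boxtimes \Delta_{(b,c)}$ pairs can be fused. The case of general $k$ then follows by restricting to the subalgebra corresponding to $I_{(k)}$, which is again of finite classical type of smaller rank so the same argument applies verbatim.

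With Step 1 in hand, assertions (1) and (2) follow quickly: exactness of induction converts the surjections $\ind \Delta(\mathbf{a}(v);k) \twoheadrightarrow \mathcal{N}_k(v)$ into a surjection
$$\ind \Delta(\mathbf{a}(v)) \;=\; \ind\bigl(\dbt{k=1}{n} \Delta(\mathbf{a}(v);k)\bigr) \;\twoheadrightarrow\; \ind\bigl(\dbt{k=1}{n} \mathcal{N}_k(v)\bigr),$$
and the right-hand side has irreducible head $\Phi(v)$ by Proposition \ref{Prop: crystal iso}. Hence the left-hand side also has $\Phi(v)$ as its (necessarily unique) irreducible quotient, so $\hd\,\ind \Delta(\mathbf{a}(v))$ is irreducible and $\Psi = \Phi$ is a crystal isomorphism. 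For (3), if $\iota_\lambda(v) = v' \otimes t_\lambda \otimes c$ then $\mathbf{a}(v) = \mathbf{a}(v')$, so $\Psi^\lambda(v)$ is defined as the same $R$-module as $\Psi(v')$; the description of $\mathcal{S}^\lambda$ in Proposition \ref{Prop: S lambda} together with the inclusion $B(\lambda) \hookrightarrow B(\infty)$ ensures that this module is annihilated by $I^\lambda$, hence descends to an irreducible $R^\lambda$-module, and $\Psi^\lambda$ is the restriction of the crystal isomorphism $\Psi$ to the subcrystal $B(\lambda)$.

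The hard part will be Step 1, particularly the careful verification of the gluing near the ``turning points'' of $\cB$: the entry $0$ in type $\mathsf{B}_n$ and the pair $\overline{n}, n$ in type $\mathsf{D}_n$, where the $\Delta_{(a,b)}$ are $2$-dimensional. These are responsible for the ceiling/floor expressions in \eqref{Eq: theta}, and one has to verify case-by-case both that the surjection $\ind(\Delta_{(a,b)} \boxtimes \Delta_{(b,c)}) \twoheadrightarrow \Delta_{(a,c)}$ indeed hits the correct head (using the explicit formulas for $R$-actions on these $2$-dimensional modules above) and that Lemma \ref{Lem: commuting lemma} can be applied to the relevant pairs so that an arbitrary adapted string can be assembled in the required order. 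Once this case analysis is completed, the rest of the argument is essentially formal.
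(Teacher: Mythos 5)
Your overall route coincides with the paper's: (i)~show $\mathcal{N}_k(v)=\hd\,\ind\Delta(\mathbf{a}(v);k)$ via the commutation lemma, the surjections $\ind(\Delta_{(a,b)}\boxtimes\Delta_{(b,c)})\twoheadrightarrow\Delta_{(a,c)}$, and an extension of \cite[Lem.~4.3]{KP10} to the remaining classical types, treating $k=n$ first and reducing general $k$ to the rank-$k$ subdatum; (ii)~feed this into the crystal isomorphism $\Phi$ of Proposition~\ref{Prop: crystal iso}; (iii)~transfer to $B(\lambda)$ via $\iota_\lambda$, using $\mathbf{a}(v)=\mathbf{a}(v')$. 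Steps (i) and (iii), including your identification of the turning points of $\cB$ as the delicate part, match what the paper does in Lemmas~\ref{Lem: commuting lemma}, \ref{Lem: seq of kf} and~\ref{Lem: gluing lemma} and in the final diagram-chase with $\imath_\lambda$, $\jmath_\lambda$.

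There is, however, a genuine gap in your Step~2. You argue that since $\ind\Delta(\mathbf{a}(v))\twoheadrightarrow\ind\bigl(\dbt{k=1}{n}\mathcal{N}_k(v)\bigr)$ and the target has irreducible head $\Phi(v)$, the source must also have irreducible head. A surjection onto a module with simple head does not force the source to have simple head (for instance $S_1\oplus S_2\twoheadrightarrow S_1$), so this inference is invalid as stated. What is actually needed is the multiplicity-one control. The paper applies Lemma~\ref{Lem: sum of fi} directly to $M_k:=\ind\Delta(\mathbf{a}(v);k)$: the hypotheses there are that $\varepsilon_i(M_k)=0$ for $i\in I_{(k-1)}$, that $\hd M_k$ is irreducible, \emph{and} that $\hd M_k$ occurs with multiplicity one as a composition factor of $M_k$. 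The last condition is exactly what Lemma~\ref{Lem: gluing lemma}(1) provides, and it is what forces $\hd\,\ind\bigl(\dbt{k=1}{n}M_k\bigr)$ to be simple and to agree with $\hd\,\ind\bigl(\dbt{k=1}{n}\hd M_k\bigr)=\Phi(v)$. So your gluing step must be upgraded to produce the multiplicity-one statement, and your Step~2 should invoke Lemma~\ref{Lem: sum of fi} rather than the bare surjection.
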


Theorem \ref{Thm: main theorem} and Proposition \ref{Prop: S lambda} combine to give the following explicit description of the irreducible graded modules over $R$ and $R^\lambda$.

\begin{corollary} \label{Cor: complete lists}
 Let $(\mathfrak{A}, \mathsf{P}, \Pi, \mathsf{P}^{\vee}, \Pi^{\vee})$ be a Cartan datum of finite classical type. Then
\begin{enumerate}
\item the set  $$ \mathcal{A} = \{ \hd\ind \Delta(\mathbf{a}) \mid \mathbf{a} \in \mathcal{S}  \}$$ is the complete list of all irreducible graded $R$-modules up to isomorphism and grading shift.
\item For $\lambda \in \mathsf{P}^+$,  the set
$$ \mathcal{A}^\lambda = \{ \hd\,\ind \Delta(\mathbf{a}) \mid \mathbf{a} \in \mathcal{S}^\lambda   \}$$ is the complete list of all  irreducible graded $R^\lambda$-modules up to isomorphism and grading shift.
\end{enumerate}
\end{corollary}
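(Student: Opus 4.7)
The plan is to deduce the corollary directly from Theorem \ref{Thm: main theorem} by composing with the bijection between adapted strings and crystal elements recorded in \eqref{Eq: one to one}. No additional structural work is required; the corollary is essentially a translation of the theorem through the combinatorial identification $\mathcal{S} \leftrightarrow B(\infty)$ (resp. $\mathcal{S}^\lambda \leftrightarrow B(\lambda)$).

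More concretely, for (1), \eqref{Eq: one to one} provides a bijection
$$\mathcal{S} \longrightarrow B(\infty), \qquad \mathbf{a} = (a_1,\ldots,a_\ell) \longmapsto v_{\mathbf{a}} := \kf_{s_1}^{a_1} \cdots \kf_{s_\ell}^{a_\ell} \mathsf{1},$$
and by construction $\mathbf{a}(v_{\mathbf{a}}) = \mathbf{a}$ for each $\mathbf{a} \in \mathcal{S}$. Composing with the crystal isomorphism $\Psi : B(\infty) \to \B(\infty)$ of Theorem \ref{Thm: main theorem}, we obtain a bijection $\mathcal{S} \to \B(\infty)$ whose value on $\mathbf{a}$ is
$$\Psi(v_{\mathbf{a}}) = \hd\,\ind \Delta(\mathbf{a}(v_{\mathbf{a}})) = \hd\,\ind \Delta(\mathbf{a}).$$
Thus $\mathcal{A}$ is exactly the image of $\Psi$, i.e.\ $\mathcal{A} = \B(\infty)$. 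Since $\B(\infty)$ by definition parametrises the isomorphism classes of finite-dimensional irreducible graded $R$-modules (and the symbol $\simeq$ used throughout allows grading shifts), this proves that $\mathcal{A}$ is a complete irredundant list up to isomorphism and grading shift.

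The proof of (2) is identical, with $\mathsf{1}$ replaced by $b_\lambda$ in \eqref{Eq: one to one}, and with $\Psi^\lambda : B(\lambda) \to \B(\lambda)$ in place of $\Psi$. The only points to verify are the definitional ones just used, so there is no genuine obstacle in the corollary itself; all the substantive content is packaged into Theorem \ref{Thm: main theorem}, which in turn rests on Proposition \ref{Prop: crystal iso}, on Littelmann's description of $\mathcal{S}$ and $\mathcal{S}^\lambda$ in Propositions \ref{Prop: adapted strings} and \ref{Prop: S lambda}, and on the combinatorics of the modules $\Delta_{(a,b)}$ developed in this section.
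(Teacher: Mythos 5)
Your proposal is correct and follows the same reasoning the paper intends: the corollary is stated immediately after Theorem \ref{Thm: main theorem} with the remark that it follows from that theorem together with Proposition \ref{Prop: S lambda}, which is exactly the bijection $\mathcal{S} \leftrightarrow B(\infty)$ (resp. $\mathcal{S}^\lambda \leftrightarrow B(\lambda)$) you invoke via \eqref{Eq: one to one}. Your write-up simply makes explicit the composition of that bijection with $\Psi$ (resp.\ $\Psi^\lambda$), which the paper leaves implicit.
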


In the following proposition, we give an upper bound for the number
of $\Delta_{(a,b)}$'s that can occur in
    $\Delta(\mathbf{a}(v))$ for $v\in B(\lambda)$.

\begin{proposition} \label{Lem: upper bound}
For $v \in B(\lambda)$, let $\eta(v)$ be the number of
$\Delta_{(a,b)}$'s in the outer tensor product
$\Delta(\mathbf{a}(v)) = \Delta(\mathbf{a}(v);1) \boxtimes \cdots
\boxtimes \Delta(\mathbf{a}(v);n)$. Then $ \eta(v) \le n
\lambda(h)$,    where
$$h = \begin{cases} h_1+ \cdots + h_n & \quad  (\mathsf{A}_n), \\
 2h_1+ \cdots + 2h_{n-1} + h_n & \quad  (\mathsf{B}_n), \\
 2(h_1+ \cdots  + h_n) & \quad  (\mathsf{C}_n), \\
 2h_1+ \cdots + 2h_{n-2} + h_{n-1} + h_n & \quad  (\mathsf{D}_n). \end{cases}$$
\end{proposition}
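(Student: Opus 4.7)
The plan is to reduce $\eta(v)$ to a manageable sum by telescoping, then bound each resulting summand uniformly by $\lambda(h)$ using the crystal structure of $B(\lambda)$.

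First I will compute $\eta(v) = \sum_{i,j}\vartheta_{i,j}$ (with the obvious modification for the two special factors in type $\mathsf{D}_n$) and show that each row sum telescopes. In types $\mathsf{A}_n$ and $\mathsf{C}_n$, every $\vartheta_{i,j}$ from \eqref{Eq: theta} is already a difference of consecutive $t_{i,j}$'s, so the row sum collapses to $t_{i,n+1-i}$. In type $\mathsf{B}_n$ the three middle terms at $j=n-1,n,n+1$ combine via the identity $\lceil t_{i,n}/2\rceil+\lfloor t_{i,n}/2\rfloor=t_{i,n}$, and in type $\mathsf{D}_n$ the four central terms at $j=n-2,n-1,n,n+1$ collapse via $\max\{A,B\}-\min\{A,B\}=\max\{0,A-B\}+\max\{0,B-A\}$. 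After the collapse one obtains
\begin{align*}
\eta(v)=\begin{cases} \sum_{i=1}^{n} t_{i,n+1-i} & (\mathsf{A}_n,\mathsf{B}_n,\mathsf{C}_n),\\ t_{1,n-1}+t_{1,n}+\sum_{i=2}^{n-1}t_{i,n-i} & (\mathsf{D}_n), \end{cases}
\end{align*}
a sum of exactly $n$ entries of the triangle $\triangle(\mathbf{a}(v))$ in every case.

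Next I identify each of these entries with a $\varphi$-value on an element of $B(\lambda)$. Writing $\mathbf{a}(v)=(a_1,\ldots,a_\ell)$ and setting $u_k := \kf_{s_{k+1}}^{a_{k+1}}\cdots \kf_{s_\ell}^{a_\ell} b_\lambda \in B(\lambda)$, the defining maximality of the adapted string forces $a_k=\varphi_{s_k}(u_k)$ for every $k$. Applying this to the leading exponent of each block $\mathbf{a}_i$ in Table \ref{Tb: w0} realizes each summand above as $\varphi_s(u)$ for some $s \in I$ and $u\in B(\lambda)$: for $\mathsf{A}_n,\mathsf{B}_n,\mathsf{C}_n$ the index $s$ is $n+1-i$, while in $\mathsf{D}_n$ the indices that appear as $i$ varies through the $n$ summands are $n, n-1, n-2,\ldots,1$, each exactly once.

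The crux is then the uniform bound $\varphi_j(b)\le \lambda(h)$ for every $b\in B(\lambda)$ and every $j\in I$. Since $\varphi_j(b)\le \varphi_j(b)+\varepsilon_j(b)$ and the latter is the largest $h_j$-eigenvalue on the $j$-string through $b$, it suffices to show $\max_{\mu\in P(V(\lambda))}\langle h_j,\mu\rangle = \max_{w\in W}\langle h_j,w\lambda\rangle\le \lambda(h)$. Writing $\lambda=(\lambda_1',\ldots,\lambda_n')$ in the $\epsilon$-basis and using that $W$ acts by permutations (with signs in $\mathsf{B}_n,\mathsf{C}_n$, and even sign flips in $\mathsf{D}_n$), a short type-by-type computation shows the maximum is $\lambda_1'=\lambda_1+\cdots+\lambda_n$ for $\mathsf{A}_n$ and $2\lambda_1'$ for $\mathsf{B}_n,\mathsf{C}_n,\mathsf{D}_n$, which coincides with $\lambda(h)$ by direct inspection of the specified $h$. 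Summing this bound over the $n$ summands yields $\eta(v)\le n\lambda(h)$. The main technical subtlety will be the type $\mathsf{D}_n$ telescoping in Step 1, where the piecewise $\max$-$\min$ formulas for $\vartheta_{i,n-1}$ and $\vartheta_{i,n}$ require careful bookkeeping; the $W$-orbit computation for $j=n$ in type $\mathsf{D}_n$ also needs the parity constraint on sign flips to be respected.
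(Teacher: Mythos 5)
Your approach is genuinely different from the paper's, and modulo one incorrect (but easily repaired) claim it works. The paper bounds the row sums $\mathsf{a}_{k,1}$ using the explicit inequalities describing $\mathcal{S}^\lambda$ (Littelmann's cone in Proposition~\ref{Prop: S lambda}), checking $\mathsf{a}_{n,1}\le\lambda(h)$ by a chain of those inequalities and then running an induction on the rank via a restriction to the sub-crystal on $I\setminus\{1\}$. You instead telescope to the same $n$ leading entries (your Step~1 is a cleaner statement of what the paper asserts as an inequality), then bound each by the general crystal-theoretic fact that $\varphi_j(b)\le\max_{\mu\in\WT(V(\lambda))}\langle h_j,\mu\rangle=\max_{w\in W}\langle h_j,w\lambda\rangle$, and finish with a short Weyl-orbit computation. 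This avoids Proposition~\ref{Prop: S lambda} entirely and is more conceptual. The one real error is in Step~2: the assertion $a_k=\varphi_{s_k}(u_k)$ is false. The adapted string is defined greedily from the left via $\varepsilon$ ($a_k=\varepsilon_{s_k}(u_{k-1})$, which forces $\varepsilon_{s_k}(u_k)=0$), not greedily from the right via $\varphi$; for instance $v=b_\lambda$ gives $u_k=b_\lambda$ and $a_k=0$ while $\varphi_{s_k}(u_k)=\lambda(h_{s_k})$ can be arbitrarily large. What \emph{is} true, and all you need, is $a_k\le\varphi_{s_k}(u_k)$, which follows immediately from $u_{k-1}=\kf_{s_k}^{a_k}u_k\ne 0$. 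With that correction the argument goes through; a similar cosmetic fix is needed in Step~3, where for $j<n$ the maximum of $\langle h_j,w\lambda\rangle$ is $\lambda_1'+\lambda_2'$ rather than $2\lambda_1'$, which is still $\le\lambda(h)$, so the final bound $\eta(v)\le n\lambda(h)$ stands.
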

\begin{proof}
Let $\mathbf{a} = \mathbf{a}(v)$ and write $\mathbf{a} =
\mathbf{a}_1 * \cdots * \mathbf{a}_n$, where $\mathbf{a}_k =
(\mathsf{a}_{k,1}, \mathsf{a}_{k,2}, \ldots, \mathsf{a}_{k,l_{k}})$
is the subsequence of $\mathbf{a}$ defined in $\eqref{Eq: subseq of
a}$. Since $\mathsf{a}_{k,1} = t_{k,n+1-k}$ if $\mathfrak{A}$ is of
type $\mathsf{A}_n, \mathsf{B}_n,\mathsf{C}_n$ and $\mathsf{a}_{1,1}
= t_{1,n-1}$, $\mathsf{a}_{2,1} = t_{1,n}$, $\mathsf{a}_{k,1} =
t_{k-1,n+1-k}$ when $n \ge 3$ for type $\mathsf{D}_n$, it follows
from $\eqref{Eq: nabla}$ and $\eqref{Eq: theta}$ that
$$ \eta(v) \le  \mathsf{a}_{1,1} + \cdots + \mathsf{a}_{n-1,1} + \mathsf{a}_{n,1} .$$
Thus, it suffices to show that
$$ \mathsf{a}_{1,1} + \cdots + \mathsf{a}_{n-1,1} + \mathsf{a}_{n,1} \le n \lambda(h). $$

Let $\lambda = \lambda_1 \Lambda_1 + \cdots + \lambda_n \Lambda_n$.
If $n > 1$ for type $\mathsf{A}_n, \mathsf{B}_n,\mathsf{C}_n$ and $n
> 2$ for type $\mathsf{D}_n$, then using the description in
Proposition \ref{Prop: S lambda}, we will obtain
\begin{align}  \label{Eq: upper bound for a1}
\mathsf{a}_{n,1} \le \lambda(h)
\end{align}
from the following inequalities:

\begin{align*}
(\mathsf{A}_n): \  \mathsf{a}_{n,1} = t_{n,1} &\le \lambda_1 + t_{n,2} \le \lambda_1 + \lambda_2 + t_{n,3} \le \cdots \le \lambda_1 + \cdots + \lambda_n = \lambda(h),  \\
(\mathsf{B}_n): \  \mathsf{a}_{n,1} = t_{n,1} &\le \lambda_1 + t_{n,2} + t_{n,2n-2} - 2 t_{n,2n-1} \le \lambda_1 + \lambda_2 + t_{n,3}+ t_{n,2n-3} - t_{n,2n-2} - t_{n,2n-1} \\
& \le \quad   \cdots \quad \le \lambda_1 + \cdots + \lambda_{n-1} + t_{n,n} - t_{n,n+1} - t_{n,2n-1} \\
& \le \lambda_1 + \cdots + \lambda_{n} + t_{n,n+1} - t_{n,2n-1} \le \lambda_1 + \cdots + \lambda_{n} + \lambda_{n-1} + t_{n,n+2} - t_{n,2n-1} \\
& \le \quad  \cdots \quad \le 2\lambda_1 + 2\lambda_2 + \cdots + 2 \lambda_{n-1} + \lambda_n  = \lambda(h), \\
(\mathsf{C}_n): \  \mathsf{a}_{n,1} = t_{n,1} &\le \lambda_1 + t_{n,2} + t_{n,2n-2} - 2 t_{n,2n-1} \le \lambda_1 + \lambda_2 + t_{n,3}+ t_{n,2n-3} - t_{n,2n-2}  - t_{n,2n-1} \\
& \le \quad   \cdots \quad \le \lambda_1 + \cdots + \lambda_{n-1} + 2 t_{n,n}  - t_{n,n+1} - t_{n,2n-1}  \\
& \le \lambda_1 + \cdots + \lambda_{n-1} + 2\lambda_n + t_{n,n+1} - t_{n,2n-1}  \\
& \le \quad  \cdots \quad \le 2\lambda_1 + 2\lambda_2 + \cdots + 2
\lambda_{n-1} + 2\lambda_n  = \lambda(h),  \\
(\mathsf{D}_n): \  \mathsf{a}_{n,1} = t_{n-1,1} &\le \lambda_1 + t_{n-1,2} + t_{n-1,2n-3} - 2 t_{n-1,2n-2} \\
& \le \lambda_1 + \lambda_2 + t_{n-1,3}+ t_{n-1,2n-4} - t_{n-1,2n-3}  - t_{n-1,2n-2} \\
& \le \quad   \cdots \quad \le  \lambda_1 + \cdots + \lambda_{n-2} + t_{n-1,n-1}+ t_{n-1,n} - t_{n-1,n+1}  - t_{n-1,2n-2} \\
& \le  \lambda_1 + \cdots + \lambda_{n} + t_{n-1,n+1}  - t_{n-1,2n-2} \\
& \le \lambda_1 + \cdots + \lambda_{n} + \lambda_{n-2}+ t_{n-1,n+2}  - t_{n-1,2n-2}  \\
& \le \quad  \cdots \quad \le 2\lambda_1 + 2\lambda_2 + \cdots + 2
\lambda_{n-2} + \lambda_{n-1} + \lambda_n  = \lambda(h).
\end{align*}

We proceed by induction on $n$. If $n=2$ for type $\mathsf{A}_n,
\mathsf{B}_n,\mathsf{C}_n$ or $n=3$ for type $\mathsf{D}_n$, then
the assertion can be proved in the same manner as above. We assume
that $n>2$ for type $\mathsf{A}_n, \mathsf{B}_n,\mathsf{C}_n$ and
$n>3$ for type $\mathsf{D}_n$. Let $U_q(\g_{n-1})$ be the subalgebra
of $U_q(\g)$ generated by $e_i,f_i$ ($i \in I\setminus \{ 1 \} $)
and $q^{h}\ (h\in \mathsf{P}^\vee)$, and let $\mathfrak{B}$ be the
crystal obtained from $B(\lambda)$ by forgetting the $1$-arrows.
Note that $U_q(\g_{n-1})$ is of type $\mathsf{X}_{n-1}$ when
$U_q(\g)$ is of type $\mathsf{X}_n$ $(\mathsf{X=A,B,C,D})$. Let $u =
\kf_{\mathbf{s}_n}^{\mathsf{a}_n} b_\lambda$ where $\mathbf{s}_n$ is
the sequence given in Table \ref{Tb: w0}, and let $\mathcal{C}$ be
the connected component of $\mathfrak{B}$ containing $v$. Then $u$
is the highest weight vector of the $U_q(\g_{n-1})$-crystal
$\mathcal{C}$ with weight
$$ \wt(u) = \left\{
              \begin{array}{ll}
                \lambda - t_{n,1}\alpha_1 - \cdots - t_{n,n}\alpha_n & \quad (\mathsf{A}_n), \\
                \lambda - (t_{n,1}+t_{n,2n-1})\alpha_1 - \cdots - (t_{n,n-1}+t_{n,n+1})\alpha_{n-1} -  (t_{n,n})\alpha_n & \quad (\mathsf{B}_n, \mathsf{C}_n),\\
                \lambda - (t_{n-1,1}+t_{n-1,2n-2})\alpha_1 - \cdots - (t_{n-1,n-2}+t_{n-1,n+1})\alpha_{n-2} \\
\qquad \qquad \qquad \qquad\qquad \qquad \ \ \  \qquad \qquad -
t_{n-1,n}\alpha_{n-1} - t_{n-1,n-1}\alpha_{n} & \quad
(\mathsf{D}_n).
              \end{array}
            \right.$$

 Now suppose that $h' = h-h_1 \ (\mathsf{A}_n)$,  and $h' = h-2h_1 \ (\mathsf{B}_n, \mathsf{C}_n,
            \mathsf{D}_n)$.    Then we see that
\begin{align*}
& \alpha_1(h') = -1,\ \alpha_2(h') = 1, \quad   \alpha_i(h') = 0\ (i=3,\ldots,n) \quad (\mathsf{A}_n), \\
& \alpha_1(h') = -2,\ \alpha_2(h') = 2, \quad \alpha_i(h') = 0\
(i=3,\ldots,n)\quad  (\mathsf{B}_n, \mathsf{C}_n, \mathsf{D}_n).
\end{align*}
Proposition \ref{Prop: S lambda} implies \begin{align} \label{Eq:
upper bound for wt u} \wt(u)(h') \le \lambda(h)
\end{align}
by the following calculations:
\begin{align*}
(\mathsf{A}_n): \ \ \wt(u)(h') &= \lambda(h') + t_{n,1} - t_{n,2} \le \lambda(h), \\
(\mathsf{B}_n,\mathsf{C}_n ): \ \ \wt(u)(h') &= \lambda(h') + 2(t_{n,1} + t_{n,2n-1}) - 2(t_{n,2} + t_{n,2n-2}) \\
& = \lambda(h') + 2(t_{n,1} - t_{n,2} -t_{n,2n-2} + 2t_{n,2n-1}) - 2t_{n,2n-1}  \le \lambda(h), \\
(\mathsf{D}_n): \ \
\wt(u)(h') &= \lambda(h') + 2(t_{n,1} + t_{n,2n-2}) - 2(t_{n,2} + t_{n,2n-3}) \\
& = \lambda(h') + 2(t_{n,1} - t_{n,2} -t_{n,2n-3} + 2t_{n,2n-2}) -
2t_{n,2n-2}  \le \lambda(h) .
\end{align*}
Since the adapted string of $u$ is $\mathbf{a}_1*\cdots *
\mathbf{a}_{n-1}$, we obtain by the induction hypothesis that
\begin{align*}
\mathsf{a}_{1,1} + \cdots + \mathsf{a}_{n-1,1} \le (n-1) \wt(u)(h'),
\end{align*}
which yields, by $\eqref{Eq: upper bound for a1}$ and $\eqref{Eq:
upper bound for wt u}$,
\begin{align*}
\mathsf{a}_{1,1} + \cdots + \mathsf{a}_{n-1,1} + \mathsf{a}_{n,1}
\le (n-1) \lambda(h) + \lambda(h) = n \lambda(h).
\end{align*} \end{proof}

\vskip 1em

\begin{example} \label{Ex: example}
Assume $(\mathfrak A, \mathsf{P}, \Pi, \mathsf{P}^{\vee}, \Pi^{\vee})$ is type $\mathsf{B}_3$ and $\lambda=\Lambda_1+\Lambda_2+3\Lambda_3 \in \mathsf{P}^+$.  In this case,
$w_0=r_3(r_2r_3r_2)(r_1r_2r_3r_2r_1)$ and
$$\cB = \CrystalBf . $$
Kashiwara and Nakashima \cite{KN94} constructed combinatorial
realizations of highest weight crystals for finite classical types using
certain semistandard tableaux, which are now referred to as  {\it Kashiwara-Nakashima tableaux}.
We take this combinatorial model as a realization of the crystal $B(\lambda)$.
Let $T$ denote the following element in $B(\lambda)$
$$
T \ =\ \KNT{\overline{1}}{\overline{3}}{\overline{1}}{3}{0}{\overline{3}}{1}{\overline{3}}{\overline{2}}.
$$
Since $w_0=r_3(r_2r_3r_2)(r_1r_2r_3r_2r_1)$, using the crystal structure given in \cite{KN94}, we have
$$\mathbf{a}(T)=(3,3,3,0,4,3,5,2,1) \in \mathcal{S}^{\lambda},\qquad \triangle(\mathbf{a}(T)) = \exTriangleBB ,$$
$$
\begin{tabular}{ c c c c c}
\ $T_5 := \ke_1^4 (T_4)$ \ & \ $T_6 := \ke_2^3 (T_5)$\ &\ $T_7 := \ke_3^5 (T_6)$\ &\ $T_8 := \ke_2^2 (T_7)$\ &\ $T_9 := \ke_1^1 (T_8)$\ \\ [2ex]
 $\KNT{\overline{2}}{1}{\overline{2}}{1}{3}{0}{1}{2}{3}$ & $\KNT{\overline{3}}{1}{\overline{3}}{1}{2}{0}{1}{2}{3}$ &
$\KNT{3}{1}{3}{1}{2}{3}{1}{2}{3}$ & $\KNT{2}{1}{2}{1}{2}{3}{1}{2}{3}$
& $\KNT{1}{1}{2}{1}{2}{3}{1}{2}{3}$ \\
\end{tabular}
$$

\vskip 1ex

$$
\begin{tabular}{ c c c }
 \qquad $T_2 := \ke_2^3 (T_1)$  \qquad & \qquad $T_3 := \ke_3^3 (T_2)$  \qquad & \qquad $T_4 := \ke_2^0 (T_3)$ \qquad \\ [2ex]
 $\KNT{\overline{1}}{2}{\overline{1}}{2}{0}{\overline{3}}{1}{2}{\overline{3}}$ & $\KNT{\overline{1}}{2}{\overline{1}}{2}{3}{0}{1}{2}{3}$ &
$\KNT{\overline{1}}{2}{\overline{1}}{2}{3}{0}{1}{2}{3}$ \\
\end{tabular}
$$

\vskip 1ex

$$
\begin{tabular}{ c }
 \qquad $T_1 := \ke_3^3 (T)$  \qquad  \\ [2ex]
 $\KNT{\bar{1}}{3}{\overline{1}}{3}{0}{\overline{3}}{1}{3}{\overline{2}}$ \\
\end{tabular}
$$
Note that $\varepsilon_i(T_9)=0\ (i=1,2,3)$, $\varepsilon_i(T_4)=0\ (i=2,3)$ and $\varepsilon_i(T_1)=0\ (i=3)$.
It follows from $\eqref{Eq: nabla}$ and $\eqref{Eq: theta}$ that
\begin{align*}
&\vartheta_{1,3} = 1,\ \vartheta_{1,4} = 1,  \\
&\vartheta_{2,2} = 1,\ \vartheta_{2,3} = 1,\  \vartheta_{2,4} = 1,\ \vartheta_{2,5} = 0,   \\
&\vartheta_{3,1} = 1,\ \vartheta_{3,2} = 0,\  \vartheta_{3,3} = 1,\ \vartheta_{3,4} = 0,\  \vartheta_{3,5} = 1,\ \vartheta_{3,6} = 1,
\end{align*}

\noindent and using the definition of $\widehat \imath$ in \eqref{Eq: def of overhat i},  we have
\begin{align*}
 \Delta(\mathbf{a}(T); 1) & = \Delta_{(\widehat{3},\widehat{4})}^{\boxtimes\vartheta_{1,3}} \boxtimes \Delta_{(\widehat{3},\widehat{5})}^{\boxtimes\vartheta_{1,4}}\\
&= \Delta_{(\overline{3},0)} \boxtimes \Delta_{(\overline{3},3)},\\
 \Delta(\mathbf{a}(T); 2) &= \Delta_{(\widehat{2},\widehat{3})}^{\boxtimes\vartheta_{2,2}} \boxtimes \Delta_{(\widehat{2},\widehat{4})}^{\boxtimes\vartheta_{2,3}}
\boxtimes \Delta_{(\widehat{2},\widehat{5})}^{\boxtimes\vartheta_{2,4}} \boxtimes \Delta_{(\widehat{2},\widehat{6})}^{\boxtimes\vartheta_{2,5}} \\
& = \Delta_{(\overline{2},\overline{3})} \boxtimes \Delta_{(\overline{2},0)} \boxtimes \Delta_{(\overline{2},3)} , \\
 \Delta(\mathbf{a}(T); 3) &= \Delta_{(\widehat{1},\widehat{2})}^{\boxtimes\vartheta_{3,1}} \boxtimes \Delta_{(\widehat{1},\widehat{3})}^{\boxtimes\vartheta_{3,2}}
\boxtimes \Delta_{(\widehat{1},\widehat{4})}^{\boxtimes\vartheta_{3,3}} \boxtimes \Delta_{(\widehat{1},\widehat{5})}^{\boxtimes\vartheta_{3,4}}
\boxtimes \Delta_{(\widehat{1},\widehat{6})}^{\boxtimes\vartheta_{3,5}} \boxtimes \Delta_{(\widehat{1},\widehat{7})}^{\boxtimes\vartheta_{3,6}} \\
& = \Delta_{(\overline{1},\overline{2})} \boxtimes \Delta_{(\overline{1},0)}   \boxtimes \Delta_{(\overline{1},2)} \boxtimes \Delta_{(\overline{1},1)} .
\end{align*}
By Theorem \ref{Thm: main theorem}, the $R^\lambda$-module $M$ corresponding to $T$ is given as follows:
\begin{align*}
M &= \Psi^\lambda(T) \\
&= \hd\,\ind \big( \Delta(\mathbf{a}(T); 1) \boxtimes \Delta(\mathbf{a}(T); 2) \boxtimes \Delta(\mathbf{a}(T); 3)  \big)  \\
&=\hd\,\ind \left(
\Delta_{(\overline{3},0)} \boxtimes \Delta_{(\overline{3},3)} \boxtimes
\Delta_{(\overline{2},\overline{3})} \boxtimes \Delta_{(\overline{2},0)} \boxtimes \Delta_{(\overline{2},3)} \boxtimes
\Delta_{(\overline{1},\overline{2})} \boxtimes \Delta_{(\overline{1},0)}  \boxtimes \Delta_{(\overline{1},2)} \boxtimes \Delta_{(\overline{1},1)}
\right).
\end{align*}
In this example,  $\eta(T) = 9 <  n\lambda(h) = 3(\Lambda_1+\Lambda_2 + 3 \Lambda_3)(2h_1 + 2 h_2 + h_3) = 21$.

Recall the definition of $\mathcal{N}_k(T)$ in $\eqref{Eq: def of Nk}$ (here the element $v$ in $B(\lambda)$ is the tableau $T$).  Then
$$\mathcal{N}_1(T) = \hd\,\ind \big( \Delta(\mathbf{a}(T); 1)\big),\quad \mathcal{N}_2(T) = \hd\,\ind \big( \Delta(\mathbf{a}(T); 2)\big),\quad \mathcal{N}_3(T) = \hd\,\ind \big(\Delta(\mathbf{a}(T); 3)\big). $$
We will prove that such a realization of  these modules exists for all finite classical types  in the next section (Lemma \ref{Lem: gluing lemma}).
In this example, we give an intuitive picture for the case of $\mathcal{N}_3(T)$. For $a,b \in \cB$ with $a \succ b$, by the definition of $\Delta_{(a,b)}$ in $\eqref{Eq: def of nabla}$ we can identify
$\Delta_{(a,b)}$ with the segment of $\cB$ between $a$ and $b$.
Then we have the following diagram.

\begin{align*}
\begin{tabular}{c l}
  $\Delta_{(\overline{1},\overline{2})}$ & $\CrystalBfi$ \\
  $\Delta_{(\overline{1},0)}$ & $\CrystalBfii$ \\
  $\Delta_{(\overline{1},2)}$ & $\CrystalBfiii$ \\
  $\Delta_{(\overline{1},1)}$ & $\CrystalBfiv$ \\  [2ex] \hline
 & \\  [-2ex]
  & \qquad  $4 \times \kf_1$ \quad \ $3 \times \kf_2$ \quad \ \qquad $5 \times \kf_3$ \ \ \quad \qquad $2 \times \kf_2$ \ \  \quad  $1 \times \kf_1$
\end{tabular}
\end{align*}
The Kashiwara operators at the bottom of the diagram are obtained by adding up vertically the number of  $i$-colored arrows in the segments.
By Proposition \ref{Prop: description of Nk}, we know
$$\mathcal{N}_3(T) = \kf_{1}^4 \kf_{2}^3 \kf_{3}^5 \kf_{2}^2 \kf_{1}^1 \mathbf{1}.$$
Since $\mathcal{N}_3(T) = \hd\,\ind \Big( \Delta_{(\overline{1},\overline{2})} \boxtimes \Delta_{(\overline{1},0)}
 \boxtimes \Delta_{(\overline{1},2)} \boxtimes \Delta_{(\overline{1},1)} \Big)$, taking the head of
$\ind\Big( \Delta_{(\overline{1},\overline{2})} \boxtimes \Delta_{(\overline{1},0)}   \boxtimes \Delta_{(\overline{1},2)} \boxtimes \Delta_{(\overline{1},1)}\Big)$
can be understood as summing vertically the $i$-colored arrows in the segments corresponding to the modules
$\Delta_{(\overline{1},\overline{2})},\  \Delta_{(\overline{1},0)},\ \Delta_{(\overline{1},2)}$ and $ \Delta_{(\overline{1},1)}$.

Now from the crystal structure in \cite{KN94}, we know that
$$ \kf_1 T = \KNT{\overline{1}}{\overline{3}}{\overline{1}}{3}{0}{\overline{3}}{2}{\overline{3}}{\overline{1}}, \qquad
\kf_2 T = \KNT{\overline{1}}{\overline{3}}{\overline{1}}{3}{0}{\overline{2}}{1}{\overline{3}}{\overline{2}}, \qquad
\kf_3 T = 0, $$
which yield $\mathbf{a}(\kf_1 T)=(3,2,1,0,5,4,7,2,1) \in \mathcal{S}^{\lambda}$, $\mathbf{a}(\kf_2 T)=(3,4,3,0,4,3,5,2,1) \in \mathcal{S}^{\lambda}$ and
\begin{align*}
 \triangle(\mathbf{a}(\kf_1 T)) = \exTriangleBBi, \qquad \triangle(\mathbf{a}(\kf_2 T)) = \exTriangleBBii.
\end{align*}
Therefore, by Theorem \ref{Thm: main theorem}, the irreducible modules $\kf_1( M ) = \hd\,\ind \left( L(1) \boxtimes M \right)$ and
$\kf_2( M ) = \hd\,\ind \left( L(2) \boxtimes M \right)$ can be gotten as follows:
\begin{align*}
\kf_1(M) &= \kf_1(\Psi^\lambda(T)) = \Psi^\lambda(\kf_1 T) \\
&=\hd\,\ind
\Big(
\Delta_{(\overline{3},0)}
\boxtimes \Delta_{(\overline{3},{3})}
\boxtimes \Delta_{(\overline{2},\overline{3})}
\boxtimes \Delta_{(\overline{2},0)}
\boxtimes \Delta_{(\overline{1},\overline{2})}
\boxtimes \Delta_{(\overline{1},0)}
\boxtimes \Delta_{(\overline{1},3)}
\boxtimes \Delta_{(\overline{1},2)}
\boxtimes \Delta_{(\overline{1},1)}
\Big), \\
\kf_2(M) &= \kf_2(\Psi^\lambda(T)) = \Psi^\lambda(\kf_2 T) \\
&=\hd\,\ind \Big(
\Delta_{(\overline{3},0)} \boxtimes \Delta_{(\overline{3},3)} \boxtimes
\Delta_{(\overline{2},\overline{3})}^{\boxtimes 2} \boxtimes \Delta_{(\overline{2},0)} \boxtimes \Delta_{(\overline{2},3)} \boxtimes
\Delta_{(\overline{1},\overline{2})} \boxtimes \Delta_{(\overline{1},0)}  \boxtimes \Delta_{(\overline{1},2)} \boxtimes \Delta_{(\overline{1},1)}
\Big).
\end{align*}

Note that by the same argument as above, one can compute the action of Kashiwara operators on $\mathbb{B}(\infty)$ explicitly
using the combinatorial realizations of $B(\infty)$ for finite classical type  in \cite{Cli98,HL08}.
\end{example}
\section{Proof of Theorem \ref{Thm: main theorem}} \label{Sec: final}

This section is devoted to a proof of Theorem \ref{Thm: main theorem}.
We first give a sufficient condition for  $\ind \left( \Delta_{(a,b)} \boxtimes \Delta_{(c,d)}\right ) \simeq \ind \left( \Delta_{(c,d)} \boxtimes \Delta_{(a,b)}\right)$ to hold  (Lemma \ref{Lem: commuting lemma}). Then we extend the result in \cite[Lem.~4.3]{KP10} for type $\mathsf{A}_n$
to the other finite classical types in Lemma \ref{Lem: seq of kf} below, and prove Lemma \ref{Lem: gluing lemma} which yields $\mathcal{N}_n(v) = \hd\,\ind \Delta(\mathbf{a}(v);n)$
for $v \in B(\infty)$. From that we can deduce $\mathcal{N}_k(v) = \hd\,\ind \Delta(\mathbf{a}(v);k)$ for all $k$.
Using Lemma \ref{Lem: sum of fi}, Proposition \ref{Prop: crystal iso},  and  Lemma \ref{Lem: gluing lemma},
we argue that the map $\Psi: B(\infty) \longrightarrow \B(\infty)$ is a crystal isomorphism. It then follows
from the crystal embedding $\iota_\lambda: B(\lambda) \rightarrow B(\infty)\otimes T_\lambda \otimes C$ in \eqref{Eq: crystal embedding} that
the map $\Psi^\lambda: B(\lambda) \longrightarrow \B(\lambda)$ is a crystal isomorphism too.

Given $\mathbf{i} \in I^\alpha$ and $\mathbf{j} \in
I^\beta$, a sequence $\mathbf{k}\in I^{\alpha+\beta}$ is
called a {\em shuffle} of $\mathbf{i}$ and $\mathbf{j}$ if
$\mathbf{k}$ is a permutation of $\mathbf{i} * \mathbf{j}$ such that
$\mathbf{i}$ and $\mathbf{j}$ are subsequences of $\mathbf{k}$.
For $X = \sum x_\mathbf{i}\ \mathbf{i}$ and $Y = \sum
y_\mathbf{j}\ \mathbf{j}$, we define $X \star Y$ by $$ X \star Y = \sum_{\mathbf{k}}
x_\mathbf{i} y_\mathbf{j}    \ \mathbf{k}, $$ where
$\mathbf{k}$ runs over all the shuffles of $\mathbf{i}$ and
$\mathbf{j}$.  Suppose $M\in R(\alpha)\fmod$ and $N \in R(\beta)\fmod$, and assume
$Q$ is a quotient (resp. $L$ is a submodule) of $\ind(M \boxtimes N)$.
Then
\begin{align}
&\text{$ \ch\big(\ind(M \boxtimes N)\big) = \ch(M) \star \ch(N),$ } \label{Eq: shuffle 1} \\
& \text{any term of $\ch Q$ (resp.\ $\ch L$) is a shuffle of some $\mathbf{i} \in \ch (M)$ and $\mathbf{j} \in \ch (N)$.} \label{Eq: shuffle 2}
\end{align}
The quantum Serre relations $\eqref{Eq: Serre}$ imply that
\begin{align}
& \dm (\e(\ldots, i,j,\ldots) M) =   \dm(\e(\ldots,j,i, \ldots) M) \qquad  \hspace{2.1truecm}  \text{ if } {a}_{ij} = 0, \label{Eq: Serre1}  \\
&  2 \dm (\e(\ldots, i,j,i,\ldots) M)  \label{Eq: Serre2}  \\
& \ \ \ \  = \dm(\e(\ldots,  i,i,j,\ldots) M)  + \dm (\e(\ldots, j,i,i,\ldots) M) \qquad \ \  \ \text{ if } {a}_{ij} = -1,
\nonumber  \\
&\dm  (\e(\ldots,  i,i,i,j,\ldots) M)  + 3 \dm (\e(\ldots, i,j,i,i,\ldots) M) \label{Eq: Serre3}  \\
& \ \   \ \   = \dm(\e(\ldots, j,i,i,i,\ldots) M) + 3 \dm (\e(\ldots, i,i,j,i,\ldots) M)  \ \, \ \text{ if } {a}_{ij} = -2,  \nonumber
\end{align}
for $M \in R\fmod$.

Using the description of $\Delta_{(a,b)}$ in Section \ref{Sec: construction for classical type}, we obtain a surjective homomorphism
\begin{align} \label{Eq: surj between nablas}
\ind\left (\Delta_{(a,b)} \boxtimes \Delta_{(b,c)}\right) \twoheadrightarrow \Delta_{(a,c)}
\end{align}
for $a,b,c \in \cB$ with $a \succ b \succ c$.



\begin{lemma} \label{Lem: commuting lemma}
Let $a, b, c, d \in \cB$ with $a \succeq c  \succ d \succeq b$. Suppose that one of
the following conditions hold:
\begin{enumerate}
\item $\mathfrak{A}$ is of type $\mathsf{A}_n$,
\item $\mathfrak{A}$ is of type $\mathsf{B}_n$ and one of the following three holds:
\begin{enumerate}
\item[(i)] $0 \succeq a$ and $c \ne 0$,
\item[(ii)] $b \succeq 0$ and $d \ne 0$,
\item[(iii)] $a \succeq \overline{n} $, ${n}\succeq b  $ and either $c=0$ or $d=0$,
\end{enumerate}
\item $\mathfrak{A}$ is of type $\mathsf{C}_n$ and one of the following three holds:
\begin{enumerate}
\item[(i)] $b \succeq \overline{n}$,
\item[(ii)] $a=c= \overline{n}$,
\item[(iii)] $a=\overline{1}$ and $c= \overline{n}$,
\end{enumerate}
\item $\mathfrak{A}$ is of type $\mathsf{D}_n$ and one of the following three holds:
\begin{enumerate}
\item[(i)] $b \succeq \overline{n-1}$,
\item[(ii)] $a=c= \overline{n-1}$,
\item[(iii)] $a=\overline{1}$ and $c= \overline{n-1}$.
\end{enumerate}
\end{enumerate}
Then
$$ \ind \left(\Delta_{(a, b)} \boxtimes \Delta_{(c,d)}\right) \simeq \ind\left(\Delta_{(c, d)} \boxtimes \Delta_{(a,b)}\right)$$
and these modules are irreducible.
\end{lemma}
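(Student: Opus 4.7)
The plan is to prove irreducibility of $N := \ind\bigl(\Delta_{(a,b)} \boxtimes \Delta_{(c,d)}\bigr)$ first, and then to deduce the isomorphism with the reversed induction $\ind\bigl(\Delta_{(c,d)} \boxtimes \Delta_{(a,b)}\bigr)$ as a formal consequence of Frobenius reciprocity.

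For irreducibility, I would begin by expanding $\ch N = \ch \Delta_{(a,b)} \star \ch \Delta_{(c,d)}$ via the shuffle formula $\eqref{Eq: shuffle 1}$ and the explicit characters recorded in $\eqref{Eq: cha of nabla}$. Lemma \ref{Lem: Frobenius reciprocity}(1) gives $N \simeq \coind\bigl(\Delta_{(c,d)} \boxtimes \Delta_{(a,b)}\bigr)$, so Frobenius adjunction forces that, for any nonzero submodule $L \hookrightarrow N$, the sequence $\mathbf{i}(c,d) * \mathbf{i}(a,b)$ appears in $\ch L$, and for any nonzero quotient $N \twoheadrightarrow Q$ the sequence $\mathbf{i}(a,b) * \mathbf{i}(c,d)$ appears in $\ch Q$. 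The substance of the argument is then to show, case by case, that both distinguished sequences belong to a \emph{single} irreducible composition factor of $N$, appearing with multiplicity one. I would do this by combining a direct count of shuffles in $\eqref{Eq: shuffle 1}$ with the dimension identities supplied by the quantum Serre relations $\eqref{Eq: Serre1}$--$\eqref{Eq: Serre3}$, which control how the $1$- and $2$-dimensional pieces of the characters in $\eqref{Eq: cha of nabla}$ interact. Once this is established, every nonzero subquotient of $N$ contains the same irreducible with the same multiplicity, so $N$ is forced to be irreducible.

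For the isomorphism, applying the second Frobenius adjunction of Lemma \ref{Lem: Frobenius reciprocity}(2) to the canonical projection $\res\,\ind(\Delta_{(c,d)} \boxtimes \Delta_{(a,b)}) \twoheadrightarrow \Delta_{(c,d)} \boxtimes \Delta_{(a,b)}$, and then invoking $\coind \simeq \ind$ from Lemma \ref{Lem: Frobenius reciprocity}(1), produces a nonzero homomorphism
\[
\ind\bigl(\Delta_{(c,d)} \boxtimes \Delta_{(a,b)}\bigr) \longrightarrow \coind\bigl(\Delta_{(c,d)} \boxtimes \Delta_{(a,b)}\bigr) \simeq \ind\bigl(\Delta_{(a,b)} \boxtimes \Delta_{(c,d)}\bigr).
\]
Since the target is irreducible by the first step and the two modules share the same ungraded character (the shuffle product is commutative), this map must be an isomorphism up to a grading shift.

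The principal obstacle is the multiplicity-counting inside the irreducibility step. Conditions (1)--(4) are tailored precisely to exclude configurations where the exceptional crystal nodes---$0$ in type $\mathsf{B}_n$, and the pair $\{\overline{n}, n\}$ in types $\mathsf{C}_n$ and $\mathsf{D}_n$---cause a ``resonance'' between $\Delta_{(a,b)}$ and $\Delta_{(c,d)}$: without these restrictions, the $2$-dimensional characters appearing in $\eqref{Eq: cha of nabla}$ conspire with intermediate shuffles to introduce additional irreducible composition factors in $N$. Working through the roughly ten listed subcases, and carefully tracking how the $2$-dimensional branches through $0$ and $\{\overline{n}, n\}$ behave under the Serre relations in each of them, will constitute the technical heart of the proof.
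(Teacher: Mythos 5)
Your overall framework is right: control every nonzero submodule and quotient via the two Frobenius adjunctions of Lemma~\ref{Lem: Frobenius reciprocity}, use $\eqref{Eq: cha of nabla}$ and the quantum Serre relations to pin down characters, and then deduce the isomorphism from irreducibility. Your concluding step (a nonzero map $\ind(\Delta_{(c,d)} \boxtimes \Delta_{(a,b)}) \to \coind(\Delta_{(c,d)} \boxtimes \Delta_{(a,b)}) \simeq \ind(\Delta_{(a,b)} \boxtimes \Delta_{(c,d)})$ which must be an isomorphism since the target is irreducible and the shuffle product is commutative) is correct and is, if anything, a cleaner articulation than the paper's terse ``since $\ch M = \ch N$.''

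However, the technical heart of the irreducibility argument is left in a form that is both vague and, as stated, inaccurate. You propose to show that ``both distinguished sequences belong to a single irreducible composition factor of $N$, appearing with multiplicity one.'' This is not what the paper does, and the multiplicity-one claim is wrong: the relevant multiplicities in the paper's computations are of the form $2^l$, $2^{l-1}\cdot 3!$, and so on. More importantly, the device that actually makes the argument work is not present in your sketch. The paper's key move is to pick, for each of the ten-odd subcases, a single \emph{intermediate} shuffle $\mathbf{k}$ (roughly, the interleaving in which the entries of $\mathbf{i}(a,b)$ and $\mathbf{i}(c,d)$ are paired up, e.g.\ $(i,\dots,p,p,p+1,p+1,\dots,n-1,n-1,n,n,n,n-1,\dots,j)$ in type $\mathsf{B}_n$) whose multiplicity $\xi_M$ in $\ch M$ can be computed directly from $\eqref{Eq: shuffle 1}$, and then to show by repeated application of $\eqref{Eq: Serre1}$--$\eqref{Eq: Serre3}$ that $\mathbf{k}$ appears in $\ch Q$ with multiplicity $\ge \xi_M$ whenever $\mathbf{i}(a,b) * \mathbf{i}(c,d) \in \ch Q$, and in $\ch L$ with multiplicity $\ge \xi_M$ whenever $\mathbf{i}(c,d) * \mathbf{i}(a,b) \in \ch L$. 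Since every subquotient has multiplicity $\le \xi_M$ automatically, a proper submodule $L$ and its quotient $Q=M/L$ would force $\xi_M = 2\xi_M$, a contradiction. Your proposal, by contrast, would require first identifying a particular irreducible composition factor $S$, proving it has multiplicity one in $M$, \emph{and} proving that $\mathbf{i}(a,b)*\mathbf{i}(c,d)$ and $\mathbf{i}(c,d)*\mathbf{i}(a,b)$ occur in $\ch S$ and in no other composition factor of $M$ --- none of which you outline how to establish, and the last of which does not hold in the stated form (the distinguished sequences can occur with multiplicity greater than one and in more than one composition factor of a general module). Without the middle-shuffle device and the explicit multiplicity count, there is no route from ``Frobenius forces the distinguished sequences into $\ch L$ and $\ch Q$'' to irreducibility.
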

\begin{proof}
Let
$$ M =  \ind \left( \Delta_{(a, b)} \boxtimes \Delta_{(c,d)}\right), \quad N = \ind \left( \Delta_{(c, d)} \boxtimes \Delta_{(a,b)}\right),$$
and let $Q$ (resp.\ $L$) be any nonzero quotient (resp.\ submodule) of $M$. We will assume that $\mathbf{k}$
is a certain sequence of elements  in $I$ such that $ \mathbf{k} \in \ch M$, $ \mathbf{k} \in \ch Q$, and $ \mathbf{k} \in \ch L $.
Let $\xi_M$ (resp.\ $\xi_Q$, $\xi_L$) be the multiplicity of $\mathbf{k}$ in $\ch M$ (resp.\ $\ch Q$, $\ch L$).
If we can show that $\xi_M = \xi_Q = \xi_L$, we can conclude that $M$ is irreducible  and $M \simeq N$, since $\ch M = \ch N$.

For type $\mathsf{A}_n$, the assertion follows from \cite[Lem.~4.1]{KP10}.
We assume that $\mathfrak{A}$ is of type $\mathsf{B}_n$.
If either (i) or (ii) holds, the proof is identical to that for type $\mathsf{A}_n$. So suppose that (iii) holds. Without loss
of generality, we may assume that $d=0$. Write $\mathbf{i} = \mathbf{i}(a,b) = (i,\ldots, n,n,\ldots, j) $,
$\mathbf{j} = \mathbf{i}(c,d) = (p,p+1,\ldots, n-1, n)$, and let
$$ \mathbf{k} = (i,\ldots, p,p,p+1,p+1, \ldots, n-1,n-1,n,n,n,n-1,n-2, \ldots, j).$$
Note that $\mathbf{k}$ is a shuffle of $\mathbf{i}$ and $\mathbf{j}$. Let $l$ be
the length of $\mathbf{j}$.
It follows from $\eqref{Eq: cha of nabla}$ and $\eqref{Eq: shuffle 1}$ that
the term $\mathbf{k}$ appears in $\ch M$ with multiplicity $\xi_M  = 2 \times 2^{l-1} 3$.

By Lemma \ref{Lem: Frobenius reciprocity} and $\eqref{Eq: cha of nabla}$,
$\mathbf{i} * \mathbf{j}$ appears in the character $\ch Q$ of any quotient module $Q \subseteq M$.  We claim that $\mathbf{k}$ also appears in $\ch Q$.
Note that $a_{n, n-1}=-2$ for type $\mathsf{B}_n$.

We first assume that $ p \ne n-1,n.$
By the quantum Serre relations $\eqref{Eq: Serre}$, if $ j > p+1$, then
\begin{align*}
\mathbf{i} * \mathbf{j} =& (i,\ldots, n,n,\ldots, j, p,p+1, \ldots, n) \in \ch Q , \\
\fourthree &\ (i,\ldots,p, p+1, p, \ldots, n,n,\ldots, j, p+1, \ldots, n) \in \ch Q, \\
\ftf &\ (i,\ldots,p, p, p+1, \ldots, n,n,\ldots, j, p+1, \ldots, n) \in \ch Q.
\end{align*}
If $ j = p+1 $, then
\begin{align*}
\mathbf{i} * \mathbf{j} =& (i,\ldots, n,n,\ldots, p+1, p,p+1, \ldots, n) \in \ch Q , \\
\ftf &\ (i,\ldots, n,n,\ldots, p, p+1,p+1, \ldots, n) \in \ch Q, \\
\fourthree &\ (i,\ldots, p, p+1,p,\ldots,  n,n,\ldots, p+1,p+1, \ldots, n) \in \ch Q,\\
\ftf &\ (i,\ldots, p, p,p+1, \ldots, n,n,\ldots, p+1,p+1, \ldots, n) \in \ch Q.
\end{align*}
If $ j = p $, then
\begin{align*}
\mathbf{i} * \mathbf{j} =& (i,\ldots, n,n,\ldots, p+1, p, p,p+1 \ldots, n) \in \ch Q , \\
\fourfour &\ (i,\ldots, n,n,\ldots, p, p+1,p, p+1, \ldots, n) \in \ch Q, \\
\fourthree &\ (i,\ldots, p, p+1,p, \ldots, n,n,\ldots, p,p+1, \ldots, n) \in \ch Q, \allowdisplaybreaks  \\
\ftf &\ (i,\ldots, p, p,p+1, \ldots, n,n,\ldots, p,p+1, \ldots, n) \in \ch Q.
\end{align*}
If $ j < p $, then
\begin{align*}
\mathbf{i} * \mathbf{j} =& (i,\ldots, n,n,\ldots, j, p,p+1 \ldots, n) \in \ch Q , \\
\fourfour &\ (i,\ldots, n,n,\ldots, p+1, p,p-1, p, \ldots,j,p+1,\ldots, n) \in \ch Q, \\
\ftf &\ (i,\ldots, n,n,\ldots, p+1, p,p, p-1, \ldots,j,p+1,\ldots, n) \in \ch Q, \\
\fourfour &\ (i,\ldots,  n,n,\ldots, p,p+1,p,p-1, \ldots,j,p+1,\ldots, n) \in \ch Q,\\
\fourthree &\ (i,\ldots, p, p+1,p, \ldots, n,n,\ldots, ,j,p+1,\ldots, n) \in \ch Q,\\
\ftf &\ (i,\ldots, p, p,p+1, \ldots, n,n,\ldots, ,j,p+1,\ldots, n) \in \ch Q.
\end{align*}
Applying this argument repeatedly, we determine that
\begin{align*}
& \hspace{-.8truecm} (i,\ldots, p, p, \ldots,n-2,n-2, n-1, n,n, \ldots  ,j, n-1, n) \in \ch Q \\
& \ftthf  (i,\ldots, p, p, \ldots,n-1, n,n,n-1,n-1, \ldots  ,j,  n) \in \ch Q,  \\
& \ftf \   (i,\ldots, p, p, \ldots,n-1, n,n-1,n,n-1, \ldots  ,j,  n) \in \ch Q  \\
&\ftf  (i,\ldots, p, p, \ldots,n-1, n-1,n,n,n-1, \ldots  ,j,  n) \in \ch Q, \allowdisplaybreaks \\
& \fourthree \  (i,\ldots, p, p, \ldots,n-1, n-1,n,n,n-1,  n, \ldots  ,j) \in \ch Q, \\
&  \ftfi  \  \mathbf{k} = (i,\ldots, p, p, \ldots,n-1, n-1,n,n,n,n-1, \ldots  ,j) \in \ch Q.
\end{align*}
We suppose that $ p = n-1,n$. Then, the same argument as above gives
$$
\mathbf{k} = (i,\ldots, n-1, n-1,n,n,n,n-1, \ldots  ,j) \in \ch Q.
$$

Now for any nonzero submodule $L$ of $M$, $\ch L$ contains $\mathbf{j}*\mathbf{i}$ by Lemma \ref{Lem: Frobenius reciprocity}
and $\eqref{Eq: cha of nabla}$. If $a = c$, then
\begin{align*}
 & \mathbf{j} * \mathbf{i}  = (i,i+1, \ldots, n, i,\ldots, n,n,\ldots, j) \in \ch L , \\
& \quad \  \fourthree  \ (i,i+1, i,  \ldots, n, i+1,\ldots, n,n,\ldots, j) \in \ch L,  \\
& \ \  \ftf \ (i,i, i+1 ,  \ldots, n, i+1,\ldots, n,n,\ldots, j) \in \ch L,   \\
 &\hspace{4.5truecm}  \vdots \\
& \ \  \ftf \ (i,i, i+1 , i+1,  \ldots, n-2,n-2, n-1, n, n-1,n,n,\ldots, j) \in \ch L,    \\
& \ \  \ftf \ \mathbf{k} = (i,i, i+1 , i+1,  \ldots, n-2,n-2, n-1, n-1, n,n,n,\ldots, j) \in \ch L. \allowdisplaybreaks
\end{align*}
If $a \succ c$, then
\begin{align*}
& \mathbf{j} * \mathbf{i}  = (p, \ldots, n-1, n, i,\ldots, n,n,\ldots, j) \in \ch L ,   \\
& \quad \fourthree \ (p,  \ldots, n-1, i,\ldots, n, n-1, n,n,\ldots, j) \in \ch L,  \\
& \  \ftfi \ (p,  \ldots, n-1, i,\ldots, n-1, n, n,n,\ldots, j) \in \ch L, \\
&\hspace{4.5truecm}  \vdots \\
& \  \ftf \ \mathbf{k} = (i,\ldots, p , p,p+1 , p+1,  \ldots, n-1,
n-1, n,n,n,\ldots, j) \in \ch L. \end{align*} Hence, $\ch L$
contains $\mathbf{k}$ in either event.   From the structure  of
$\mathbf{k}$ we see that it occurs in $\ch Q$ (resp.\ $\ch L$) with
multiplicity $\xi_Q \ge 2^{l-1} 3!$ (resp.\ $\xi_L \ge 2^{l-1} 3!$).
Therefore, since $\xi_M =  2 \times 2^{l-1} 3$, we have $\xi_M =
\xi_Q = \xi_L$,  which implies the result for type
$\mathsf{B}_n$.

Now assume that $\mathfrak{A}$ is of type $\mathsf{C}_n$.
If either (i) or (ii) holds, the proof is identical to that for type $\mathsf{A}_n$. We assume that (iii) holds.
Write $\mathbf{i} = \mathbf{i}(a,b) = (1,2,\ldots,n,\ldots, j)$, $\mathbf{j} = \mathbf{i}(c,d) = (n,n-1,\ldots, q)$.  Then
$$
\mathbf{k} := (1,2, \ldots, n-1, n,n,n-1,n-1, \ldots, q,q,\ldots, j).
$$  is a shuffle of $\mathbf{i}$ and $\mathbf{j}$. Let $l$ be the length of $\mathbf{j}$.
Then  $\eqref{Eq: cha of nabla}$ and $\eqref{Eq: shuffle 1}$ imply that the term $\mathbf{k}$ appears in $\ch M$ with
multiplicity $\xi_M = 2^l$.

For any nonzero quotient $Q$ of $M$, $\ch Q$ contains $\mathbf{i}*\mathbf{j}$
by $\eqref{Eq: cha of nabla}$ and Lemma \ref{Lem: Frobenius reciprocity}.
We claim that $\mathbf{k}$ occurs in $\ch Q$ as a term.  Note that $a_{n-1, n}=-2$.
The quantum Serre relations $\eqref{Eq: Serre}$ and $\eqref{Eq: shuffle 2}$ imply
\begin{align*}
\mathbf{i}*\mathbf{j}\ = & \ \  (1,\ldots,n,\ldots,j,n,\ldots,q) \in \ch Q,  \\
  \fourthree &\  \ (1,\ldots,n-1,n,n-1,n,\ldots,j,n-1,\ldots, q) \in \ch Q, \\
 \ftf &\  \ (1,\ldots,n-1,n,n,n-1,\ldots,j,n-1,\ldots, q) \in \ch Q.
\end{align*}
Continuing this reasoning gives
$$ \mathbf{k} = (1, \ldots, n,n, \ldots, q,q,\ldots, j) \in \ch Q. $$

For any nonzero submodule $L$ of $M$, $\ch L$ contains $\mathbf{j}*\mathbf{i}$ by Lemma \ref{Lem: Frobenius reciprocity}
and $\eqref{Eq: cha of nabla}$.
Then, using a similar argument, we have
\begin{align*}
\mathbf{j}*\mathbf{i}\ = & \ \  (n,\ldots,q,1,2,\ldots,n,\ldots,j) \in \ch L, \\
\ftthf &(n,n-1,1,\ldots, n-1,n,n-1,\ldots,q,q,\ldots, j) \in \ch L,   \\
\ftthf &(n,1,\ldots,n-2,n-1, n-1,n,n-1,\ldots,q,q,\ldots, j) \in \ch L,  \allowdisplaybreaks\\
\ftfi &\ (n,1,\ldots,n-2,n-1, n,n-1,n-1,\ldots,q,q,\ldots, j) \in \ch L, \\
\fourthree &\ (1,\ldots,n,n-1, n,n-1,n-1,\ldots,q,q,\ldots, j) \in \ch L,  \\
\ftf &\ \mathbf{k} = (1,\ldots,n, n,\ldots,q,q,\ldots, j) \in \ch L.
\end{align*}
As  $\mathbf{k}$
occurs in $\ch Q$ (resp.\ $\ch L$) with multiplicity $\xi_Q \ge 2^l$ (resp.\ $\xi_L \ge 2^l$), and
since $\xi_M= 2^l $, the assertion follows from $\xi_M = \xi_Q = \xi_L$.

Lastly, we assume that $\mathfrak{A}$ is of type $\mathsf{D}_n$.
If (i) holds, then the proof is identical to that for type $\mathsf{A}_n$.

Suppose that (ii) holds.
Write $\mathbf{i} = \mathbf{i}(a,b) = (n,n-1,n-2, \ldots, j)$, $\mathbf{j} = \mathbf{i}(c,d) = (n,n-1,n-2,\ldots, q)$.
Then
$$
\mathbf{k} := (n,n,n-1,n-1,n-2,n-2, \ldots, q,q, \ldots, j).
$$
is a shuffle of $\mathbf{i}$ and $\mathbf{j}$. Let $l$ be the length of $\mathbf{j}$.
The equations  $\eqref{Eq: cha of nabla}$ and $\eqref{Eq: shuffle 1}$ imply that the term $\mathbf{k}$ appears in $\ch M$ with
multiplicity $\xi_M = 2^l$.

By Lemma \ref{Lem: Frobenius reciprocity} and $\eqref{Eq: cha of nabla}$,
$\mathbf{i} * \mathbf{j}$ appears in the character $\ch Q$ of any quotient module $Q \subseteq M$. We will show that $\mathbf{k}$ also appears in $\ch Q$.
Note  that $ a_{n, n-2}= a_{n-2, n} = a_{n-1, n-2} = a_{n-2, n-1} = -1$
and $a_{n-1,n} = a_{n,n-1} = 0$. The quantum Serre relations $\eqref{Eq: Serre}$ imply
\begin{equation*}
\begin{aligned}
\mathbf{i}*\mathbf{j}\ = & \ \  (n,n-1,n-2\ldots,j,n,n-1,n-2,\ldots,q) \in \ch Q,  \\
  \fourthree &\  \ (n-1,n,n-2, n,\ldots,j, n-1,n-2,\ldots,q) \in \ch Q, \\
   \ftthf &\  \ (n,n,n-1, n-2,\ldots,j, n-1,n-2,\ldots,q) \in \ch Q, \\
  \ftf &\  \ (n,n,n-1, n-2,n-1\ldots,j, n-2,\ldots,q) \in \ch Q, \\
    \ftfi &\  \ (n,n,n-1, n-1,n-2\ldots,j, n-2,\ldots,q) \in \ch Q.
\end{aligned}
\end{equation*}
Continuing this reasoning gives
$$ \mathbf{k} = (n,n,n-1,n-1,n-2,n-2, \ldots, q,q, \ldots, j) \in \ch Q. $$
For any nonzero submodule $L$, since $\ch L$  contains $\mathbf{j} * \mathbf{i}$ by Lemma \ref{Lem: Frobenius reciprocity}, the same argument shows
that $\ch L$ contains $\mathbf{k}$. Since  $\mathbf{k}$
occurs in $\ch Q$ (resp.\ $\ch L$) with multiplicity $\xi_Q \ge 2^l$ (resp.\ $\xi_L \ge 2^l$), the assertion follows from $\xi_M = \xi_Q = \xi_L$.

We now assume that (iii) holds.
Since the proof is identical to that for type $\mathsf{A}_n$ if $b=\overline{n}$ or $b=n$, we may assume that $n-1 \succeq b$.
Write $\mathbf{i} = \mathbf{i}(a,b) = (1, \ldots, n-2,n,n-1,n-2, \ldots, j)$, $\mathbf{j} = \mathbf{i}(c,d) = (n,n-1,n-2,\ldots, q)$ and
$$
\mathbf{k} := (1, \ldots, n-2, n,n,n-1,n-1,n-2,n-2, \ldots, q,q, \ldots, j).
$$
Let $l$ be the length of $\mathbf{j}$. Then $\mathbf{k}$ is a shuffle of $\mathbf{i}$ and $\mathbf{j}$ and appears in $\ch M$ with
multiplicity $\xi_M = 2^l$.

For any nonzero quotient $Q$ of $M$, $\ch Q$ contains $\mathbf{i}*\mathbf{j}$
by $\eqref{Eq: cha of nabla}$ and Lemma \ref{Lem: Frobenius reciprocity}.
Using the same argument as in case (ii), we obtain
$$ \mathbf{k} = (1, \ldots, n-2,n,n,n-1, n-1,n-2\ldots,j, n-2,\ldots,q) \in \ch Q. $$

For any nonzero submodule $L$ of $M$, $\ch L$ contains $\mathbf{j}*\mathbf{i}$ by Lemma \ref{Lem: Frobenius reciprocity}
and $\eqref{Eq: cha of nabla}$. By the same argument as in the $\mathsf{C}_n$ case, we have
\begin{equation*}
\begin{aligned}
\mathbf{j}*\mathbf{i}\ = & \ \  (n,\ldots,q-1,q, 1, \ldots, n-2, n,n-1,n-2\ldots,j) \in \ch L,   \\
     \ftthf &\  (n,n-1,n-2,1,\ldots, n-2,n,n-1,n-2,\ldots,q,q,\ldots, j)   \in \ch L, \\
     \ftf &\  (n,n-1,1,\ldots,n-2,n-2,n,n-1,n-2,\ldots,q,q,\ldots, j)   \in \ch L, \\
     \fourfour &\   (n,n-1,1,\ldots,n-2,n,n-2, n-1,n-2,\ldots,q,q,\ldots, j) \in \ch L, \\
     \ftf &\   (n,n-1,1,\ldots,n-2,n,n-1,n-2, n-2,\ldots,q,q,\ldots, j) \in \ch L, \\
     \fourthree &\   (n,1,\ldots,n-1,n-2,n-1,n,n-2, n-2,\ldots,q,q,\ldots, j) \in \ch L, \\
     \ftf &\   (n,1,\ldots,n-2,n,n-1,n-1,n-2, n-2,\ldots,q,q,\ldots, j) \in \ch L, \\
     \fourthree  &\  (1,\ldots,n,n-2,n,n-1,n-1,n-2, n-2,\ldots,q,q,\ldots, j) \in \ch L, \\
     \ftf  &\  \mathbf{k} = (1,\ldots,n-2,n,n,n-1,n-1,n-2, n-2,\ldots,q,q,\ldots, j) \in \ch L. \\
\end{aligned}
\end{equation*}

Since the sequence  $\mathbf{k}$ occurs in $\ch Q$
(resp.\ $\ch L$) with multiplicity $\xi_Q \ge 2^l$ (resp.\ $\xi_L
\ge 2^l$), by $\xi_M = 2^l $, the assertion follows
from $\xi_M = \xi_Q = \xi_L$.
\end{proof}

For $a,b \in \cB$, we define
\begin{equation} \label{eq:delta} \delta(a\succeq b) = \left\{
                   \begin{array}{ll}
                     1 & \hbox{if } a \succeq b, \\
                     0 & \hbox{otherwise}.
                   \end{array}
                 \right.
\end{equation}
In the $\mathsf{A}_n$ case, it follows from \cite[Lem.~4.3]{KP10} that, for $d_k \in \Z_{\ge0}$ and $b_k \in \cB$ with $b_1 \succ b_2 \succ \cdots \succ b_m$,
\begin{align} \label{Eq: seq of kf for An}
\kf_1^{t_1} \kf_2^{t_2} \cdots \kf_n^{t_n} \mathbf{1} \simeq \ind \Big(\dbt{k=1}{m} \Delta_{(\overline{1},b_k)}^{\boxtimes d_k}\Big),
\end{align}
where $t_i = \sum_{j=1}^m d_j \delta(\overline{\imath} \succ b_j)$ for $i=1,\ldots, n$.
Next we will extend this result to the other finite classical types.
We first divide the crystal $\cB$ for types $\mathsf{B}_n, \mathsf{C}_n, \mathsf{D}_n$ into two pieces so that
each part is almost the same as the crystal or the dual of the crystal
for type $\mathsf{A}$.
For type $\mathsf{B}_n$ (resp.\ $\mathsf{C}_n$, $\mathsf{D}_n$), the crystal $\cB$ is cut at the element $0$ (resp.\ $\overline{n}$, $\overline{n-1}$).
In the next result  we give an analogue of $\eqref{Eq: seq of kf for An}$ for each part.

\begin{lemma} \label{Lem: seq of kf}
Let $b_k \in \cB$, and $d_k \in \Z_{\ge 0}$ for $k=1, \ldots, m$. Set
$$ b_{\bullet} = \left\{
                   \begin{array}{ll}
                     \overline{n+1} & \qquad  (\mathsf{A}_n), \\
                     0 & \qquad (\mathsf{B}_n),\\
                     \overline{n} & \qquad (\mathsf{C}_n),\\
                     \overline{n-1} & \qquad (\mathsf{D}_n).
                   \end{array}
                 \right.  $$
\begin{enumerate}
\item If $b_1 \succ b_2 \succ \cdots \succ b_m \succeq b_{\bullet}$, then
$$ \kf_1^{t_1} \kf_2^{t_2} \cdots \kf_l^{t_l} \mathbf{1} \simeq \ind \Big(\dbt{k=1}{m} \Delta_{(\overline{1},b_k)}^{\boxtimes d_k}\Big) ,$$
where $ l = n\  \ (\mathsf{A}_n, \mathsf{B}_n)$,  \ $l = n-1\ \ (\mathsf{C}_n)$, \ $ l = n-2\ \ (\mathsf{D}_n)$ and $t_i := \sum_{j=1}^m d_j \delta(\overline{\imath} \succ b_j)$ for $i=1,\ldots, l$.
\item Let
$$ t_i = \left\{
           \begin{array}{ll}
             \sum_{j=1}^m d_j \delta({i} \succeq b_j) & \hbox{ if } \  i=1,\ldots,n \ (\mathsf{B}_n, \mathsf{C}_n) ,\ \ i=1,\ldots,n-2\ (\mathsf{D}_n),\\
             \sum_{j=1}^m d_j \delta(\overline{n} \succeq b_j) & \hbox{ if } \   i=n-1\ (\mathsf{D}_n),\\
             \sum_{j=1}^m d_j \delta({n} \succeq b_j) & \hbox{ if }\  i=n\ (\mathsf{D}_n).
           \end{array}
         \right.
 $$
Then we have
\begin{enumerate}
\item[(i)] if $b_{\bullet} \succ b_1 \succ b_2 \succ \cdots \succ b_m $ for type $\mathsf{B}_n$, then
$$ \kf_n^{s+t_n} \kf_{n-1}^{t_{n-1}} \cdots \kf_1^{t_1} \mathbf{1} \simeq \hd\,\ind \left( L(n^s) \boxtimes  \Big( \dbt{k=1}{m} \Delta_{(b_{\bullet},b_k)}^{\boxtimes d_k}\Big)\right)
\ \ \text{ for all } s \in \Z_{\ge0}, $$
and the head occurs with multiplicity one as a composition factor of
 $$\ind \left( L(n^s) \boxtimes  \Big( \dbt{k=1}{m} \Delta_{(b_{\bullet},b_k)}^{\boxtimes d_k}\Big)\right),$$
\item[(ii)] if $ b_{\bullet} \succ  b_m \succ b_{m-1} \succ \cdots \succ b_1 $ for type $\mathsf{C}_n, \mathsf{D}_n$, then
$$ \kf_n^{t_n} \cdots \kf_2^{t_2} \kf_{1}^{t_{1}} \mathbf{1} \simeq \ind \Big(\dbt{k=1}{m} \Delta_{(b_{\bullet}, b_k)}^{\boxtimes d_k}\Big).$$
\end{enumerate}
\end{enumerate}
\end{lemma}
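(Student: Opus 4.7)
The plan is to reduce both parts to the type-$\mathsf{A}_n$ identity \eqref{Eq: seq of kf for An}, using Lemma \ref{Lem: commuting lemma} to rearrange factors, Lemma \ref{Lem: irr of induced module} to establish irreducibility of the induced modules, and Lemma \ref{Lem: sum of fi} to recognize the head as a specific sequence of Kashiwara operators applied to $\mathbf{1}$. For part (2)(i) the head statement requires extra care because the induced module is typically reducible.

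For part (1), observe that for every $b_k \succeq b_\bullet$ the sequence $\mathbf{i}(\overline{1}, b_k)$ is of the form $(1,2,\ldots, r_k)$ with $r_k \le l$, so only indices in $\{1,\ldots,l\}$ appear in $\ch \Delta_{(\overline{1}, b_k)}$. Consequently the outer tensor product $\dbt{k=1}{m} \Delta_{(\overline{1},b_k)}^{\boxtimes d_k}$ sits over a sub-KLR algebra whose relations agree with those of $R$ of type $\mathsf{A}_l$ on all sequences appearing in its character (the extra edge in types $\mathsf{B}_n, \mathsf{C}_n, \mathsf{D}_n$ is never triggered). Hence \eqref{Eq: seq of kf for An} applies verbatim and yields the stated identification, with $t_i = \sum_j d_j\, \delta(\overline{\imath} \succ b_j)$ counting the total number of $i$-colored arrows appearing across the segments.

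For part (2)(ii) (types $\mathsf{C}_n$ and $\mathsf{D}_n$), the hypotheses $b_\bullet \succ b_m \succ \cdots \succ b_1$ place us in cases (3)(ii) and (4)(ii) of Lemma \ref{Lem: commuting lemma} respectively, so the factors $\Delta_{(b_\bullet, b_k)}$ commute up to isomorphism. I proceed by induction on $\sum_k d_k$: the base case $m=1$, $d_1=1$ is a direct calculation giving $\Delta_{(b_\bullet, b_1)} \simeq \kf_n^{a_n}\cdots \kf_1^{a_1}\mathbf{1}$ with $a_i = \delta(i \succeq b_1)$ (after the appropriate reindexing for the spin nodes in type $\mathsf{D}_n$). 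For the inductive step, use Lemma \ref{Lem: commuting lemma} to group identical factors, the surjection \eqref{Eq: surj between nablas} $\ind(\Delta_{(\overline{1}, b_\bullet)} \boxtimes \Delta_{(b_\bullet, b_k)}) \twoheadrightarrow \Delta_{(\overline{1}, b_k)}$ together with part (1) and Frobenius reciprocity (Lemma \ref{Lem: Frobenius reciprocity}) to match characters, and then apply Lemma \ref{Lem: irr of induced module} with the sequence $\mathbf{i} = \kappa_1 \ast \cdots \ast \kappa_m$, where $\kappa_k$ is the dominant character term of $\ind(\Delta_{(b_\bullet, b_k)}^{\boxtimes d_k})$.

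The main obstacle is part (2)(i) for type $\mathsf{B}_n$, where $b_\bullet = 0$ and $L(n^s)$ must be interleaved with the two-dimensional factors $\Delta_{(0, b_k)}$ (here $\Delta_{(0, j)}$ has $\mathbf{i}(0, j) = (n, n-1, \ldots, j)$, which is one-dimensional, but the neighboring indices $n, n-1$ with $a_{n,n-1} = -2$ force use of the cubic Serre relation \eqref{Eq: Serre3}). Generically $L(n^s)$ does not commute with $\Delta_{(0, b_k)}$ past an $n$-arrow, so the full induction is reducible; instead I will apply Lemma \ref{Lem: sum of fi} with $I_{(k)} = \{n, n-1, \ldots, n+1-k\}$, taking $M_1 = L(n^s) \boxtimes \Delta_{(0, b_1)}^{\boxtimes d_1}$ grouped by the smallest index appearing and $M_k$ the remaining factors passing through level $n+1-k$. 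The hypotheses (i)--(iii) of Lemma \ref{Lem: sum of fi} hold because each such block has irreducible head by part (1) (applied to the segment from $\overline{1}$ down) combined with the commutativity of $L(n^s)$ with factors supported strictly below index $n$, giving both the head statement and the multiplicity-one assertion. The exponent $s + t_n$ of $\kf_n$ then arises as the $s$ contributed by $L(n^s)$ plus the total number of $n$-arrows $\sum_j d_j\,\delta(n \succeq b_j)$ appearing in the segments $\Delta_{(0, b_k)}$.
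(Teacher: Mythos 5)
Your part (1) argument breaks down for type $\mathsf{B}_n$. You assert that the non-type-$\mathsf{A}$ edge ``is never triggered'' because the indices appearing in $\ch \Delta_{(\overline{1}, b_k)}$ lie in $\{1,\ldots,l\}$, but for $\mathsf{B}_n$ you have $l = n$ and $b_\bullet = 0$: if some $b_k = 0$, then $\mathbf{i}(\overline{1}, 0) = (1,2,\ldots,n-1,n)$ contains $n$, and the pair $(n-1,n)$ has $a_{n,n-1} = -2$. The KLR relations on these sequences therefore differ from type $\mathsf{A}_n$ (the $\tau_k^2\e(\mathbf{i})$ relation and the quantum Serre relation \eqref{Eq: Serre3} are not the simply-laced ones), so \eqref{Eq: seq of kf for An} does not ``apply verbatim.'' This is precisely why the paper's proof of part (1) singles out $\mathsf{B}_n$: for $\mathsf{C}_n$ ($l = n-1$) and $\mathsf{D}_n$ ($l = n-2$) the spin index genuinely never appears, but for $\mathsf{B}_n$ the paper has to observe that $\Delta_{(\overline{1},b_k)}$ is a cuspidal module, invoke Kleshchev--Ram's \cite[Lem.~6.6]{KR09} to get irreducibility of $N_k = \ind\Delta_{(\overline{1},b_k)}^{\boxtimes d_k}$, and then feed a hand-computed multiplicity $(d_k!)^{l_k}$ into Lemma \ref{Lem: irr of induced module} and Lemma \ref{Lem: commuting lemma}.

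Your proposed route for part (2)(i) also cannot succeed as stated. Every factor $\Delta_{(0, b_k)}$ has $\mathbf{i}(0, b_k) = (n, n-1, \ldots, b_k)$, so $\varepsilon_n(\Delta_{(0, b_k)}) > 0$ for every $k$, and the same is true for $L(n^s)$. Lemma \ref{Lem: sum of fi} requires $\varepsilon_i(M_k) = 0$ for all $i \in I_{(k-1)}$ with a strictly nested chain $I_{(0)} \subsetneq I_{(1)} \subsetneq \cdots$; with your $I_{(1)} = \{n\}$ no block after $M_1$ can contain any $\Delta_{(0,b_k)}$, hence $M_1$ must swallow the whole outer tensor product and the decomposition is vacuous. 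The paper avoids this by \emph{not} applying Lemma \ref{Lem: sum of fi} to the module $\ind(L(n^s)\boxtimes \boxtimes_k\Delta_{(0,b_k)}^{\boxtimes d_k})$ directly: it instead identifies the head of $\ind(L(n^{s+t_n})\boxtimes N)$, where $N = \ind(\Delta_{(n, b_m)}^{\boxtimes d_m}\boxtimes\cdots\boxtimes\Delta_{(n, b_1)}^{\boxtimes d_1})$ is built from segments supported only on $\{1,\ldots,n-1\}$ (so $\varepsilon_n(N)=0$ and Lemma \ref{Lem: sum of fi} applies cleanly), and then constructs a chain of surjections $\ind(L(n^{s+t_n})\boxtimes N) \twoheadrightarrow \ind(L(n^s)\boxtimes\boxtimes_k\Delta_{(0,b_k)}^{\boxtimes d_k})$ using \eqref{Eq: surj between nablas} and Lemma \ref{Lem: commuting lemma}; the simple head and multiplicity-one statement then transfer along this surjection. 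Finally, for part (2)(ii) in type $\mathsf{D}_n$ you give no account of the $2$-dimensional modules $\Delta_{(\overline{n-1}, b_k)}$ with ${n-1}\succeq b_k$ whose characters have two terms, nor of the case $b_{m-1} = n-1$; the paper has to split on exactly this case to pin down the multiplicity of the distinguished sequence before applying Lemma \ref{Lem: irr of induced module}.
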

\begin{proof}
For $k_1, \ldots, k_n \in \Z_{\ge 0}$,
it follows from Lemma \ref{Lem: sum of fi} that
\begin{align}
\kf_1^{k_1} \kf_2^{k_2} \cdots \kf_n^{k_n} \mathbf{1} &\simeq \hd\,\ind \left(L(1^{k_1}) \boxtimes \cdots \boxtimes L(n^{k_n})\right), \label{Eq: simple head 1} \\
\kf_n^{k_n} \kf_{n-1}^{k_{n-1}} \cdots  \kf_{1}^{k_{1}} \mathbf{1} &\simeq \hd\,\ind \left(L(n^{k_n}) \boxtimes \cdots \boxtimes  L(1^{k_1})\right). \label{Eq: simple head 2}
\end{align}

(1) When  $\mathfrak{A}$ is of type $\mathsf{A}_n, \mathsf{C}_n, \mathsf{D}_n$, this can be shown  in the same way as for \cite[Lem.~4.3]{KP10}.

Suppose that $\mathfrak{A}$ is of type $\mathsf{B}_n$. Note that $\Delta_{(\overline{1},b_k)}$ is one of the cuspidal representations given in
\cite[Sec.~6.2]{HMM09} and \cite[Sec.~8.5]{KR09}. By \cite[Lem.~6.6]{KR09}, we have $N_k := \ind \Delta_{(\overline{1},b_k)}^{\boxtimes d_k} $ is irreducible
for $k=1, \ldots, m$. We write $\mathbf{i}(\overline{1}, b_k) = (1,2,\ldots, l_k)$ and let
$$ \mathbf{k}_k = (\underbrace{1,\ldots,1}_{d_k}, \underbrace{2,\ldots,2}_{d_k}, \ldots, \underbrace{l_k,\ldots,l_k}_{d_k} ) $$
for $k=1,\ldots,m$. Then $\mathbf{k}_k$ appears in $\ch N_k$ with
multiplicity $(d_k!)^{l_k}$. Since $b_1 \succ b_2 \succ \cdots \succ
b_m $, the sequence $\mathbf{k} = \mathbf{k}_m * \cdots *
\mathbf{k}_1$ appears in $\ch(\ind (N_m\boxtimes \cdots \boxtimes
N_1)) $ with multiplicity $(d_m!)^{l_m} \cdots (d_1!)^{l_1}$. Lemma
\ref{Lem: irr of induced module} and Lemma \ref{Lem: commuting
lemma}  imply that $\ind \Big( \dbt{k=1}{m} N_k \Big)
\simeq \ind (N_m\boxtimes \cdots \boxtimes N_1)$ is irreducible.

Now for  $\mathbf{t} := (\underbrace{1,\ldots,1}_{t_1},\underbrace{2,\ldots,2}_{t_2},\ldots, \underbrace{n,\ldots,n}_{t_n} )$,
we have $\e(\mathbf{t} )  \left( \ind \Big( \dbt{k=1}{m} N_k \Big) \right) \ne 0$. Since $\ind  \Big( \dbt{k=1}{m} N_k \Big)$ is irreducible,
it follows from Lemma \ref{Lem: Frobenius reciprocity} that there is
a surjective homomorphism
$$ \ind (L(1^{t_1}) \boxtimes \cdots \boxtimes L(n^{t_n}))\twoheadrightarrow
\ind  \Big( \dbt{k=1}{m} N_k \Big). $$
Therefore,  $ \kf_1^{t_1} \cdots \kf_n^{t_n} \mathbf{1} \simeq \ind  \Big( \dbt{k=1}{m} N_k \Big)$ by
\eqref{Eq: simple head 1}.

(2) For this part, note that since $\Delta_{(b_{\bullet}, b_k)}$ is a cuspidal representation as in \cite[Sec.~6]{HMM09} and \cite[Sec.~8]{KR09},  $N_k := \ind \Delta_{(b_{\bullet}, b_k)}^{\boxtimes d_k}$
 is irreducible by \cite[Lem.~6.6]{KR09}.  We now separate considerations according to the type.

(\textbf{Case} $\mathsf{B}_n$)  When  $\mathfrak{A}$ is of type $\mathsf{B}_n$,
let $N = \ind\Big( \Delta_{({n},b_m)}^{\boxtimes d_m} \boxtimes \cdots \boxtimes \Delta_{({n},b_1)}^{\boxtimes d_1}\Big)$.
By \cite[Lem.~4.3]{KP10}, we have $ \kf_{n-1}^{t_{n-1}} \cdots \kf_{1}^{t_{1}} \mathbf{1} \simeq N. $
By $\eqref{Eq: surj between nablas}$ and Lemma \ref{Lem: commuting lemma}, for $d \in \Z_{\ge 0}$ we obtain
\begin{align*}
\ind \left( L(n^d) \boxtimes  \Delta_{({n},b_k)}^{\boxtimes d} \right) & \simeq
\ind \left( L(n^{d-1}) \boxtimes L(n) \boxtimes \Delta_{({n},b_k)} \boxtimes  \Delta_{({n},b_k)}^{\boxtimes d-1}\right) \\
& \twoheadrightarrow \ind\left( L(n^{d-1}) \boxtimes \Delta_{(0,b_k)}  \boxtimes \Delta_{({n},b_k)}^{\boxtimes d-1}\right )
 \simeq \ind \left( L(n^{d-1}) \boxtimes \Delta_{({n},b_k)}^{\boxtimes d-1} \boxtimes \Delta_{(0,b_k)}\right) \\
& \qquad \vdots \\
& \twoheadrightarrow \ind\left( \Delta_{(0,b_k)}^{\boxtimes d}\right).
\end{align*}
Since $t_n = d_1 + \cdots + d_m$,
by the same argument as above, we have the following chain of surjective homomorphisms
\begin{align*}
\ind \left(L(n^{s+t_n}) \boxtimes N \right)  & \twoheadrightarrow \ind \left(L(n^{s+t_n-d_m}) \boxtimes \Delta_{(b_{\bullet},b_m)}^{\boxtimes d_m} \boxtimes \Delta_{({n},b_{m-1})}^{\boxtimes d_{m-1}}
\boxtimes \cdots \boxtimes \Delta_{({n},b_1)}^{\boxtimes d_1} \right) \\
& \ \quad  \simeq \ind \left(L(n^{s+t_n-d_m})  \boxtimes \Delta_{({n},b_{m-1})}^{\boxtimes d_{m-1}}  \boxtimes \cdots \boxtimes \Delta_{({n},b_1)}^{\boxtimes d_1} \boxtimes \Delta_{(b_{\bullet},b_m)}^{\boxtimes d_m} \right) \\
& \twoheadrightarrow \ind \left(L(n^{s+t_n-d_m-d_{m-1}})  \boxtimes \Delta_{({n},b_{m-2})}^{\boxtimes d_{m-2}}  \boxtimes \cdots \boxtimes \Delta_{({n},b_1)}^{\boxtimes d_1} \boxtimes \Delta_{(b_{\bullet},b_{m-1})}^{\boxtimes d_{m-1}} \boxtimes \Delta_{(b_{\bullet},b_m)}^{\boxtimes d_m} \right) \\
& \qquad \qquad \vdots \\
&\twoheadrightarrow \ind \left(L(n^{s}) \boxtimes \Big(\dbt{i=1}{m} \Delta_{(b_{\bullet},b_i)}^{\boxtimes d_i}\Big)\right).
\end{align*}
Therefore, by Lemma \ref{Lem: sum of fi} and \cite[Lem.~3.13]{KL09}, we obtain
$$\kf_{n}^{s+t_{n}} \kf_{n-1}^{t_{n-1}} \cdots \kf_{1}^{t_{1}} \mathbf{1} \simeq \hd\,\ind \left(L(n^{s+t_n}) \boxtimes N\right) \simeq
\hd\,\ind \Big(L(n^{s}) \boxtimes \Big (\dbt{i=1}{m} \Delta_{(b_{\bullet},b_i)}^{\boxtimes d_i}\Big)\Big)$$
and this module has multiplicity one as a composition factor of $\ind \Big(L(n^{s}) \boxtimes \Big(\dbt{i=1}{m} \Delta_{(b_{\bullet},b_i)}^{\boxtimes d_i}\Big)\Big)$.

(\textbf{Case} $\mathsf{C}_n$)   For type $\mathsf{C}_n$,
  we write $\mathbf{i}(b_{\bullet},b_j)=(n, n-1, \ldots,
n - l_j + 1)$ and let
$$ \mathbf{k}_k= (\underbrace{n,\ldots,n}_{d_k}, \underbrace{n-1,\ldots,n-1}_{d_k}, \ldots, \underbrace{n - l_k+ 1,\ldots,n - l_k+ 1}_{d_k}) $$
for $k= 1, \ldots, m$. Then $\mathbf{k}_k$ appears in $\ch N_k$ with multiplicity $(d_k!)^{l_k}$. Since $ b_{\bullet} \succ b_{m} \succ b_{m-1} \succ \cdots \succ b_1$,
the sequence $\mathbf{k} := \mathbf{k}_1 * \cdots * \mathbf{k}_m$ appears in $\ch\,\ind \Big( \dbt{k=1}{m} N_k\Big)$ with multiplicity
$(d_1!)^{l_1} \cdots (d_m!)^{l_m}$. It follows from Lemma \ref{Lem: irr of induced module} and Lemma \ref{Lem: commuting lemma} that
$\ind \Big( \dbt{k=1}{m} N_k\Big)$ is irreducible.

Let $\mathbf{t} = ( \underbrace{n,\ldots,n}_{t_n}, \underbrace{n-1,\ldots,n-1}_{t_{n-1}}, \ldots, \underbrace{1,\ldots,1}_{t_{1}} ).$
Then we have $\e(\mathbf{t}) \Big( \ind \Big( \dbt{k=1}{m} N_k\Big)\Big) \ne 0 $. Since $\ind  \Big( \dbt{k=1}{m} N_k\Big)$ is irreducible,
by Lemma \ref{Lem: Frobenius reciprocity} there exists  a surjective homomorphism
$$ \ind (L(n^{t_n}) \boxtimes \cdots \boxtimes  L(1^{t_1})) \twoheadrightarrow \ind  \Big( \dbt{k=1}{m} N_k\Big). $$
Therefore, it follows from $\eqref{Eq: simple head 2}$ that $ \kf_n^{t_n} \cdots \kf_2^{t_2} \kf_{1}^{t_{1}} \mathbf{1} \simeq  \ind  \Big( \dbt{k=1}{m} N_k\Big). $

(\textbf{Case} $\mathsf{D}_n$)  For $k=1,\ldots,m$,
let $l_k$ be the length of $\mathbf{i}{(\overline{n-1},\overline{n})} * \mathbf{i}{(\overline{n},b_k)}$ if $\overline{n} \succ b_k$,
and $l_k=1$ if $b_k = n$ or $b_k = \overline{n}$. We will show that $\ind \left(\dbt{k=1}{m} \Delta_{(b_{\bullet}, b_k)}^{\boxtimes d_k}\right)$ is irreducible for type $\mathsf{D}_n$.

First suppose that $b_{m-1} \ne {n-1}.$ Let  \begin{align*}
\mathbf{k}_k = \left\{
                  \begin{array}{ll}
                    (\underbrace{n-1,\ldots,n-1}_{d_k}) & \hbox{ if }  b_k = \overline{n}, \\
                    (\underbrace{n,\ldots,n}_{d_k}, \underbrace{n-1,\ldots,n-1}_{d_k}, \ldots, \underbrace{n - l_k + 1,\ldots,n - l_k + 1}_{d_k}) & \hbox{otherwise.}
                  \end{array}
                \right.
\end{align*}
for $ k= 1, \ldots, m$. Then $\mathbf{k}_k$ appears in $\ch N_k$ with multiplicity $(d_k!)^{l_k}$. Since $b_{m} \succ b_{m-1} \succ \cdots \succ b_1$ and $b_{m-1} \ne {n-1}$,
the sequence $\mathbf{k} := \mathbf{k}_1 * \cdots * \mathbf{k}_m$ appears in $\ch\,\ind  \Big( \dbt{k=1}{m} N_k\Big)$ with multiplicity
$(d_1!)^{l_1} \cdots (d_m!)^{l_m}$. It follows from Lemma \ref{Lem: irr of induced module} and Lemma \ref{Lem: commuting lemma} that
$\ind \Big( \dbt{k=1}{m} N_k\Big)$ is irreducible.

We now assume that $b_{m-1} = {n-1}.$ Without loss of generality, we may suppose that $b_{m} = \overline{n}.$ Since
$\ind (L(n-1) \boxtimes L(n)) \simeq \ind (L(n) \boxtimes L(n-1))$, we have
$$\ind \Delta_{(\overline{n-1}, {n-1})}^{\boxtimes d_{m-1}} \boxtimes \Delta_{(\overline{n-1}, \overline{n})}^{\boxtimes d_{m}} \simeq
\ind \Delta_{(\overline{n-1}, \overline{n})}^{\boxtimes d_{m}} \boxtimes \Delta_{(\overline{n-1}, {n-1})}^{\boxtimes d_{m-1}} \simeq \ind \left( L\big(n^{d_{m-1} }\big) \boxtimes L\big((n-1)^{d_{m-1} + d_m}\big) \right) $$
and they are irreducible. Let
$
M_k = \left\{
        \begin{array}{ll}
          \ind \Delta_{(b_{\bullet}, b_k)}^{\boxtimes d_k} & \hbox{ if } k \le m-2, \\
          \ind \Delta_{(b_{\bullet}, b_{m-1})}^{\boxtimes d_{m-1}} \boxtimes \Delta_{(b_{\bullet}, b_{m})}^{\boxtimes d_{m}} & \hbox{ if } k=m-1,
        \end{array}
      \right.
$
and set
\begin{center}
$\mathbf{k}_k = \left\{
                  \begin{array}{ll}
                    (\underbrace{n,\ldots,n}_{d_k}, \underbrace{n-1,\ldots,n-1}_{d_k}, \ldots, \underbrace{n - \ell_k + 1,\ldots,n - \ell_k + 1}_{d_k}) & \hbox{ if }  k \le m-2, \\
                    (\underbrace{n,\ldots,n}_{d_{m-1}},\underbrace{n-1,\ldots,n-1}_{d_{m-1}+d_m}) & \hbox{ if } k = m-1.
                  \end{array}
                \right.$
\end{center}
Then the multiplicity of $\mathbf{k}_k$ in $\ch M_k$ is $(d_k!)^{l_k}$ if $k \le m-2$ and
$d_{m-1}! (d_m + d_{m-1})!$ if $k = m-1$.
Since $b_{\bullet} \succ b_{m} \succ b_{m-1} \succ \cdots \succ b_1$,
the sequence $\mathbf{k} := \mathbf{k}_1 * \cdots * \mathbf{k}_{m-1}$ appears in $\ch\left(\ind (M_1\boxtimes \cdots \boxtimes M_{m-1})\right)$ with multiplicity
$(d_1!)^{l_1} \cdots (d_{m-2}!)^{l_{m-2}} (d_{m-1}!) ((d_m + d_{m-1})!)$. It follows from Lemma \ref{Lem: irr of induced module} and Lemma \ref{Lem: commuting lemma} that
$\ind (M_1\boxtimes \cdots \boxtimes M_{m-1})$ is irreducible.

Now for
$\mathbf{t} = ( \underbrace{n,\ldots,n}_{t_n}, \underbrace{n-1,\ldots,n-1}_{t_{n-1}}, \ldots, \underbrace{1,\ldots,1}_{t_{1}} ),$
we have $\e(\mathbf{t})\left( \ind\Big( \dbt{i=1}{m} \Delta_{(b_{\bullet}, b_i)}^{\boxtimes d_i}\Big)\right) \ne 0$.
Since $\ind \left(\dbt{i=1}{m} \Delta_{(b_{\bullet}, b_i)}^{\boxtimes d_i}\right)$ is irreducible,
 Lemma \ref{Lem: Frobenius reciprocity} gives a surjective homomorphism
$$ \ind (L(n^{t_n}) \boxtimes \cdots \boxtimes  L(1^{t_1})) \twoheadrightarrow \ind\left(\dbt{i=1}{m} \Delta_{(b_{\bullet}, b_i)}^{\boxtimes d_i}\right), $$
which implies the desired conclusion $ \kf_n^{t_n} \cdots \kf_2^{t_2} \kf_{1}^{t_{1}} \mathbf{1} \simeq  \ind\left(\dbt{i=1}{m} \Delta_{(b_{\bullet}, b_i)}^{\boxtimes d_i}\right) $
by $\eqref{Eq: simple head 2}$.
\end{proof}

Recall the definition of $\widehat{\imath} \in \cB$ from $\eqref{Eq: def of overhat i}$. Using the surjective homomorphism in $\eqref{Eq: surj between nablas}$,
we can glue parts (1) and (2) of  Lemma \ref{Lem: seq of kf} together for types $\mathsf{B}_n, \mathsf{C}_n$,
and $\mathsf{D}_n$ to get the following result.

\begin{lemma} \label{Lem: gluing lemma}
Let $t_i \in \Z_{\ge 0}$ be such that
\begin{align*}
& t_1 \ge t_2 \ge \cdots \ge t_{n'-1} \ge t_{n'} \ge  t_{n'+1}=0 \ \  & \qquad (\mathsf{A}_n, \mathsf{C}_n), \\
& 2t_1 \ge 2t_2 \ge \cdots \ge 2t_{n-1} \ge t_{n} \ge 2t_{n+1}\ge \cdots \ge 2t_{n'-1} \ge  t_{n'}=0 \ \  & \qquad (\mathsf{B}_n), \\
& t_1 \ge t_2 \ge \cdots \ge t_{n-2} \ge t_{n-1},t_{n} \ge t_{n+1}\ge \cdots \ge t_{n'-1} \ge  t_{n'}=0 \ \  & \qquad(\mathsf{D}_n),
\end{align*}
where $n'=n\ (\mathsf{A}_n)$, $n' = 2n\ (\mathsf{B}_n)$, $n' = 2n-1\ (\mathsf{C}_n, \mathsf{D}_n)$. Set

$$ \vartheta_{i} = \left\{
                   \begin{array}{ll}
                     t_{i} - t_{i+1} & \hbox{if } i \le n'\ \  (\mathsf{A}_n,\mathsf{C}_n),\ \  i \le n-2\ (\mathsf{B}_n),\ \ i \le n-3\ (\mathsf{D}_n), \\
                     t_{n-1} - \lceil \frac{t_{n}}{2} \rceil & \hbox{if } i=n-1\ \ (\mathsf{B}_n), \\
                     \lceil \frac{t_{n}}{2} \rceil - \lfloor \frac{t_{n}}{2} \rfloor & \hbox{if } i=n\qquad \  (\mathsf{B}_n), \\
                     \lfloor \frac{t_{n}}{2} \rfloor - t_{n+1} & \hbox{if } i=n+1\ \ (\mathsf{B}_n), \\
                      t_{ n-2} - \max\{ t_{n-1}, t_{n} \} & \hbox{if } i=n-2\ \ (\mathsf{D}_n), \\
                      \max\{ 0, t_{n}-t_{n-1} \} & \hbox{if } i=n-1\ \  (\mathsf{D}_n), \\
                      \max\{ 0, t_{n-1}-t_{n} \} & \hbox{if } i=n\qquad \  (\mathsf{D}_n), \\
                      \min\{ t_{ n-1}, t_{n} \} - t_{ n+1} & \hbox{if } i = n+1\  \ (\mathsf{D}_n), \\
                      t_{i-1} - t_{i} & \hbox{if } i \ge n+2\ \ (\mathsf{B}_n, \mathsf{D}_n).
                   \end{array}
                 \right.
 $$
Then
\begin{enumerate}
\item $\hd\,\ind \left( \dbt{i=1}{n'} \Delta_{(\widehat{1}, \widehat{i+1})}^{\boxtimes \vartheta_{i}}\right)$ is irreducible and has
multiplicity one as a composition factor of $\ind \left( \dbt{i=1}{n'} \Delta_{(\widehat{1}, \widehat{i+1})}^{\boxtimes \vartheta_{i}}\right)$,
\item $ \kf_{\mathbf{i}} \mathbf{1} \simeq  \hd\,\ind \left ( \dbt{i=1}{n'} \Delta_{(\widehat{1}, \widehat{i+1})}^{\boxtimes \vartheta_{i}}\right)$,
where
$$\mathbf{i} = \left\{
                 \begin{array}{ll}
                   (\underbrace{1,\ldots,1}_{t_1}, \underbrace{2,\ldots,2}_{t_2}, \ldots, \underbrace{n,\ldots,n}_{t_n} ) & \ \  (\mathsf{A}_n), \\
                   (\underbrace{1,\ldots,1}_{t_1},  \ldots, \underbrace{n-1,\ldots,n-1}_{t_{n-1}},\underbrace{n,\ldots,n}_{t_n},
\underbrace{n-1,\ldots,n-1}_{t_{n+1}}, \ldots, \underbrace{1,\ldots,1}_{t_{2n-1}} ) & \ \  (\mathsf{B}_n, \mathsf{C}_n), \\
                   (\underbrace{1,\ldots,1}_{t_1},  \ldots, \underbrace{n-2,\ldots,n-2}_{t_{n-2}},\underbrace{n,\ldots,n}_{t_{n-1}},
\underbrace{n-1,\ldots,n-1}_{t_{n}}, \ldots, \underbrace{1,\ldots,1}_{t_{2n-2}} ) & \ \ (\mathsf{D}_n).
                 \end{array}
               \right.
$$
\end{enumerate}
\end{lemma}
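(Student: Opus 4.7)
\textit{Plan.} The plan is to combine the two parts of Lemma~\ref{Lem: seq of kf} through the surjection~\eqref{Eq: surj between nablas}. For type $\mathsf{A}_n$ the assertion is immediate from~\eqref{Eq: seq of kf for An}: set $b_k := \overline{k+1}$ and $d_k := \vartheta_k = t_k - t_{k+1}$ for $k = 1,\ldots,n$; the monotonicity hypothesis $t_1 \ge \cdots \ge t_n \ge 0$ guarantees $\vartheta_k \ge 0$, and the identity $t_i = \sum_j d_j\,\delta(\overline{\imath} \succ b_j)$ collapses to a telescoping sum.

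For types $\mathsf{B}_n,\mathsf{C}_n,\mathsf{D}_n$, I will split the sequence $\mathbf{i}$ at the pivot $b_\bullet \in \{0,\overline{n},\overline{n-1}\}$ into an upper Kashiwara monomial $\kf_1^{t_1}\cdots\kf_l^{t_l}\mathbf{1}$, with $l$ as in Lemma~\ref{Lem: seq of kf}(1), and a lower portion running back down. Correspondingly, the tensor product $\dbt{i=1}{n'}\Delta_{(\widehat{1},\widehat{i+1})}^{\boxtimes\vartheta_i}$ splits into an upper block of factors $\Delta_{(\widehat{1},b_k)}$ with $b_k \succeq b_\bullet$ and a lower block with $b_k \prec b_\bullet$. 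Lemma~\ref{Lem: seq of kf}(1), applied with the appropriate $b_k, d_k$, will rewrite the upper monomial as the induced product of the upper block, while Lemma~\ref{Lem: seq of kf}(2) furnishes a companion identity expressing an auxiliary Kashiwara monomial as the head of $\ind\bigl(L(b_\bullet^s)\boxtimes\dbt{k=1}{m}\Delta_{(b_\bullet,b_k)}^{\boxtimes d_k}\bigr)$, with $s$ determined by the parity or branching data at $b_\bullet$.

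To merge the two halves into the single tensor product of $\Delta_{(\widehat{1},\cdot)}$'s required by the statement, I will apply the surjection $\ind(\Delta_{(\widehat{1},b_\bullet)}\boxtimes\Delta_{(b_\bullet,b_k)})\twoheadrightarrow\Delta_{(\widehat{1},b_k)}$ from~\eqref{Eq: surj between nablas} once per lower factor, each application introducing one extra copy of $\Delta_{(\widehat{1},b_\bullet)}$. The commutation isomorphisms of Lemma~\ref{Lem: commuting lemma} will then allow these new copies to be slid across the upper block, where they combine with the $\Delta_{(\widehat{1},b_\bullet)}$'s already present into the prescribed factor $\Delta_{(\widehat{1},b_\bullet)}^{\boxtimes\vartheta_n}$ (or, for type $\mathsf{D}_n$, into the two factors corresponding to $\vartheta_{n-1}$ and $\vartheta_n$). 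The irreducibility of the head and the multiplicity-one claim in (1) will then follow from Lemma~\ref{Lem: sum of fi}, using that $\varepsilon_i(\Delta_{(\widehat{1},\widehat{i+1})}) = 0$ for $i$ outside the root support of the segment, while the Kashiwara identification in (2) comes from part (3) of the same lemma.

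The main technical difficulty will be bookkeeping at the pivot $b_\bullet$. For type $\mathsf{B}_n$ the module $\Delta_{(\overline{1},0)}$ is two-dimensional, which forces the odd/even split visible in $\vartheta_n = \lceil t_n/2\rceil - \lfloor t_n/2\rfloor$ and the shift by $\lfloor t_n/2\rfloor$ appearing in $\vartheta_{n+1}$; for type $\mathsf{D}_n$ the crystal $\cB$ branches at $\overline{n-1}$ into the two incomparable elements $\overline{n}$ and $n$, forcing a case analysis parallel to the subcases treated in the proof of Lemma~\ref{Lem: seq of kf}(2)(ii). In each case I must verify that the multiplicity of the distinguished shuffle term in the character of the induced module agrees before and after gluing, via shuffle arguments analogous to those developed in the proofs of Lemmas~\ref{Lem: irr of induced module} and~\ref{Lem: commuting lemma}.
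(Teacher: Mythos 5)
Your proposal identifies all the right ingredients and follows the same overall architecture as the paper: split the sequence $\mathbf{i}$ at the pivot $b_\bullet$, apply Lemma~\ref{Lem: seq of kf}(1) to the upper monomial and Lemma~\ref{Lem: seq of kf}(2) to the lower one, merge the two halves via iterated applications of the surjection~\eqref{Eq: surj between nablas} and the commuting isomorphisms of Lemma~\ref{Lem: commuting lemma}, and invoke Lemma~\ref{Lem: sum of fi} for irreducibility and multiplicity one. You also correctly diagnose the parity subtlety in type $\mathsf{B}_n$ (the two-dimensionality of $\Delta_{(\overline{1},0)}$ forcing the $\lceil t_n/2\rceil,\lfloor t_n/2\rfloor$ split) and the branching in type $\mathsf{D}_n$. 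In this sense the plan is essentially the paper's proof.

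There is, however, one step that would not work as written. You propose to get irreducibility and multiplicity one ``from Lemma~\ref{Lem: sum of fi}, using that $\varepsilon_i(\Delta_{(\widehat{1},\widehat{i+1})}) = 0$ for $i$ outside the root support of the segment.'' But every factor $\Delta_{(\widehat{1},\widehat{i+1})}$ in the big tensor product is a segment starting at $\overline{1}$, so $\varepsilon_1(\Delta_{(\widehat{1},\widehat{i+1})}) = 1 \ne 0$ for all of them. Since the filtration $I_{(1)} \subsetneq I_{(2)} \subsetneq \cdots$ in Lemma~\ref{Lem: sum of fi} requires $\varepsilon_i(M_k) = 0$ for $i$ in the \emph{earlier} (smaller) index sets, no choice of nested $I_{(k)}$'s satisfies its hypotheses when applied directly to all $n'$ factors $\Delta_{(\widehat{1},\widehat{i+1})}^{\boxtimes\vartheta_i}$. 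What the paper actually does is group the factors into exactly two blocks $M(n)$ and $N(n)$ (the upper and lower halves), observe that $\varepsilon_i(N(n)) = 0$ for $i \ne n$ (the lower block's segments all start at $b_\bullet$, not at $\overline{1}$), apply Lemma~\ref{Lem: sum of fi} to the two-block decomposition to get $\kf_{\mathbf{i}}\mathbf{1} \simeq \hd\,\ind(M(n)\boxtimes N(n))$ with multiplicity one, and only then surject onto $\ind\bigl(\dbt{i=1}{n'}\Delta_{(\widehat{1},\widehat{i+1})}^{\boxtimes\vartheta_i}\bigr)$, which inherits the head and multiplicity-one statement from the source. Relatedly, your phrase that each application of the surjection ``introduces one extra copy of $\Delta_{(\widehat{1},b_\bullet)}$'' runs backwards: the chain of surjections begins at $\ind(M(n)\boxtimes N(n))$, which already carries a surplus of $\Delta_{(\widehat{1},b_\bullet)}$ factors (of size $t_n$, from the telescoping count), and each step consumes one $\Delta_{(\widehat{1},b_\bullet)}$ together with one $\Delta_{(b_\bullet,b_k)}$ to produce $\Delta_{(\widehat{1},b_k)}$; the direction is forced because $\hd$ is a quotient-level notion and must be propagated along surjections from a module whose head you already control.
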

\begin{proof}    By Lemma \ref{Lem: commuting lemma} and $\eqref{Eq: surj between nablas}$, for $k \in \Z_{\ge0}$  we have
\begin{equation} \label{Eq: gluing}
 \begin{aligned}
\ind\left(\Delta_{(\widehat{1},b)}^{\boxtimes k} \boxtimes \Delta_{(b,c)}^{ \boxtimes k}\right) & \simeq
\ind\left(\Delta_{(\widehat{1},b)}^{\boxtimes k-1} \boxtimes \Delta_{(\widehat{1},b)} \boxtimes \Delta_{(b,c)}  \boxtimes  \Delta_{(b,c)}^{ \boxtimes k-1}\right) \\
& \twoheadrightarrow
\ind\left(\Delta_{(\widehat{1},b)}^{\boxtimes k-1} \boxtimes \Delta_{(\widehat{1},c)} \boxtimes  \Delta_{(b,c)}^{ \boxtimes k-1}\right)\\
&\quad \  \simeq
\ind\left( \Delta_{(\widehat{1},b)}^{\boxtimes k-1} \boxtimes   \Delta_{(b,c)}^{ \boxtimes k-1} \boxtimes \Delta_{(\widehat{1},c)}\right)\\
& \hspace{1.28truecm}  \vdots \\
& \twoheadrightarrow  \ind\left( \Delta_{(\widehat{1},c)}^{\boxtimes k}\right),
 \end{aligned}
\end{equation}
where $b \in B \ (\mathsf{A}_n)$, \ $b = \widehat{n+1}\ (\mathsf{B}_n)$, \ $b = \widehat{n}\ (\mathsf{C}_n)$,\ $ b = \widehat{n-1}\ (\mathsf{D}_n)$ and $c \in \cB$ with $b \succ c$.

(\textbf{Case} $\mathsf{A}_n$) The assertion in this case follows from Lemma \ref{Lem: seq of kf}.

(\textbf{Case} $\mathsf{B}_n$)   For type $\mathsf{B}_n$, let
\begin{align*}
M(k) &=  \ind \left( \Big(\dbt{i=1}{2n-k} \Delta_{(\widehat{1}, \widehat{\imath+1})}^{\boxtimes \vartheta_i} \Big) \boxtimes
 \Delta_{(\widehat{1},\widehat{n+1})}^{\boxtimes (\vartheta_{2n} + \cdots + \vartheta_{2n+1-k})} \right),  \ \ \hbox{\rm and} \\
N(k) &= \ind \left( \Delta_{(\widehat{n+1}, \widehat{2n+2-k})}^{\boxtimes \vartheta_{2n+1-k}}
\boxtimes \Delta_{(\widehat{n+1}, \widehat{2n+3-k})}^{\boxtimes \vartheta_{2n+2-k}} \boxtimes \cdots \boxtimes \Delta_{(\widehat{n+1}, \widehat{2n+1})}^{\boxtimes \vartheta_{2n}} \right)
\end{align*}
for $k = 1, \ldots,n$. Note that $ t_n = \lceil \frac{t_{n}}{2} \rceil + \lfloor \frac{t_{n}}{2} \rfloor$,
$\lceil \frac{t_{n}}{2} \rceil = \vartheta_{n} + \cdots \vartheta_{2n} $ and $\lfloor \frac{t_{n}}{2} \rfloor = \vartheta_{n+1} + \cdots +\vartheta_{2n} $.
If follows from Lemma \ref{Lem: seq of kf} that
$$ M(n) \simeq \kf_1^{t_1} \cdots \kf_{n-1}^{t_{n-1}} \kf_{n}^{\lceil \frac{t_{n}}{2} \rceil} \mathbf{1}, \qquad
\hd N(n) \simeq \kf_{n}^{\lfloor \frac{t_{n}}{2} \rfloor } \kf_{n-1}^{t_{n+1}} \cdots \kf_{1}^{t_{2n-1}}   \mathbf{1}. $$
By Lemma \ref{Lem: seq of kf} and \cite[Lem.~3.13]{KL09}, there is a surjective homomorphism
$$ \ind \left( \Delta_{(\overline{1},\overline{n})}^{\boxtimes \lceil \frac{t_{n}}{2} \rceil} \boxtimes L(n^{\lceil \frac{t_{n}}{2} \rceil}) \right) \twoheadrightarrow
\ind \Delta_{(\overline{1},0)}^{\boxtimes \lceil \frac{t_{n}}{2} \rceil}, $$
which yields the following surjective homomorphism
\begin{align*}
& \ind \left(  \Big( \Big (\dbt{i=1}{n-1} \Delta_{(\widehat{1}, \widehat{\imath+1})}^{\boxtimes \vartheta_i}\Big) \boxtimes  \Delta_{(\widehat{1},\widehat{n})}^{\boxtimes \lceil \frac{t_{n}}{2} \rceil} \Big)
\boxtimes \left( L(n^{\lceil \frac{t_{n}}{2} \rceil}) \boxtimes N(n) \right) \right) \\
& \  \twoheadrightarrow
\ind \left(  \Big (\dbt{i=1}{n-1} \Delta_{(\widehat{1}, \widehat{i+1})}^{\boxtimes \vartheta_i}\Big) \boxtimes  \Delta_{(\widehat{1},\widehat{n+1})}^{\boxtimes \lceil \frac{t_{n}}{2} \rceil}
\boxtimes N(n) \right) \\ & \quad  \  \simeq
\ind \left( \Big(\dbt{i=1}{n} \Delta_{(\widehat{1}, \widehat{\imath+1})}^{\boxtimes \vartheta_i}\Big) \boxtimes  \Delta_{(\widehat{1},\widehat{n+1})}^{\boxtimes \lfloor \frac{t_{n}}{2} \rfloor}
\boxtimes N(n) \right) \\  & \quad \  \simeq  \ind (M(n) \boxtimes N(n)).
\end{align*}
It follows from Lemma \ref{Lem: seq of kf} that
\begin{align*}
\kf_1^{t_1} \cdots \kf_{n-1}^{t_{n-1}} \mathbf{1} & \simeq
\ind \left(\Big(\dbt{i=1}{n-1} \Delta_{(\widehat{1}, \widehat{i+1})}^{\boxtimes \vartheta_i}\Big) \boxtimes  \Delta_{(\widehat{1},\widehat{n})}^{\boxtimes \lceil \frac{t_{n}}{2} \rceil} \right), \\
\kf_{n}^{t_{n} } \kf_{n-1}^{t_{n+1}} \cdots \kf_{1}^{t_{2n-1}}   \mathbf{1} & \simeq \hd\,\ind \left( L(n^{\lceil \frac{t_{n}}{2} \rceil}) \boxtimes N(n) \right).
\end{align*}
Since $\varepsilon_i \left( \ind \big(L(n^{\lceil \frac{t_{n}}{2} \rceil}) \boxtimes N(n)\big) \right) =0 $ for $i=1,\ldots, n-1$, by Lemma \ref{Lem: sum of fi}
we have
$$  \kf_{\mathbf{i}} \mathbf{1}  \simeq \hd\,\ind\big(M(n) \boxtimes N(n)\big), $$
and this module has  multiplicity one as a composition factor of $ \ind\big (M(n) \boxtimes N(n)\big)$.

Now  by Lemma \ref{Lem: commuting lemma} and $\eqref{Eq: gluing}$, we have the following chain of surjective homomorphisms
\begin{align*}
& \ind(M(n) \boxtimes N(n)) \\
& \qquad \simeq \ind \left( \Big( \dbt{i=1}{n} \Delta_{(\widehat{1}, \widehat{\imath+1})}^{\boxtimes \vartheta_i} \Big) \boxtimes
 \Delta_{(\widehat{1},\widehat{n+1})}^{\boxtimes (\vartheta_{2n}+\cdots + \vartheta_{n+2})}  \boxtimes
\left( \Delta_{(\widehat{1},\widehat{n+1})}^{\boxtimes \vartheta_{n+1}} \boxtimes \Delta_{(\widehat{n+1}, \widehat{n+2})}^{\boxtimes \vartheta_{n+1}} \right)
\boxtimes N(n-1) \right) \\
& \qquad \twoheadrightarrow \ind \left( \Big( \dbt{i=1}{n} \Delta_{(\widehat{1}, \widehat{\imath+1})}^{\boxtimes \vartheta_i} \Big) \boxtimes
 \Delta_{(\widehat{1},\widehat{n+1})}^{\boxtimes (\vartheta_{2n}+\cdots + \vartheta_{n+2})} \boxtimes
 \Delta_{(\widehat{1},\widehat{n+2})}^{\boxtimes \vartheta_{n+1}}
\boxtimes N(n-1) \right) \\
& \qquad \quad \ \  \simeq \ind (M(n-1) \boxtimes N(n-1)) \\
& \hspace{3truecm}  \vdots \\
& \qquad \twoheadrightarrow \ind (M(1) \boxtimes N(1))   \simeq \ind \left (\Big(\dbt{i=1}{2n-1} \Delta_{(\widehat{1}, \widehat{\imath+1})}^{\boxtimes \vartheta_i} \Big) \boxtimes
\Delta_{(\widehat{1},\widehat{n+1})}^{\boxtimes \vartheta_{2n}} \boxtimes \Delta_{(\widehat{n+1},\widehat{2n+1})}^{\boxtimes \vartheta_{2n}} \right) \\
& \qquad \twoheadrightarrow \ind \left(\dbt{i=1}{2n} \Delta_{(\widehat{1}, \widehat{i+1})}^{\boxtimes \vartheta_i}\right ),
\end{align*}
which completes the proof for  $\mathsf{B}_n$.

(\textbf{Case} $\mathsf{C}_n$)  For  type $\mathsf{C}_n$,
set
\begin{align*}
M(k) &= \ind \left( \Big(\dbt{i=1}{n-1} \Delta_{(\widehat{1}, \widehat{i+1})}^{\boxtimes \vartheta_{i}}\Big) \boxtimes
\Delta_{(\widehat{1}, \widehat{n})}^{\boxtimes (\vartheta_{n} + \cdots + \vartheta_{n-1+k})} \right), \ \ \hbox{\rm and} \\
N(k) &= \ind \left( \Delta_{(\widehat{n}, \widehat{k+n})}^{\boxtimes \vartheta_{k+n-1}} \boxtimes  \Delta_{(\widehat{n}, \widehat{k+n-1})}^{\boxtimes \vartheta_{k+n-2}} \boxtimes
\cdots \boxtimes \Delta_{(\widehat{n}, \widehat{n+1})}^{\boxtimes \vartheta_{n}} \right)
\end{align*}
for $ 1 \le k \le n$. It follows from Lemma \ref{Lem: seq of kf} that
$$  M(n) \simeq \kf_{1}^{t_{1}} \kf_{2}^{t_{2}} \cdots \kf_{n-1}^{t_{n-1}} \mathbf{1} , \quad \
 N(n) \simeq \kf_{n}^{t_{n}} \kf_{n-1}^{t_{n+1}} \cdots \kf_{1}^{t_{2n-1}} \mathbf{1} .$$
Since $\varepsilon_i(N(n))=0$ for $i=1,\ldots, n-1$,  Lemma \ref{Lem: sum of fi} implies
$$ \kf_{\mathbf{i}} \mathbf{1} \simeq \hd\,\ind (M(n) \boxtimes N(n)) $$
and this module has multiplicity one as a composition factor  in  $\ind\big(M(n) \boxtimes N(n)\big)$.

By Lemma \ref{Lem: commuting lemma} and $\eqref{Eq: gluing}$, we have a chain of surjective homomorphisms
\begin{align*}
 \ind \big(M(n) \boxtimes N(n)\big) & \simeq  \ind \left(  M(n-1) \boxtimes \Delta_{(\widehat{1},\widehat{n})}^{\boxtimes \vartheta_{2n-1}} \boxtimes
\Delta_{(\widehat{n},\widehat{2n})}^{\boxtimes \vartheta_{2n-1}} \boxtimes N(n-1) \right) \\
&\twoheadrightarrow  \ind \left( M(n-1) \boxtimes \Delta_{(\widehat{1},\widehat{2n})}^{\boxtimes \vartheta_{2n-1}} \boxtimes N(n-1) \right) \\
&\quad \ \  \simeq \ind \left( M(n-1) \boxtimes N(n-1) \boxtimes \Delta_{(\widehat{1},\widehat{2n})}^{\boxtimes \vartheta_{2n-1}} \right) \\
& \hspace{3truecm} \vdots \\
&\twoheadrightarrow  \ind \left( M(1) \boxtimes N(1) \boxtimes \Delta_{(\widehat{1},\widehat{n+2})}^{\boxtimes \vartheta_{n+1}} \boxtimes \cdots
\boxtimes \Delta_{(\widehat{1},\widehat{2n})}^{\boxtimes \vartheta_{2n-1}} \right)  \allowdisplaybreaks \\
&\twoheadrightarrow  \ind \left(\Big(\dbt{i=1}{n-1} \Delta_{(\widehat{1}, \widehat{\imath+1})}^{\boxtimes \vartheta_{i}}\Big) \boxtimes \Delta_{(\widehat{1},\widehat{n+1})}^{\boxtimes \vartheta_{n}} \boxtimes \cdots
\boxtimes \Delta_{(\widehat{1},\widehat{2n})}^{\boxtimes \vartheta_{2n-1}} \right)  \\
&\quad \ \  \simeq  \ind \left( \dbt{i=1}{2n-1} \Delta_{(\widehat{1}, \widehat{\imath+1})}^{\boxtimes \vartheta_{i}}\right),
\end{align*}
which yields
$ \kf_{\mathbf{i}} \mathbf{1} \simeq  \hd\,\ind \left ( \dbt{i=1}{2n-1} \Delta_{(\widehat{1}, \widehat{\imath+1})}^{\boxtimes \vartheta_{i}}\right). $

(\textbf{Case} $\mathsf{D}_n$)  Without loss of generality, we may assume in the $\mathsf{D}_n$ case that $t_{n-1} \ge t_n$. Note that $\vartheta_{n-1} = 0$
and $ \vartheta_n =  \mathsf{max}\{ t_{n-1}, t_{n} \} - \mathsf{min}\{ t_{n-1}, t_{n} \}$. Let
\begin{align*}
M(k) &= \ind \left( \Big(\dbt{i=1}{n-2} \Delta_{(\widehat{1}, \widehat{i+1})}^{\boxtimes \vartheta_{i}}\Big) \boxtimes
\Delta_{(\widehat{1}, \widehat{n-1})}^{\boxtimes (\vartheta_{n}+ \cdots + \vartheta_{n-1+k} )} \right), \\
N(k) &= \ind \left( \Delta_{(\widehat{n-1}, \widehat{k+n})}^{\boxtimes \vartheta_{k+n-1}} \boxtimes  \Delta_{(\widehat{n-1}, \widehat{k+n-1})}^{\boxtimes \vartheta_{k+n-2}} \boxtimes
\cdots \boxtimes \Delta_{(\widehat{n-1}, \widehat{n+1})}^{\boxtimes \vartheta_{n}} \right)
\end{align*}
for $k = 1, \ldots, n$. It follows from Lemma \ref{Lem: seq of kf} that
$$  M(n) \simeq \kf_{1}^{t_{1}} \kf_{2}^{t_{2}} \cdots \kf_{n-2}^{t_{n-2}} \mathbf{1}, \quad \
 N(n) \simeq \kf_{n}^{t_{n-1}} \kf_{n-1}^{t_{n}} \kf_{n-2}^{t_{n+1}} \cdots \kf_{1}^{t_{2n-2}} \mathbf{1}.$$
Since $\varepsilon_i(N(n))=0$ for $i=1,\ldots, n-2$, Lemma \ref{Lem: sum of fi} gives
$$ \kf_{\mathbf{i}} \mathbf{1} \simeq \hd\,\ind (M(n) \boxtimes N(n)), $$
and this module with multiplicity one in $\ind (M(n) \boxtimes N(n))$.

Since  $\Delta_{(\widehat{1},\widehat{n})}^{\boxtimes \vartheta_{n-1}} = \C $,  Lemma \ref{Lem: commuting lemma} and \eqref{Eq: gluing} again give a string of surjective homomorphisms
\begin{align*}
 \ind (M(n) \boxtimes N(n))
& \simeq  \ind \left( M(n-1) \boxtimes \Delta_{(\widehat{1},\widehat{n-1})}^{\boxtimes \vartheta_{2n-1}} \boxtimes
\Delta_{(\widehat{n-1},\widehat{2n})}^{\boxtimes \vartheta_{2n-1}} \boxtimes N(n-1) \right) \\
&\twoheadrightarrow  \ind \left( M(n-1) \boxtimes \Delta_{(\widehat{1},\widehat{2n})}^{\boxtimes \vartheta_{2n-1}} \boxtimes N(n-1) \right) \\
&\quad \  \simeq \ind \left( M(n-1) \boxtimes N(n-1) \boxtimes \Delta_{(\widehat{1},\widehat{2n})}^{\boxtimes \vartheta_{2n-1}} \right) \\
& \hspace{3truecm} \vdots  \\
&\twoheadrightarrow  \ind \left( M(1) \boxtimes N(1) \boxtimes \Delta_{(\widehat{1},\widehat{n+2})}^{\boxtimes \vartheta_{n+1}} \boxtimes \cdots
\boxtimes \Delta_{(\widehat{1},\widehat{2n})}^{\boxtimes \vartheta_{2n-1}} \right)  \allowdisplaybreaks \\
&\twoheadrightarrow  \ind \left(\Big(\dbt{i=1}{n-2} \Delta_{(\widehat{1}, \widehat{\imath+1})}^{\boxtimes \vartheta_{i}}\Big) \boxtimes \Delta_{(\widehat{1},\widehat{n+1})}^{\boxtimes \vartheta_{n}} \boxtimes \cdots
\boxtimes \Delta_{(\widehat{1},\widehat{2n})}^{\boxtimes \vartheta_{2n-1}} \right)  \\
&\quad \ \simeq \ind \left( \dbt{i=1}{2n-1} \Delta_{(\widehat{1}, \widehat{\imath+1})}^{\boxtimes \vartheta_{i}} \right),
\end{align*}
which yields the desired result
$ \kf_{\mathbf{i}} \mathbf{1} \simeq  \hd\,\ind\left( \dbt{i=1}{2n-1} \Delta_{(\widehat{1}, \widehat{\imath+1})}^{\boxtimes \vartheta_{i}}\right). $
\end{proof}

As an immediate consequence of this lemma we have

\begin{corollary} \  $\mathcal{N}_n(v) = \hd\,\ind \Delta(\mathbf{a}(v);n) \  \text{ for } v \in B(\infty),$
where $\mathcal{N}_n(v)$ is the irreducible module given in Proposition \ref{Prop: description of Nk}.
\end{corollary}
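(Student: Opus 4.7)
The plan is to deduce this statement essentially by unpacking definitions and matching them against Lemma \ref{Lem: gluing lemma}, which was set up precisely for this purpose. By Proposition \ref{Prop: description of Nk}, we have
$$\mathcal{N}_n(v) = \kf_{\mathbf{s}_n}^{\mathbf{a}(v)_n}\mathbf{1},$$
where $\mathbf{s}_n$ is the sequence read off from the bottom row of Table \ref{Tb: w0} and $\mathbf{a}(v)_n = (\mathsf{a}_{n,1},\ldots,\mathsf{a}_{n,l_n})$ is the subsequence of the adapted string $\mathbf{a}(v)$ corresponding to $\mathbf{s}_n$ (respectively, using row $n-1$ in the type $\mathsf{D}_n$ case, by the conventions in Table \ref{Tb: w0}).

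First I would make the bookkeeping identifications explicit. In each of the four classical types, the entries $\mathsf{a}_{n,j}$ agree with the entries $t_{n,p_n+j-1}$ of the top row of $\triangle(\mathbf{a}(v))$ (respectively $t_{n-1,\cdot}$ for type $\mathsf{D}_n$). Reindexing so that $t_i = t_{n,\cdot}$ (respectively $t_{n-1,\cdot}$) runs from $i=1$ up to $n'$ as in Lemma \ref{Lem: gluing lemma}, one checks that the inequalities on $(t_1,\ldots,t_{n'})$ required there are precisely the top-row inequalities furnished by Proposition \ref{Prop: adapted strings}; this is a type-by-type comparison that is immediate from the way Proposition \ref{Prop: S lambda} organizes the adapted string conditions row by row. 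Likewise, the $\vartheta_i$ defined in Lemma \ref{Lem: gluing lemma} match the $\vartheta_{n,j}$ (respectively $\vartheta_{n-1,j}$) from \eqref{Eq: theta} under the same reindexing, so
$$\Delta(\mathbf{a}(v);n) \;=\; \dbt{i=1}{n'}\Delta_{(\widehat{1},\widehat{\imath+1})}^{\boxtimes \vartheta_i}$$
in the notation of Lemma \ref{Lem: gluing lemma}.

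Next I would match the two presentations of the Kashiwara-operator word. The sequence $\mathbf{i}$ appearing in Lemma \ref{Lem: gluing lemma}(2) consists of blocks
$(\underbrace{1,\ldots,1}_{t_1},\underbrace{2,\ldots,2}_{t_2},\ldots)$ (with the appropriate middle portion for types $\mathsf{B}_n,\mathsf{C}_n,\mathsf{D}_n$). Comparing this with $\mathbf{s}_n$ from Table \ref{Tb: w0}, one sees that
$$\kf_{\mathbf{i}}\mathbf{1}\;=\;\kf_{\mathbf{s}_n}^{\mathbf{a}(v)_n}\mathbf{1},$$
since raising each letter of $\mathbf{s}_n$ to the corresponding exponent in $\mathbf{a}(v)_n$ produces exactly the blocked-out word $\mathbf{i}$. (Here the type $\mathsf{D}_n$ permutation of the letters $n-1,n$ matches because $\mathbf{s}_n$ in Table \ref{Tb: w0} starts with $n+1-n,\ldots,n-2,n,n-1,\ldots$.)

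Finally, applying Lemma \ref{Lem: gluing lemma}(2) directly yields
$$\mathcal{N}_n(v) \;=\; \kf_{\mathbf{s}_n}^{\mathbf{a}(v)_n}\mathbf{1} \;=\; \kf_{\mathbf{i}}\mathbf{1} \;\simeq\; \hd\,\ind\!\left(\dbt{i=1}{n'}\Delta_{(\widehat{1},\widehat{\imath+1})}^{\boxtimes \vartheta_i}\right) \;=\; \hd\,\ind\,\Delta(\mathbf{a}(v);n),$$
which is the claim. No additional estimates are needed; the main (and essentially only) obstacle is the clean dictionary between the adapted-string data $(t_{n,\cdot},\vartheta_{n,\cdot})$ of \eqref{Eq: theta} and the parameters $(t_i,\vartheta_i)$ of Lemma \ref{Lem: gluing lemma}, and in the type $\mathsf{D}_n$ case the reduction to the assumption $t_{n-1}\ge t_n$ already handled inside that lemma.
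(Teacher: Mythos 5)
Your proof is correct and follows exactly the paper's intended (and unstated) route: the paper presents this corollary as an immediate consequence of Lemma \ref{Lem: gluing lemma}, and the bookkeeping dictionary you spell out — identifying the parameters $t_i$ and $\vartheta_i$ of that lemma with the top-row data $t_{n,\cdot}$ (resp.\ $t_{n-1,\cdot}$) of $\triangle(\mathbf{a}(v))$ and the $\vartheta_{n,j}$ from \eqref{Eq: theta}, then matching $\kf_{\mathbf{i}}\mathbf{1}$ with $\kf_{\mathbf{s}_n}^{\mathbf{a}(v)_n}\mathbf{1}$ — is precisely the unpacking the authors carry out again in the proof of Theorem \ref{Thm: main theorem}.
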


We now are ready to prove Theorem \ref{Thm: main theorem}.

\begin{proof}[Proof of Theorem \ref{Thm: main theorem}]

Let $ I_{(k)} = \{ n-k+1, n-k+2, \ldots , n  \}$ for $k=1, \ldots, n$. Note that $I_{(k)} \subset I_{(k+1)}$ and $|I_{(k)}|=k$.
Let $U_q(\g_k)$ be the subalgebra of $U_q(\g)$ generated by $e_i,\ f_i\ (i\in I_{(k)})$ and $q^h\ (h \in \mathsf{P}^{\vee})$,
and let $\mathfrak{B}_k$ be the crystal obtained from $B(\infty)$ by forgetting
the $i$-arrows for $i \notin I_{(k)}$.
It follows from Table \ref{Tb: w0} that $U_q(\g_k)$ is of type $\mathsf{X}_{k}$ when $U_q(\g)$ is of type $\mathsf{X}_n$ ($\mathsf{X = A,B,C,D}$).
Recall the definition $\mathbf{a}_i$ for $\mathbf{a} \in \mathcal{S}$ and the sequences $\mathbf{s}_k$ given in Section \ref{Sec: Description Nk}.
Take $v \in B(\infty)$ and let $\triangle(\mathbf{a}(v)) = \{ t_{ij} \}$.
If $i=1, \ldots, n$ for type $\mathsf{A}_n, \mathsf{B}_n, \mathsf{C}_n$, then
$$ \mathbf{a}(v)_i = (t_{i,n+1-i}, t_{i,n+2-i}, \ldots, t_{i,d_i}), $$
where $d_i = n\ \hbox{\rm for all} \ i \ \ (\mathsf{A}_n)$ and $d_i = n-1+i\ (\mathsf{B}_n, \mathsf{C}_n)$. If $i=1, \ldots, n$ for type $\mathsf{D}_n$, then
$$ \mathbf{a}(v)_i = \left\{
                      \begin{array}{ll}
                        (t_{1,n-1}) & \hbox{ if } i = 1, \\
                        (t_{1,n}) & \hbox{ if } i = 2, \\
                        (t_{i-1, n+1-i}, t_{i-1, n+2-i}, \ldots, t_{i-1, n-2+i}) & \hbox{ if } i = 3, \ldots, n.
                      \end{array}
                    \right.
$$
Let $M_k = \ind \Delta(\mathbf{a}(v);k)$ for $k=1,\ldots, n$. Then,
by Proposition \ref{Prop: description of Nk}, Lemma \ref{Lem: gluing lemma}, and the choice of $I_{(k)}$, we obtain
$$ \mathcal{N}_k(v)  = \hd M_k \ \ \ \text{ for } k=1,\ldots, n.$$
By the construction of $\Delta(\mathbf{a}(v);k)$ and Lemma \ref{Lem:
gluing lemma},  we know
\begin{enumerate}
\item[(i)] $\varepsilon_i( M_k ) = 0 \ \ \text{for } i\in I_{(k-1)}$,
\item[(ii)] $\hd M_k$ is irreducible and occurs with multiplicity one as a composition factor of $ M_k $.
\end{enumerate}
Therefore, by Lemma \ref{Lem: sum of fi} and Proposition \ref{Prop: crystal iso},
\begin{align*}
\Phi(v) & = \hd\,\ind\left( (\hd M_1) \boxtimes (\hd M_2) \boxtimes \cdots \boxtimes (\hd M_{n})\right) \\
&= \hd\,\ind (  M_1 \boxtimes  M_2 \boxtimes \cdots \boxtimes M_{n}) \\
&= \hd\,\ind \big (  \Delta(\mathbf{a}(v);1) \boxtimes \cdots \boxtimes \Delta(\mathbf{a}(v);n) \big),
\end{align*}
which completes the proofs of (1) and (2).

Let $\Psi^\lambda: B(\lambda) \rightarrow \B(\lambda)$ be the canonical crystal isomorphism given by
$\Psi^{\lambda}(\kf_{\mathbf{i}}  b_\lambda) = \kf_{\mathbf{i}} \mathbf{1}$
for any sequence $\mathbf{i}$ of elements in $I$. Then the following diagram commutes:
$$ \CrystalDiagram $$
Here, $\imath_\lambda$ (resp.\ $\jmath_\lambda$) is the crystal embedding $\eqref{Eq: crystal embedding}$
from $B(\lambda)$ (resp.\ $\B(\lambda)$) to $B(\infty)\otimes T_\lambda \otimes C$ (resp.\ $ \B(\infty)\otimes T_\lambda \otimes C$).
Assume for $v\in B(\lambda)$ that  $\iota_{\lambda}(v) = v' \otimes t_\lambda \otimes c$ for some $ v' \in B(\infty)$,
and recall that  $\mathbf{a}(v) = \mathbf{a}(v')$.
Let $\jmath_\lambda^{-1}$ be the inverse from $\im(\jmath_\lambda)$ to $\B(\lambda)$. Then
$$ \Psi^\lambda = \jmath^{-1}_\lambda \circ (\Psi \otimes \id \otimes \id) \circ \imath_\lambda, $$
which yields
\begin{align*}
\Psi^\lambda(v) &= \jmath^{-1}_\lambda \circ (\Psi \otimes \id \otimes \id) \circ \imath_\lambda(v) \\
& = \jmath^{-1}_\lambda \circ (\Psi \otimes \id \otimes \id) (v' \otimes t_\lambda \otimes c) \\
&= \jmath^{-1}_\lambda ( \hd\,\ind (\Delta(\mathbf{a}(v');1) \boxtimes \cdots \boxtimes \Delta(\mathbf{a}(v');n)) \otimes t_\lambda \otimes c) \\
&= \jmath^{-1}_\lambda ( \hd\,\ind (\Delta(\mathbf{a}(v);1) \boxtimes \cdots \boxtimes \Delta(\mathbf{a}(v);n)) \otimes t_\lambda \otimes c) \\
&= \hd\,\ind (\Delta(\mathbf{a}(v);1) \boxtimes \cdots \boxtimes \Delta(\mathbf{a}(v);n)).
\end{align*}  This proves assertion (3) and concludes the proof of the main theorem (Theorem \ref{Thm: main theorem}).
\end{proof}

\vskip 1em

\begin{center} {\textbf{Acknowledgments}}   \end{center}
Work on this paper was facilitated by a visit  by S.-J.~Kang, S.-j.~Oh, and E.~Park  to
the University of Wisconsin-Madison and by a visit by  G.~Benkart to
the Korea Institute for Advanced Study.   We express our gratitude to these
institutions for their hospitality.

\bibliographystyle{amsplain}



\end{document}